\title{Degeneracy of the Characteristic Variety}
\author{Abraham D. Smith}
\address{Mathematics Department\\Fordham University\\Bronx, NY 10458-5165}
\email{adsmith@member.ams.org}
\date{October 25, 2014}
\thanks{Thanks to Robert Bryant for a useful conversation at the Fields
Institute in December 2013, in which he suggested that a correct formulation
``should be similar to the appearance of generalized eigenvectors.''  This
suspicion is confirmed by the Main Theorems.  Thanks also 
to Armand Brumer, Ian Morrison, David Swinarski, and Han-Bom Moon for useful
discussions of algebraic geometry. The bulk of the results were proven in
February and March 2014, with exposition developed in June and August 2014.}
\keywords{characteristic variety, Guillemin normal form, eikonal system}
\subjclass[2010]{Primary 58A15, Secondary 35A27}
\tikzset{>=stealth}
\numberwithin{equation}{section}                                 
\newtheorem{mthm}[equation]{Main Theorem}
\newtheorem{thm}[equation]{Theorem}
\newtheorem{conj}[equation]{Conjecture}
\newtheorem{lemma}[equation]{Lemma}
\newtheorem{cor}[equation]{Corollary}
\newtheorem*{cor*}{Corollary}
\theoremstyle{definition}
\newtheorem{defn}[equation]{Definition}
\theoremstyle{remark}
\newtheorem{rmk}[equation]{Remark}
\newcommand{\lhk}{\mathbin{\hbox{\vrule height1.4pt width4pt depth-1pt
\vrule height4pt width0.4pt depth-1pt}}} 
\newcommand{\pair}[1]{\ensuremath{\left\langle #1 \right\rangle}}
\DeclareMathOperator{\Gr}{Gr}
\DeclareMathOperator{\Var}{Var}
\DeclareMathOperator{\C}{\mathcal{C}}
\DeclareMathOperator{\Au}{\mathbf{A}}
\DeclareMathOperator{\Wu}{\mathbf{W}}
\DeclareMathOperator{\End}{End}
\DeclareMathOperator{\codim}{codim}
\DeclareMathOperator{\sat}{sat}
\DeclareMathOperator{\elem}{elem}
\begin{document}
\begin{abstract}
The characteristic variety plays an important role in the analysis of the
solution space of partial differential equations and exterior differential
systems.  This article studies the linear span of this variety, measuring its
dimension via an integrable extension of the original system.  In the PDE case with locally constant characteristic variety,
this extension yields a recursive version of Guillemin normal form, inducing a
sequence of foliations on integral manifolds.
\end{abstract}
\maketitle  \tableofcontents  \newpage

\section{Context}\label{context}
This article investigates the \emph{linear span} of the characteristic variety
of an involutive exterior differential system using established tools of the
discipline, such as eikonal systems, Guillemin normal
form, and integrable extensions.  In particular, we pose the question ``what does degeneracy of the characteristic
variety tell us about solutions of the exterior differential system?'' Despite the
increasingly sophisticated application of commutative algebra to the subject, this simple question has apparently been neglected in the body of late
20th-century work on exterior differential systems.\footnote{
The most important studies of the characteristic variety are 
\cite{Guillemin1968}, 
\cite{Guillemin1970}, \cite{Gabber1981}, and \cite{Malgrangea}, none of which consider degeneracy.
The most thorough single overview is Chapter V of
the book \cite{BCGGG}; however, this chapter's Theorem 3.13 incorrectly equates
$S^\perp$ and $\pair{\Xi}$.
This article arose from an attempt to state and prove a correct
version of that theorem.  The inclusion of that incorrect theorem appears to be
a random error of the drafting and editing process: the theorem is not used
elsewhere in the book, no justification is provided, and a counterexample
appears in the example on Page 276 (Page 235 in the online version).  However,
the incorrect theorem is foreshadowed in a non-technical comment at the bottom
of Page 184 (the middle of Page 159 in the online version).  Based on
conversations with the living authors in 2013, it appears that the error had
gone unreported by other readers.  That a wholly incorrect statement persisted
for so long in a standard reference is strong evidence that the characteristic
variety deserves more careful study.}
Projective varieties are studied over $\mathbb{C}$, and the main theorem can be
put in a weak form as
\begin{cor*}
Suppose an involutive differential ideal $\mathcal{I}$ on a manifold $M$ has
maximal integral elements of projective dimension $n{-}1$, complex
characteristic variety $\Xi$ of projective dimension $\ell{-}1$, and projective
Cauchy system $S$ of dimension $n{-}\nu{-}1$.  Let the complex
linear space $\pair{\Xi}$ have projective dimension $L{-}1$. Then $0 \leq \ell
\leq L \leq \nu \leq n$ and 
\begin{enumerate}
\item $0 =\ell$ if and only if $\mathcal{I}$ is Frobenius;
\item $L=\nu$ if and only if the Guillemin symbol algebras, which are
parametrized by $\Xi$, contain no common nilpotent subalgebra (see Main Theorem~\ref{mainthmS});
\item $\nu=n$ if and only if $(M,\mathcal{I})$ is free of Cauchy retractions;
\item Every ordinary integral manifold is foliated by submanifolds of projective
dimension $n{-}L{-}1$ defined by $\pair{\Xi}=0$ (see Main
Theorem~\ref{foliation}).
\end{enumerate}
The case $L=\nu=n$ shall be called
``elementary,'' which corresponds to $\Xi$ being a non-degenerate variety (see
Main  Theorem~\ref{mainthm0}). 
\label{introcor}
\end{cor*}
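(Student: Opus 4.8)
The plan is to assemble the statement from the three Main Theorems together with two classical facts about involutive tableaux, so that the proof is mostly a matter of tracking dimensions. First I would dispose of the outer inequalities. That $0\le\ell$ and $\nu\le n$ is immediate: a projective variety, possibly empty, has dimension at least $-1$, and the Cauchy retracting space sits inside the $n$-dimensional maximal integral element. The inclusion $\ell\le L$ is the elementary fact that a variety lies in its linear span and the span has no smaller dimension. The one substantive inequality here is $L\le\nu$, which I would get from the one-sided inclusion $\pair{\Xi}\subseteq S^{\perp}$: every characteristic covector annihilates the Cauchy system, since each Cauchy characteristic direction is a degeneracy direction of the symbol and is therefore killed by every characteristic covector. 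As $S$ has projective dimension $n-\nu-1$, its annihilator has projective dimension $\nu-1$, giving $L\le\nu$. This is classical --- it is precisely the nontrivial half of the statement misremembered as Theorem~3.13 in \cite{BCGGG}, the whole point being that $\nu-L$ can be positive.

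Next, items (i) and (iii) are bookkeeping once the vocabulary is fixed. For (iii): $\nu=n$ says the retracting space exhausts the integral element, i.e.\ there are no nonzero Cauchy characteristic vector fields, which is exactly what ``free of Cauchy retractions'' means; one only checks that a system with trivial Cauchy system has trivial iterated Cauchy system, so the definition does not secretly demand more. For (i): $\ell=0$ says $\Xi=\varnothing$. For an involutive system $\dim\Xi=q-1$, where $q$ is the largest index with nonvanishing Cartan character $s_q$ (and $q=0$ if all the higher characters vanish); hence $\Xi=\varnothing$ iff $s_1=\dots=s_n=0$ iff the tableau vanishes iff $\mathcal{I}$ is Frobenius. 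I would cite the Cartan--K\"ahler structure theory of involutive tableaux (Chapter~V of \cite{BCGGG}) for the last of these equivalences.

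The remaining items are the genuine content, and I would deduce them from the Main Theorems. Item (ii) is Main Theorem~\ref{mainthmS}: that result should identify the defect $\nu-L=\dim S^{\perp}-\dim\pair{\Xi}$ with an invariant of the family of Guillemin symbol algebras carried by the smooth locus of $\Xi$ --- the dimension of their largest common nilpotent subalgebra, the ``generalized eigenvector'' contribution anticipated by Bryant --- so that $L=\nu$ exactly when that common nilpotent part is trivial. Item (iv) is Main Theorem~\ref{foliation}: the rank-$L$ Pfaffian system spanned by $\pair{\Xi}$ restricts to any ordinary integral manifold of $\mathcal{I}$ and there cuts out a distribution which is shown to be Frobenius-integrable; its leaves, of affine dimension $n-L$ and hence projective dimension $n-L-1$, give the claimed foliation. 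Finally, the ``elementary'' case $L=\nu=n$ follows by conjunction of (ii) and (iii) and is the assertion that $\Xi$ spans the whole projectivized integral element --- a non-degenerate variety --- which is Main Theorem~\ref{mainthm0}.

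I expect the real obstacle to be everything packed into Main Theorem~\ref{mainthmS}, i.e.\ making precise and proving the nilpotent-subalgebra criterion behind (ii): one must pin down the correct definition of the Guillemin symbol algebra at a smooth point of $\Xi$, organize these algebras into a coherent family as the point varies, and show --- via the recursive Guillemin normal form developed in the paper --- that the failure of $\pair{\Xi}$ to exhaust $S^{\perp}$ is governed, both fiberwise and globally, by a single common nilpotent subalgebra. Once that is in hand, the corollary above is just the reconciliation of the resulting dimension count with the elementary inclusions $\ell\le L\le\nu\le n$.
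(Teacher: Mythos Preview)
Your proposal is correct and matches the paper's own structure: the Corollary is explicitly presented as a summary of Main Theorems~\ref{mainthm0}, \ref{mainthmS}, and \ref{foliation}, together with the easy inclusion $\pair{\Xi}\subset S^\perp$ (asserted in Section~\ref{notation}) and the standard equivalence of $\ell=0$ with Frobenius via the Cartan characters. One slip of phrasing in (iii): you write that $\nu=n$ means ``the retracting space exhausts the integral element,'' but it means the opposite --- $S$ has projective dimension $n-\nu-1=-1$, hence is empty --- though your immediate restatement (``no nonzero Cauchy characteristic vector fields'') is the correct one.
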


A stronger and more precise statement of the results requires significant
conceptual ballast, and Section~\ref{notation} rapidly conveys
notations and definitions for various objects associated with an
exterior differential system.
The terminology here is meant to be familiar and reasonably consistent with
\cite{BCGGG}, diverging only when necessary for a clearer formulation of
results. 
Experts fluent in this language should jump to the Main Theorems in
Section~\ref{mainthms} now.

\section{Notation}
\label{notation}
An \emph{exterior differential system} $(M,\mathcal{I})$ consists of a smooth
manifold $M$ of finite dimension $m$ and an ideal $\mathcal{I}$ in the total
exterior algebra $\Omega^\bullet(M)$ such that $\mathrm{d}\mathcal{I} \subset
\mathcal{I}$ and such that in each degree $p$, the $p$-forms in the ideal,
$\mathcal{I}_{p}=\mathcal{I}\cap \Omega^p(M)$, form a finitely generated
$C^\infty(M)$-module.  For convenience, we assume that $\mathcal{I}_0=0$.
Optionally, we sometimes specify an independence condition as an $n$-form
$\boldsymbol{\omega} \in \Omega^n(M)$ that is not allowed to vanish on solutions.
The category of exterior differential systems includes all smooth systems
of differential equations expressed in local coordinates in jet space.

An \emph{integral element} of $\mathcal{I}$ at $x \in M$ is a linear subspace
$e \subset T_xM$ such that $\varphi|_e = 0$ for all $\varphi \in \mathcal{I}$.
The space of $n$-dimensional integral elements is labeled
$\Var_n(\mathcal{I}) \subset \Gr_n(TM)$.  There is a maximal $n$ for which
$\Var_n(\mathcal{I})$ is locally non-empty, which is the case of interest. 
If an independence condition $\boldsymbol{\omega}$ is specified, we also require $\boldsymbol{\omega}|_e
\neq 0$.

There is an open, dense subset $\Var_n^o(\mathcal{I})\subset \Var_n(\mathcal{I})$ defined as the smooth subbundle of $\Gr_n(TM)$ that is cut out by smooth functions.  These are the
\emph{K\"ahler-ordinary} elements.  A single connected component of
$\Var_n^o(\mathcal{I})$ is called $M^{(1)}$ after $M$ is redefined
to be the open set over which $M^{(1)}$ is a smooth bundle.
Let $s$ denote the dimension of each fiber of the projection $M^{(1)} \to M$,
so $t=n(m-n)-s$ is the corresponding codimension of $T_eM^{(1)}_x$ in
$T_e\Gr_n(T_xM)$.  Such a space $M^{(1)}$ is called the \emph{(ordinary)
prolongation} of $M$, and it admits a prolonged ideal $\mathcal{I}^{(1)}$
generated adding the pullback of $\mathcal{I}$ to the tautological contact system
$\mathcal{J}$ on $\Gr_n(TM)$.

An \emph{integral manifold} of $\mathcal{I}$ is an immersion $\iota:N \to M$
such that $\iota^*(\varphi)=0$ for all $\varphi \in \mathcal{I}$.  If an
independence condition $\boldsymbol{\omega}$ is specified, we require that $\iota^*(\boldsymbol{\omega})
\neq 0$.  That is, a maximal integral manifold is a submanifold all of
whose tangent spaces are maximal integral elements, so $\iota_*(TN) \subset
\Var_n(\mathcal{I})$.  A maximal integral manifold is called
\emph{ordinary} if $\iota_*(TN) \subset M^{(1)}$, in which case the immersion $\iota^{(1)}:N \to
M^{(1)}$ defined by $\iota^{(1)}:y \mapsto \iota_*(T_yN) \in M^{(1)}_{\iota(y)}$ is
called the \emph{prolongation} of $\iota:N\to M$.  The prolonged integral
manifold $\iota^{(1)}:N \to M^{(1)}$ is an integral manifold of the prolonged
system $(M^{(1)}, \mathcal{I}^{(1)})$.
The overall goal is to construct all ordinary integral manifolds of
$(M,\mathcal{I})$ through the careful study of the prolongation $M^{(1)}$. 

Given an integral element $e' \in \Var_{n-1}(TM)$, we consider its
space of integral extensions, called the \emph{polar space}, \[ H(e') = \{ v
~:~
e=e' + \pair{v} \in \Var_{n}(\mathcal{I}) \}\subset TM \] and the
\emph{polar equations} comprise its annihilator,  \[ H^\perp (e') =
\{ e'\lhk \varphi~:~ \varphi \in \mathcal{I}_n  \} \subset T^*M. \] Let
$r(e') = \dim H(e') - \dim e' -1$,
called the \emph{polar rank}, so $\codim H^\perp(e') = n+r$.
Note that $r(e')=-1$ means that $e'$ admits no extensions, and
$r(e')=0$ means that $e'$ admits a unique extension.  Suppose that $e \in
M^{(1)}$, so that $r(e')=0$ for an open set of $e' \in \Gr_{n-1}(e)$.
(It cannot be positive on an open set, for then the dimension $n$ would not be
maximal.)  As the rank of a system of linear equations, $r:\mathbb{P}e^* \to
\mathbb{N}$ is lower semi-continuous on $M^{(1)}$, but it can increase on a Zariski-closed set:
\begin{defn}
For any $e \in M^{(1)}$, the \emph{characteristic variety
of $e$} is 
\begin{equation} 
\Xi_e = \{ \xi \in \mathbb{P}e^*\otimes \mathbb{C} : r(\xi^\perp) > 0 \} \subset
\mathbb{P}e^*\otimes \mathbb{C}.
\end{equation}
\end{defn}
(Throughout, we work with complex varieties unless otherwise noted.) As a projective variety, let $\dim \Xi_e =\ell-1$ and $\deg \Xi_e = s_\ell$;
both are locally constant on $M^{(1)}$.  When $(M,\mathcal{I})$ is involutive (which has many equivalent
definitions; see \cite{BCGGG}), $\ell$ is the Cartan integer and
$s_\ell$ is the last non-zero Cartan character.  If $(M,\mathcal{I})$ is 
analytic and involutive, then the Cartan--K\"ahler theorem guarantees integral
manifolds parameterized by $s_\ell$ functions of $\ell$ variables.

To study $\Xi_e$ simultaneously for all $e \in M^{(1)}$ in an invariant manner,
recall that the Grassmannian space $\Gr_n(TM)$ admits a canonical projective bundle
$\boldsymbol{\gamma}$, which has fiber $\boldsymbol{\gamma}_e = \mathbb{P}e\otimes \mathbb{C}$, and a
canonical dual bundle $\boldsymbol{\gamma}^*$, which has fiber $\boldsymbol{\gamma}^*_e =
\mathbb{P}e^*\otimes \mathbb{C}$.  Since $M^{(1)}$ is a submanifold of
$\Gr_n(TM)$, it admits restricted bundles $V=\boldsymbol{\gamma}|_{M^{(1)}}$ and
$V^*=\boldsymbol{\gamma}^*|_{M^{(1)}}$ with fibers 
\begin{equation}
\begin{split}
V_e &= \mathbb{P}e\otimes \mathbb{C},\text{ and}\\
V^*_e &=\mathbb{P}e^*\otimes \mathbb{C}
\end{split}
\end{equation}
respectively.  
Bases of $V_e^*$ are useful, so let $\mathcal{F}$ denote the right principal $PGL(n)$ bundle over $M^{(1)}$
whose fiber over $e \in M^{(1)}$ is $\mathcal{F}_e = \{ u: V_e
\overset{\sim}{\to} \mathbb{P}^{n-1}\}$.  That is, a basis
$u^1,\ldots,u^n$ of $V^*_e$ is an element $u$ of $\mathcal{F}_e$. 
Note that, for any integral manifold $\iota:N \to M$ of $\mathcal{I}$ with prolongation
$\iota^{(1)}:N \to M^{(1)}$, the pullback bundle $\iota^{(1)*}\mathcal{F} = \{ u \circ
\iota^{(1)}\}$ is the usual (complexified and projectivized) coframe bundle
$\mathcal{F}_N$ over $N$.  

With these canonical bundles in place, $\Xi$ is a global object over
$M^{(1)}$ when considered as a subvariety of $V^*$.  More precisely, define the
\emph{characteristic sheaf of $\mathcal{I}$}, denoted $\mathcal{M}$, as the
sheaf over $V^*$ defined by the homogeneous condition 
that the linear system $H^\perp(\xi^\perp)$ has submaximal rank at $\xi \in
V^*_e$.
The characteristic variety is the support of $\mathcal{M}$.

Another way to see $\Xi_e$ is to view $T_eM^{(1)}_x$ as a
subspace of $e^\perp \otimes e^*$, which is canonically identified with
$T_e\Gr_n(T_x M)$.  Specifically, let 
\begin{equation}
\begin{split}
W_e &= \mathbb{P}e^\perp \otimes \mathbb{C},\text{ and}\\
A_e &= \mathbb{P}T_eM^{(1)}_x \otimes \mathbb{C}.
\end{split}
\end{equation}
The space $A_e$ is called the (complexified and projectivized)
 \emph{tableau} of $\mathcal{I}$,
and it is defined as the kernel of a linear map
$\sigma$, called the \emph{symbol}:
\begin{equation}
\emptyset \to A_e \to W_e \otimes V^*_e \overset{\sigma} \to A_e^\perp \to
\emptyset.
\label{symtab}
\end{equation}
For any hyperplane $\xi^\perp \subset e$, the condition $r(\xi^\perp) > 0$ is equivalent to the condition $\ker \sigma_\xi \neq 
\emptyset$, where $\sigma_\xi:W_e \to A_e^\perp$ is the restricted symbol map
$\sigma_\xi:z \mapsto \sigma(z \otimes \xi)$.  So, the projective variety of
rank-one elements of the tableau, 
$\C_e = \{ z \otimes \xi \in W_e
\otimes V^*_e : \sigma_\xi(z)=0 \}$, is the incidence correspondence for $\ker\sigma$
over $\Xi_e$, as in Figure~\ref{incidencefig}.

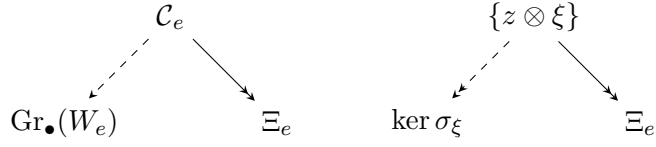
\begin{figure}
\centering\begin{tikzpicture}[node distance=2cm, auto]                          
  \node (C) {$\C_e$};                                                             
  \node[below left of=C] (G) {$\Gr_\bullet(W_e)$};                                
  \node[below right of=C] (Xi) {$\Xi_e$};                                         
  \draw[->>, above right] (C) to node {} (Xi);                                  
  \draw[->, dashed, above left] (C) to node {} (G);                            
  \node[right of=Xi] (G1) {$\ker \sigma_\xi$};                          
  \node[above right of=G1] (C1) {$\{ z \otimes \xi \}$};
  \node[below right of=C1] (Xi1) {$\Xi_e$};                                         
  \draw[->>, above right] (C1) to node {} (Xi1);                                  
  \draw[->>, dashed, above left] (C1) to node {} (G1);                            
\end{tikzpicture}                                                               
\caption{The rank-one variety $\C$ as the incidence correspondence for the
characteristic variety $\Xi$. See \cite{Smith2014a}.}\label{incidencefig}
\end{figure}

\begin{defn}
The \emph{Cauchy retractions}\footnote{
These are typically called Cauchy characteristics, but because this article focuses on the relation between the characteristics $\xi \in \Xi$ and the retractions-n\'ee-characteristics $v \in S$, we hope to avoid confusion through this name change.}  
of $\mathcal{I}$ comprise the subspace $\mathfrak{g} = \{ v \in TM : v \lhk \mathcal{I} \subset \mathcal{I} 
\} \subset TM$.  The ideal generated by $\mathfrak{g}^\perp$ is the
\emph{smallest} Frobenius ideal containing the algebraic generators of
$\mathcal{I}$. (See \cite{Gardner1967} and Section 6.4 of \cite{Ivey2003}.)
Let 
$S_e = \mathbb{P}(e \cap \mathfrak{g})\otimes \mathbb{C} \subset V_e$.
Let $\nu{-}1$ denote the projective rank of the annihilator subbundle $S^\perp
\subset V^*$.
\end{defn}

Let $\pair{\Xi}$ denote the linear subbundle of $V^*$ whose fiber
$\pair{\Xi}_e$ is the span of $\Xi_e$.  Let $L{-}1=\dim\pair{\Xi}_e$.  It is easy to verify that $\pair{\Xi} \subset S^\perp$.
Permanently reserve the following index ranges, where $1 \leq \ell \leq L \leq \nu \leq n
\leq m$:
\begin{equation}
\begin{split}
\lambda,\mu &= 1,\ldots,\ell\\
\varrho,\varsigma &= \phantom{1,\ldots,\ell,}\ell{+1},\ldots\phantom{\ldots,L,L{+}1}\ldots,n\\
i,j &= 1,\ldots\phantom{\ell,\ell{+1},,\ldots}\ldots,L\\
\alpha,\beta &=\phantom{1,\ldots,\ell,\ell{+1},\ldots\ldots,L}L{+}1,\ldots,n\\
k,l &= 1,\ldots\phantom{,\ell,\ell{+1},\ldots\ldots,L,L{+}1}\ldots,n\\
a,b &= \phantom{1,\ldots,\ell,\ell{+1},\ldots\ldots,L,L{+}1,\ldots,n}n{+}1,\ldots,m
\end{split}
\label{resindex}
\end{equation}

If $(u^k)$ is a basis of $V^*_e$ with dual basis $(u_k)$ for $V_e$ and if $(w_a)$ is a basis of $W_e$, then an
element $\pi \in W_e \otimes V^*_e$ may be written as a matrix $\pi = \pi^a_k
(w_a \otimes u^k)$, and the symbol relations $0=\sigma$ defining $A_e$ may be
written as a system of $t$ equations $\{ 0=\sigma^{\tau}(\pi^a_k), \tau=1,\ldots,t\}$.  
For a dense, open subset of these bases, all $s$ generators of
the subspace $A_e$ appear in the matrix $\pi$ according to the Cartan characters, in the first $s_1$ entries of column $1$, the first $s_2$ entries
of column $2$, and so on up to the first $s_\ell$ entries of column $\ell$.  Set 
$s_\varrho=0$ for $\varrho>\ell$.  A basis $(u^k)$ of $V^*_e$ is called
\emph{generic} if the sequence $(s_1, s_2, \ldots,
s_n)$ is lexicographically maximized. A stronger condition is ``$u^k \not\in \Xi_e$ for
all $k$,'' in which case the basis $(u^k)$ of $V^*_e$ is called
\emph{regular}.

\definecolor{rd}{rgb}{0.0, 0.0, 0.9}
\definecolor{io}{rgb}{1.0, 0.65, 0.0}
\definecolor{lb}{rgb}{0.48, 0.45, 0.8}
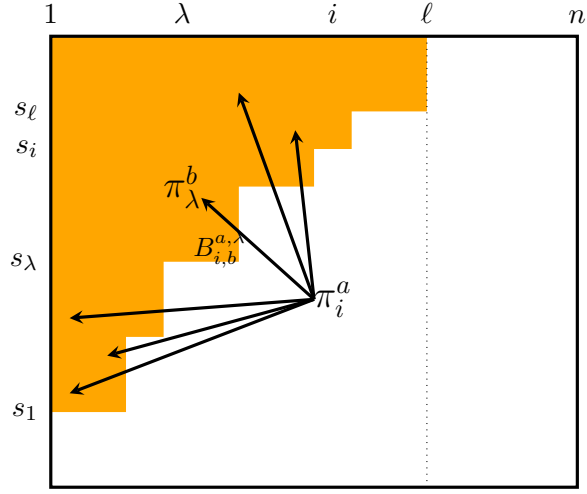
\begin{figure}
\begin{center}
\begin{tikzpicture} [scale=0.5]                                                 
\fill[io] (0,0) -- (10, 0) -- (10,-2) -- ( 9,-2) -- ( 9,-2) -- ( 8,-2) -- ( 8,-3) -- ( 7,-3) -- ( 7,-4) -- ( 6,-4) -- ( 6,-4) -- ( 5,-4) -- ( 5,-6) -- ( 4,-6) -- ( 4,-6) -- ( 3,-6) -- ( 3,-8) -- ( 2,-8) -- ( 2,-10) -- ( 1,-10) -- ( 1,-10) -- ( 0,-10) -- cycle;
\draw[very thick]     (0,0) -- (0,-12) -- (14,-12) -- (14,0) -- cycle;
\draw[dotted]     (10,0) -- (10,-12);
\draw (0,-2) node [left=1pt,black] {$s_\ell$};
\draw (0,-10) node [left=1pt,black] {$s_1$};
\draw (0,-6) node [left=1pt,black] {$s_\lambda$};
\draw (0,-3) node [left=1pt,black] {$s_i$};
\draw (0,0) node [above=1pt,black] {$1$};
\draw (3.5,0) node [above=1pt,black] {$\lambda$};
\draw (7.5,0) node [above=1pt,black] {$i$};
\draw (10,0) node [above=1pt,black] {$\ell$};
\draw (14,0) node [above=1pt,black] {$n$};
\draw  (3.5,-4) node {\Large $\pi^b_\lambda$};
\draw  (7.5,-7) node {\Large $\pi^a_i$};
\draw[very thick, ->,left] (7.0,-7) to node {\small $B^{a,\lambda}_{i,b}$} (4,-4.3);
\draw[very thick, ->] (7.0,-7) -- (5,-1.5);
\draw[very thick, ->] (7.0,-7) -- (0.5,-7.5);
\draw[very thick, ->] (7.0,-7) -- (1.5,-8.5);
\draw[very thick, ->] (7.0,-7) -- (0.5,-9.5);
\draw[very thick, ->] (7.0,-7) -- (6.5,-2.5);
\end{tikzpicture}
\end{center}
\caption{A tableau with Cartan characters $s_1 \geq s_2 \geq
\cdots \geq s_\ell$.  The upper-left shaded entries are
independent generators.  The lower-right entries depend on them via $\pi^a_i =
B^{a,\lambda}_{i,b}\pi^b_\lambda$, summed as in \eqref{symrels}.
See \cite{BCGGG} and \cite{Smith2014a}.}\label{figtab}
\end{figure}

For each $e\in M^{(1)}$, the symbol relations can be reduced as a minimal
system of equations of the form 
\begin{equation}
\Big\{ 0 = \pi^a_k - B^{a,\lambda}_{k,b} \pi^b_\lambda \Big\}_{s_k < a}
\label{symrels}
\end{equation}
where $B^{a,\lambda}_{k,b}=0$ unless 
$\lambda < k$ and  $b \leq s_\lambda$ and $s_k < a $, as discussed
in Chapter IV, \S5 of \cite{BCGGG}.
The symbol relations~\eqref{symrels} can be used to define an element\footnote{
Despite the complicated indexing, \eqref{IB} is just the dual of \eqref{symrels}.  For
example, one often encounters a linear condition like $\pair{\mathrm{d}y^a - p^a_i
\mathrm{d}x^i}$, and describes a solution as $\pair{\frac{\partial}{\partial x^i} +
p^a_i \frac{\partial}{\partial y^a}}$.}
of 
$\End(W_e) \otimes \End(V^*_e)$:
\begin{equation}
\sum_{a \leq s_k} \delta^\lambda_k \delta^a_b (w_a \otimes w^b) \otimes (u^k
\otimes u_\lambda) +   
\sum_{a>s_k} B^{a,\lambda}_{k,b} (w_a \otimes w^b) \otimes (u^k \otimes
u_\lambda).
\label{IB}
\end{equation}
Then, for each $\phi \in V^*_e$, there is a homomorphism $B(\phi):V_e \to
\End(W_e)$ defined by \eqref{IB}.  In Chapter V of \cite{BCGGG}, only the second summand of Equation~\eqref{IB} is
used, and the domain of $B(\varphi)$ is restricted to the annihilator of $\{
u^\lambda \}$, but the identity part is useful for us in Section~\ref{gnf}.
The endomorphism $B(\varphi)(v) \in \End(W_e)$ is most interesting when
restricted to a particular subspace,
\begin{equation}
\Wu_e^1(\varphi) = 
\left\{ z \in W_e : z \otimes \varphi + J^a_\varrho (w_a \otimes u^\varrho) \in
A_e, \text{ for some } J \right\}.
\label{W1e}
\end{equation}
In \cite{Guillemin1968}, Guillemin proved that involutivity implies that
$B(\varphi)(v)|_{\Wu^1_e(\varphi)}$ is an endomorphism of $\Wu^1_e(\varphi)$,
and that these endomorphisms commute for all $v \in V_e$.


The next several definitions are new (or at least, not found in the standard
references), but they allow us to formulate the main theorems clearly.

\begin{defn} An exterior differential system $(M,\mathcal{I})$ is called
\emph{{elementary}} if and only if $\pair{\Xi}_e = V^*_e$ for all $e \in
M^{(1)}$.  \label{defn:elementary}\end{defn}

One can see whether $(M,\mathcal{I})$ is elementary by examining its
characteristic sheaf.  In the language of commutative algebra, recall that an
algebraic ideal admits a saturation
ideal, which is the largest ideal defining the same variety.
The saturation of an ideal is a basic tool in computational algebraic geometry,
using Gr\"obner bases with tools such as Macaulay2. (See \cite{Bayer1993} and Exercise
5.10 on Page 125 of \cite{Hartshorne1977}.) The same terminology applies to a sheaf such as $\mathcal{M}$
with local coordinates parameterizing the fibers of $V^*$.  From that
perspective, ``elementary'' means  $\sat(\mathcal{M})_1=\emptyset$, so
$\sat(\mathcal{M})$ contains no linear functions, meaning that $\Xi$ is defined
only by higher-degree polynomials.  Since $\sat(\mathcal{M})_1$ plays an important
role, we emphasize and relabel it in Definition~\ref{sat1}.

\begin{defn}
Let $X^1$ denote the linear subbundle of $V$ with fiber
\[ X^1_e=\pair{\Xi}^\perp_e = \left\{ v \in \mathbb{P}e :\ v\lhk \xi=0\  \forall \xi \in
\Xi_e \right\} = (\sat \mathcal{M}_e)_1 \subset V_e. \]
\label{sat1}
\end{defn}

Next, we use $X^1$ to construct a new exterior differential system on
$M^{(1)}$.  Let $\omega^1,\ldots,\omega^m$ be a frame on $M$, and lift it to
give 1-forms $\omega^1, \ldots, \omega^m$ on $M^{(1)}$ via the pull-back of the
projection $M^{(1)} \to M$.  (We omit writing the pull-back.)   Fix a
particular element $e \in M^{(1)}$, and suppose that our coframe of $M$ is
generic and adapted so that $\{\omega^a\}$ span $e^\perp$.

Recall that the prolonged system $\mathcal{I}^{(1)}$ on $M^{(1)}$ takes the
form of a restricted contact system:
\begin{equation}
\begin{cases}
0 =h^\tau(P),& \forall \tau=1,\ldots,t\\
0 = \theta^a = \omega^a - P^a_k \omega^k,& \forall
a=n{+}1,\ldots,m 
\end{cases}
\label{proI}
\end{equation}
where the $(m-n)n$ numbers $P^a_k$ provide coordinates of nearby elements in $\Gr_n(TM)$ and the
$t$ functions $h^\tau$ describe the smooth submanifold $M^{(1)} \subset
\Gr_n(TM)$ of dimension $m{+}s$.  Their derivatives $0=\mathrm{d}h^\tau = \frac{\partial
h^\tau}{\partial P}\mathrm{d}P$ provide the symbol 
map $\sigma$ defining the tableau.

In a neighborhood of $e$, we may apply the independence condition
$\boldsymbol{\omega}=\omega^1\wedge\cdots\wedge\omega^n$ and write the degree-2 generators of
$\mathcal{I}^{(1)}$ using the tableau
$0=\sigma(\pi^a_k)$ as
\begin{equation}
\mathrm{d}\theta^a \equiv \pi^a_k\wedge\omega^k = \pi^a_i \wedge\omega^i +
\pi^a_\alpha\wedge\omega^\alpha \mod \{ \theta^b \}
\end{equation}

For each $i=1,\ldots,L$, fix $\xi^i \in \Xi_e$ and extend it to a local section
of $\Xi$ such that $\{\xi^i\}$ forms a basis of
$\pair{\Xi}$ in a neighborhood of $e$.  
Because the coframe $\omega^k$ is generic, it must be that $\xi^i =
H^i_j \omega^j + K^i_\beta \omega^\beta$ for some invertible $L \times L$
matrix $H$.  
Apply a
change of coframe to $M^{(1)}$ depending on $e$ so that $H^i_j\omega^j \mapsto \omega^i$. It can be arranged that the resulting coframe
is still generic.  (A particular method of changing the coframe this way is 
the linear projection described in Section~\ref{gnf}.)
Re-label $P$, 
$K$, and $\pi$ using this new coframe.   Near any $e  \in M^{(1)}$,
consider the system 
\begin{equation}
\begin{cases}
0=h^\tau(P),&\forall \tau=1,\ldots,t\\
0=\theta^a = \omega^a - \left(P^a_\beta - P^a_i K^i_\beta\right) \omega^\beta,&
\forall a=n{+}1,\ldots,m\\
0=\xi^i = \omega^i + K^i_\beta \omega^\beta,& \forall
i=1,\ldots,L 
\end{cases}
\label{elemI}
\end{equation}
Therefore, using the coframe 
$( \xi^i, \omega^\alpha, \theta^a, \cdots)$
on $M^{(1)}$ and the symbol $\sigma(\pi^a_k)=0$, the derivatives of system~\eqref{elemI} take the
form 
\begin{equation}
\begin{cases}
\mathrm{d}
\theta^a \equiv \left( \pi^a_\alpha - \pi^a_j
K^j_\alpha\right)\wedge\omega^\alpha,&  \mod \{ \theta^b, \xi^j\}\\
\mathrm{d}\xi^i \equiv \kappa^i_\alpha\wedge\omega^\alpha,&  \mod \{ \theta^b,
\xi^j\}
\end{cases}
\label{elemI2}
\end{equation}
\begin{defn}
Let $\elem(\mathcal{I})$ denote the linear Pfaffian system defined
locally on $M^{(1)}$ that is generated by Equations~\eqref{elemI} and
\eqref{elemI2} with independence condition 
$\omega^{L+1}\wedge\cdots\wedge\omega^n \neq 0$.
\end{defn}
Note that this system is generally not well-defined on $M$ because the
coefficients $K^i_\beta$ vary with $e \in M^{(1)}$. 
The system $\elem(\mathcal{I})$ is said to \emph{descend} to $M$ if all
vertical vector fields (the kernel of $TM^{(1)} \to TM$) are Cauchy retractions
of $\elem(\mathcal{I})$.  Moreover, the system $\elem(\mathcal{I})$ must be
defined on the complexification of $M^{(1)}$, since $\Xi$ is a complex variety.

Let $\elem^0(\mathcal{I}) = \mathcal{I}$, and recursively define
$\elem^{k}(\mathcal{I})=\elem(\elem^{k-1}(\mathcal{I}))$.

We can now state the main theorems.

\section{Main theorems}\label{mainthms}
\begin{mthm}
Let $(M,\mathcal{I})$ be an involutive exterior differential system with no
Cauchy retractions.  The following are equivalent:
\begin{enumerate}
\item\label{t0span} The ideal $\mathcal{I}$ is elementary, meaning
$\pair{\Xi}_e = V^*_e$ for all $e \in M^{(1)}$;
\item\label{t0sat1} $\left(\sat{\mathcal{M}}\right)_1=\emptyset$;
\item\label{t0elem} The system $\elem(\mathcal{I})$ on $M^{(1)}$ is Frobenius
(in particular, irrelevant);
\item\label{t0desc} The system $\elem(\mathcal{I})$ on $M^{(1)}$ descends to $M$.
\item\label{t0gnf}  If the Guillemin symbol endomorphism $B(\varphi)(v)|_{\Wu^1(\varphi)}$ is nilpotent for all $\varphi$, then $B(\varphi)(v)=0$.
\end{enumerate}
\label{mainthm0}
\end{mthm}

\pagebreak
\begin{mthm}
Let $(M,\mathcal{I})$ be an involutive exterior differential system.   The
following are equivalent:
\begin{enumerate}
\item\label{tSspan} $\pair{\Xi_e} = S^\perp_e$ for all $e \in M^{(1)}$;
\item \label{tSsat} $\left(\sat{\mathcal{M}}\right)_1=S$;
\item\label{tSelem} The system $\elem(\mathcal{I})$ on $M^{(1)}$ is Frobenius;
\item\label{tSdesc} The system $\elem(\mathcal{I})$ on $M^{(1)}$ descends to $M$;
\item\label{tSgnf}  If the Guillemin symbol endomorphism $B(\varphi)(v)|_{\Wu^1(\varphi)}$ is nilpotent for all $\varphi$, then $B(\varphi)(v)=0$.
\end{enumerate}
\label{mainthmS}
\end{mthm}

It is interesting that statements \ref{tSelem}, \ref{tSdesc}, and \ref{tSgnf}
ignore Cauchy retractions entirely.  This suggests that they may be useful when
studying ``intrinsic'' equivalence of Lie pseudogroups in the sense of mutual
coverings and B\"acklund transformations.  
The intrinsic nature of statement
\ref{tSgnf} is not very surprising, but the intrinsic nature of statement
\ref{tSelem} suggests a new invariant of $(M,\mathcal{I})$, which is the subject of the
next corollary.

\begin{cor} 
For any exterior differential system $(M,\mathcal{I})$, there exists 
some $\varepsilon \leq n$ such that the ideal
$\elem^\varepsilon(\mathcal{I})$ is Frobenius.
The minimum such $\varepsilon$ is called the
\emph{elementary depth} of $\mathcal{I}$.
Moreover, for any $e \in M^{(1)}$, there is a flag
\begin{equation} 
V_e = X_e^0 \supset X_e^1 \supset X_e^2 \supset \cdots \supset X_e^\varepsilon =
S_e 
\label{elemflag}
\end{equation}
where $(X_e^k)^\perp$ is the span of the characteristic variety of
$\elem^{k-1}(\mathcal{I})$.
\label{maincor1} 
\end{cor}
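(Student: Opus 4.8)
The plan is to exhibit the flag \eqref{elemflag} as the descending sequence of annihilators of the spans $\pair{\Xi_{\elem^{k-1}(\mathcal{I})}}$, to prove it decreases by unwinding one step of $\elem$, and to identify its stable value with $S_e$ by feeding the terminal layer into Main Theorem~\ref{mainthmS}. First I would record what one step does: passing from $\elem^{k-1}(\mathcal{I})$ to $\elem^{k}(\mathcal{I})=\elem(\elem^{k-1}(\mathcal{I}))$ adjoins to the Pfaffian part the $L$ one-forms $\xi^i$ whose restrictions span $\pair{\Xi_{\elem^{k-1}(\mathcal{I})}}$ and shrinks the independence condition by those $L$ directions (compare \eqref{elemI}--\eqref{elemI2}; note $\theta^a_{\mathrm{orig}}=\theta^a-P^a_i\xi^i$, so the old contact forms survive inside the new Pfaffian system). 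Hence, at an adapted point, a maximal integral element of $\elem^{k}(\mathcal{I})$ projects isomorphically onto $X^k_e:=\pair{\Xi_{\elem^{k-1}(\mathcal{I})}}^{\perp}_e\subseteq e$, with cotangent space canonically $V^*_e/\pair{\Xi_{\elem^{k-1}(\mathcal{I})}}_e\cong(X^k_e)^*$. Thus $\Xi_{\elem^{k}(\mathcal{I})}\subseteq\mathbb{P}(X^k_e)^*$, and the preimage in $V^*_e$ of its span is a subspace sitting between $\pair{\Xi_{\elem^{k-1}(\mathcal{I})}}_e$ and $V^*_e$; calling that preimage $\pair{\Xi_{\elem^{k}(\mathcal{I})}}_e$ and setting $X^{k+1}_e:=\pair{\Xi_{\elem^{k}(\mathcal{I})}}^{\perp}_e$ yields simultaneously $(X^{k}_e)^{\perp}=\pair{\Xi_{\elem^{k-1}(\mathcal{I})}}$ and $X^{k+1}_e\subseteq X^k_e$, consistent with $X^0_e=V_e$ and with Definition~\ref{sat1}.

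Next I would prove $S_e\subseteq X^k_e$ for all $k$. Since $\pair{\Xi}\subset S^{\perp}$ for any system, applied to $\elem^{k-1}(\mathcal{I})$ it is enough to show $S_e\subseteq S_{\elem^{k-1}(\mathcal{I})}$, i.e. that each $v\in e\cap\mathfrak{g}$ lifts to a Cauchy retraction of every $\elem^{j}(\mathcal{I})$ lying inside the integral element. The local flow of $v$ is a symmetry of $\mathcal{I}$; it prolongs to a symmetry of $\mathcal{I}^{(1)}$, and since the characteristic sheaf $\mathcal{M}$ -- hence $\pair{\Xi}$, hence the Pfaffian part of $\elem(\mathcal{I})$, which is just the preimage of $\pair{\Xi}$ among the semibasic forms -- is canonical, the prolonged flow preserves $\elem(\mathcal{I})$. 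The natural lift $\tilde v$ into the prolonged integral element annihilates each $\theta^a$ (it lies in an integral element of $\mathcal{I}^{(1)}$) and each $\xi^i$ (because $\langle\xi^i|_e,v\rangle=0$ by $\pair{\Xi}_e\subset S^{\perp}_e$ and $v\in e\cap\mathfrak{g}$); then Cartan's formula $\tilde v\lhk\mathrm{d}\beta=\mathcal{L}_{\tilde v}\beta-\mathrm{d}(\tilde v\lhk\beta)$ together with $\mathcal{L}_{\tilde v}$-invariance of the Pfaffian part gives $\tilde v\lhk\elem(\mathcal{I})\subseteq\elem(\mathcal{I})$, so $\tilde v\in\mathfrak{g}_{\elem(\mathcal{I})}$ and $\tilde v$ lies in the shrunken integral element. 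Iterating up the tower gives $S_e\subseteq S_{\elem^{j}(\mathcal{I})}$ for all $j$, hence $S_e\subseteq X^{k}_e$ for all $k$.

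For termination, the descending flag $V_e=X^0_e\supseteq X^1_e\supseteq\cdots$ of subspaces of the $n$-dimensional $e$ is bounded below by $S_e$, so it is eventually constant; moreover $X^{k+1}_e=X^k_e$ exactly when the span of $\Xi_{\elem^{k}(\mathcal{I})}$ in $(X^k_e)^*$ is trivial, i.e. $\Xi_{\elem^{k}(\mathcal{I})}=\emptyset$, i.e. (for an involutive system $\Xi=\emptyset\Leftrightarrow$ Frobenius, by item~(i) of the Corollary in Section~\ref{context}) exactly when $\elem^{k}(\mathcal{I})$ is Frobenius. So the first stabilization index $\varepsilon$ is the elementary depth; each of its $\varepsilon$ strict steps drops dimension by at least one, and with $\dim X^0_e=n$ and $\dim X^\varepsilon_e\geq\dim S_e\geq0$ we get $\varepsilon\leq\nu\leq n$. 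Finally $X^\varepsilon_e=S_e$: if $\mathcal{I}$ is already Frobenius this reads $V_e=S_e$, true because an integral element of a Frobenius ideal is tangent to a leaf; otherwise $\varepsilon\geq1$, ``$\supseteq$'' is the previous paragraph, and for ``$\subseteq$'' apply Main Theorem~\ref{mainthmS} to the involutive system $\elem^{\varepsilon-1}(\mathcal{I})$ -- whose $\elem$-image is the Frobenius system $\elem^\varepsilon(\mathcal{I})$ -- to get $\pair{\Xi_{\elem^{\varepsilon-1}(\mathcal{I})}}=S^{\perp}_{\elem^{\varepsilon-1}(\mathcal{I})}$, hence $X^\varepsilon_e=S_{\elem^{\varepsilon-1}(\mathcal{I})}$; it then remains to see $S_{\elem^{\varepsilon-1}(\mathcal{I})}=S_e$.

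The main obstacle is exactly that last equality: that $\elem$ does not enlarge the Cauchy-retraction space before the Frobenius stage is reached. The inclusion $S_e\subseteq S_{\elem^{k}(\mathcal{I})}$ is the symmetry computation above; the reverse is more delicate because $\elem^{k}(\mathcal{I})$ lives on a prolongation and could a priori carry a richer ``horizontal'' ideal than $\mathcal{I}$. I would handle it by exploiting $\elem(\mathcal{I})\supseteq p^{*}\mathcal{I}$ (writing $p:M^{(1)}\to M$; this follows from $\theta^a_{\mathrm{orig}}=\theta^a-P^a_i\xi^i$, so the pulled-back generators of $\mathcal{I}$ lie in $\elem(\mathcal{I})$): a Cauchy retraction of $\elem^{k}(\mathcal{I})$ inside an integral element contracts those generators, and, using the structure equations \eqref{elemI2} together with Main Theorem~\ref{mainthmS} -- which pins down that new retractions can appear only at the stage where $\pair{\Xi}=S^{\perp}$, equivalently where $\elem$ becomes Frobenius, equivalently where it descends to the base -- one shows such a retraction descends to a Cauchy retraction of $\mathcal{I}$. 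Granting this, $X^\varepsilon_e=S_{\elem^{\varepsilon-1}(\mathcal{I})}=S_e$, the flag \eqref{elemflag} is as claimed, and $\varepsilon\leq\nu\leq n$; the rest is bookkeeping with annihilators and the inequalities $\ell\leq L\leq\nu\leq n$ already established.
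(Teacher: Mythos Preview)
The paper offers essentially no proof of this corollary---only the remark about the Frobenius base case---treating the bound $\varepsilon \leq n$ as a dimension count and the flag as definitional.  Your proposal is far more careful than the paper, and you correctly isolate the delicate step as whether $S_{\elem^{\varepsilon-1}(\mathcal{I})}$ coincides with $S_e$.

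There is, however, a genuine gap that undermines two of your steps.  You invoke Main Theorem~\ref{mainthmS} on ``the involutive system $\elem^{\varepsilon-1}(\mathcal{I})$,'' and in the termination argument you use the equivalence ``$\Xi=\emptyset \Leftrightarrow$ Frobenius'' (item~(i) of the Corollary in Section~\ref{context}) for $\elem^{k}(\mathcal{I})$.  Both require the intermediate systems $\elem^{k}(\mathcal{I})$ to be involutive.  But the paper states explicitly that involutivity of $\elem(\mathcal{I})$ is an \emph{open problem}---this is Conjecture~\ref{invconj}, established only in the local-PDE case with locally constant $\pair{\Xi}$ (Section~\ref{proext}, Lemma~\ref{killspencer}).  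Without it, your termination argument can stall: a non-involutive step could have $\Xi=\emptyset$ yet fail to be Frobenius, so $X^{k+1}_e=X^k_e$ without the desired conclusion.  Likewise your identification $X^\varepsilon_e = S_{\elem^{\varepsilon-1}(\mathcal{I})}$ via \ref{tSelem}$\Leftrightarrow$\ref{tSspan} of Main Theorem~\ref{mainthmS} is not available.

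The paper itself does not resolve this; it simply asserts the corollary and then packages the full recursive picture as Main Theorem~\ref{mega-foliation}, which is \emph{explicitly conditional} on Conjecture~\ref{invconj}.  So the honest reading is that Corollary~\ref{maincor1} is a formal dimension count together with a definition of elementary depth, while the terminal equality $X^\varepsilon_e=S_e$ is morally a consequence of Main Theorem~\ref{mainthmS} applied one level down, but is rigorously available only under the standing conjecture (or in the PDE setting where it is proven).  Your write-up would be correct and complete if you added that hypothesis; as written, the phrase ``the involutive system $\elem^{\varepsilon-1}(\mathcal{I})$'' assumes precisely what the paper leaves open.
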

In the case that $\mathcal{I}$ is already Frobenius, $\varepsilon=0$, for Frobenius
ideals are identical to their prolongation and have no characteristic variety,
so $(M,\mathcal{I})$ Frobenius trivially implies
$\elem^1(\mathcal{I})=\mathcal{I}^{(1)}=\mathcal{I}=\elem^0(\mathcal{I})$ is
Frobenius.  

The elementary system may be pulled back to maximal ordinary integral
manifolds, and there it is Frobenius, as given by Main Theorem~\ref{foliation}.

\begin{mthm}
Suppose that $(M,\mathcal{I})$ is an involutive exterior differential system.
For every maximal ordinary integral manifold $\iota:N \to M$ and every $y \in N$, there
are unique submanifolds $\Lambda \subset D \subset N$ such that $T_y \Lambda = S_N$
and $T_y D = X^1_N$. 
That is, every ordinary integral element $\iota^{(1)}:N\to M^{(1)}$ is locally foliated by manifolds $D$ integral to
$\elem(\mathcal{I})$, and each such $D \subset N$ is foliated by manifolds
$\Lambda$ integral to $\mathfrak{g}^\perp$.
\label{foliation}
\end{mthm}
The qualifier ``locally'' is required in Main Theorem~\ref{foliation} because
the eikonal system does not guarantee global solutions.
Main Theorem~\ref{foliation} does \emph{not} imply that $\elem(\mathcal{I})$ is
Frobenius as an ideal on $M^{(1)}$, nor does it even imply that
$\elem(\mathcal{I})$ is involutive.  At most, it yields
Corollary~\ref{proelemC}.

\begin{cor}
If $(M,\mathcal{I})$ is an analytic involutive exterior differential system,
then some prolongation of $\elem(\mathcal{I})$ over $M^{(1)}\otimes \mathbb{C}$
is involutive.
\label{proelemC}
\end{cor}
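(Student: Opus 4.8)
The plan is to read the statement off from the Cartan--Kuranishi prolongation theorem, using Main Theorem~\ref{foliation} to exclude the degenerate branch of that theorem. Recall the Cartan--Kuranishi theorem in the analytic category (see \cite{BCGGG}), which applies verbatim in the holomorphic category, as is needed over $M^{(1)}\otimes\mathbb{C}$: for an analytic exterior differential system with independence condition, the tower of prolongations $\mathcal{E}_0=\elem(\mathcal{I})$, $\mathcal{E}_{k+1}=\mathcal{E}_k^{(1)}$ stabilizes after finitely many steps to a system that is either involutive or \emph{degenerate}, in the sense that at some finite level there are no K\"ahler-ordinary integral elements of the dimension prescribed by the independence condition (the prolongation manifold becomes empty or drops dimension). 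So it suffices to rule out the degenerate alternative for $\elem(\mathcal{I})$ and for every one of its prolongations. First one should check that the data defining $\elem(\mathcal{I})$ --- namely $\Xi$, $\pair{\Xi}$, $X^1$, and the coefficients $K^i_\beta$ --- depend holomorphically on $e$ after complexification, so that $\elem(\mathcal{I})$ is a bona fide holomorphic linear Pfaffian system on $M^{(1)}\otimes\mathbb{C}$ to which the holomorphic Cartan--Kuranishi theorem applies; this is immediate from $\mathcal{I}$ being analytic, since the polar-rank conditions cutting out $\Xi$ are algebraic.

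Next, because $(M,\mathcal{I})$ is analytic and involutive, the Cartan--K\"ahler theorem provides, through every $e\in M^{(1)}$, the prolongation $\iota^{(1)}:N\to M^{(1)}$ of a maximal ordinary integral manifold with $e$ in its image. By Main Theorem~\ref{foliation}, a neighborhood of that point in $\iota^{(1)}(N)$ is foliated by integral manifolds $D$ of $\elem(\mathcal{I})$ with $TD=X^1$, so $\dim D=n-L$, which is exactly the degree of the independence condition $\omega^{L+1}\wedge\cdots\wedge\omega^n$ attached to $\elem(\mathcal{I})$. Hence $\elem(\mathcal{I})$ admits $(n-L)$-dimensional integral manifolds satisfying its independence condition through every real point of $M^{(1)}\otimes\mathbb{C}$; since $\dim\Xi$, the tableau, and $L$ are locally constant on $M^{(1)}$, and since a holomorphic obstruction vanishing on the maximal totally real submanifold $M^{(1)}$ vanishes on a neighborhood, the same holds on an open dense subset of $M^{(1)}\otimes\mathbb{C}$. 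Prolonging each such $D$ repeatedly yields, for every $k$, an $(n-L)$-dimensional integral manifold of $\mathcal{E}_k$ compatible with the independence condition; so no $\mathcal{E}_k$ is degenerate, and the Cartan--Kuranishi dichotomy forces $\mathcal{E}_k=\elem(\mathcal{I})^{(k)}$ to be involutive for some $k$, after the usual shrinking of the base to an open dense set.

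The argument is bookkeeping rather than a new idea, and the one delicate point is reconciling ``an integral \emph{manifold} of the top expected dimension exists'' with the precise non-degeneracy that the Cartan--Kuranishi dichotomy demands, which is phrased in terms of K\"ahler-ordinary integral \emph{elements}. One must either verify that the $D$-leaves of Main Theorem~\ref{foliation} (and their iterated prolongations) are K\"ahler-ordinary, or invoke the standard fact that in the analytic category the presence of integral manifolds of the prescribed dimension through generic points already excludes the degenerate branch throughout the prolongation tower. Minor secondary points are the local-on-$M^{(1)}$ nature of $\elem(\mathcal{I})$ and of Main Theorem~\ref{foliation}, which is harmless since involutivity is itself local and generic; note in particular that the bound $\varepsilon\le n$ of Corollary~\ref{maincor1} is irrelevant here, as only the existence of \emph{some} finite prolongation is asserted.
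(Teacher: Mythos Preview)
Your proposal is correct and follows essentially the same route as the paper: apply the Cartan--Kuranishi prolongation theorem to obtain the dichotomy ``involutive or empty,'' then rule out emptiness by exhibiting $(n{-}L)$-dimensional integral manifolds of $\elem(\mathcal{I})$. The only cosmetic difference is that the paper cites Lemma~\ref{proelem} (the existence of a smooth family of $(n{-}L)$-dimensional integral manifolds) rather than Main Theorem~\ref{foliation}, but these two are proven jointly and supply the same input here; your additional remarks on holomorphicity and on the ordinariness of the $D$-leaves are reasonable caveats that the paper's terse proof leaves implicit.
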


The strongest possible version of Corollary~\ref{proelemC} would be the following
conjecture.
\begin{conj}
Suppose that $(M,\mathcal{I})$ is an analytic involutive exterior differential system,
considered over $\mathbb{C}$.
Then $\elem(\mathcal{I})$ is involutive on $M^{(1)}$, and the integral manifold
$D$ from Main Theorem~\ref{foliation} is ordinary.
\label{invconj}
\end{conj}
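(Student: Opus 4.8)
The plan is to prove the two assertions in turn --- that $\elem(\mathcal{I})$ is involutive on $M^{(1)}\otimes\mathbb{C}$, and that the leaves $D$ of Main Theorem~\ref{foliation} are ordinary. Because $\elem^{k}(\mathcal{I})=\elem(\elem^{k-1}(\mathcal{I}))$ and the hypothesis of the conjecture holds equally for $\mathcal{I}$ and for each $\elem^{k}(\mathcal{I})$ once $D$ is known ordinary (so that the prolongation used to build $\elem^{k+1}(\mathcal{I})$ is the correct one), it suffices to prove the single-step statement: if $\mathcal{I}$ is analytic and involutive, then $\elem(\mathcal{I})$ is analytic and involutive and its leaf $D$ is ordinary. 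Proving this single-step statement gives the conjecture directly, and, by iteration, shows that every $\elem^{k}(\mathcal{I})$ in the flag \eqref{elemflag} of Corollary~\ref{maincor1} is involutive as well.

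For involutivity, read the tableau $A'_e$ of $\elem(\mathcal{I})$ off of \eqref{elemI2}: it is the $s$-dimensional subspace of $W'_e\otimes U^*_e$ --- where $W'_e$ is spanned by $\{\theta^a,\xi^i\}$ and $U^*_e$ by the independence variables $\{\omega^\alpha\}$ --- cut out by the symbol of $\mathcal{I}$ re-expressed through the forms $\pi^a_\alpha-\pi^a_jK^j_\alpha$ and $\kappa^i_\alpha$. This is the same $s$ tableau parameters as for $\mathcal{I}$, redistributed over an ambient space of different dimensions, so the Cartan characters $s'_{L+1}\geq\cdots\geq s'_n$ and the characteristic variety of $\elem(\mathcal{I})$ genuinely change. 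The key structural input is Guillemin's commutativity theorem from \cite{Guillemin1968}: involutivity of $\mathcal{I}$ forces the $B(\varphi)(v)|_{\Wu^1_e(\varphi)}$ to be a commuting family for all $v\in V_e$, and the coframe change sending $H^i_j\omega^j\mapsto\omega^i$ is precisely the passage to a simultaneous Guillemin normal form along $\pair{\Xi}_e$. In that coframe I expect the symbol relations \eqref{symrels} inherited by $\elem(\mathcal{I})$ to be block-triangular with respect to the simultaneous generalized-eigenspace decomposition of the $B(\varphi)(v)$, so that Cartan's equality $\dim (A'_e)^{(1)}=s'_{L+1}+2s'_{L+2}+\cdots+(n{-}L)\,s'_n$ can be read off directly. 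Finally, the apparent torsion of \eqref{elemI2} must vanish: the torsion of a linear Pfaffian system vanishes at a point exactly when an admissible integral element exists there, Main Theorem~\ref{foliation} supplies one --- the canonical $(n{-}L)$-plane $X^1_e$, lifted into $T_eM^{(1)}$ --- at \emph{every} $e\in M^{(1)}$, since, $\mathcal{I}$ being analytic, Cartan--K\"ahler realizes every $e\in M^{(1)}$ as $\iota^{(1)}(y)$ for some ordinary integral manifold $N$, and torsion is analytic, hence identically zero. An involutive tableau together with vanishing torsion yields involutivity of the linear Pfaffian system $\elem(\mathcal{I})$.

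For ``$D$ ordinary,'' once $\elem(\mathcal{I})$ is involutive on $M^{(1)}\otimes\mathbb{C}$ it has an open dense locus $\Var^o_{n-L}(\elem(\mathcal{I}))$ of K\"ahler-ordinary integral elements, cut out by smooth functions. The leaf $D$ is tangent to the canonical subbundle $X^1\subset V$ and lies inside an ordinary integral manifold $N$ of $\mathcal{I}$; refining an ordinary flag of $\mathcal{I}$ on $N$ so that it passes through $T_yD$ produces a flag along which the extra Pfaffian equations $\xi^i$ and $\theta^a$ of $\elem(\mathcal{I})$ hold identically, and I would show that such a flag is ordinary for $\elem(\mathcal{I})$ by comparing the ranks of its polar systems with those of $\mathcal{I}$: the independence directions of $\elem(\mathcal{I})$ were chosen, through the coframe adaptation, to avoid $\Xi_e$, so no polar rank jumps before dimension $n{-}L$. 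Hence $T_yD\in\Var^o_{n-L}(\elem(\mathcal{I}))$, and Corollary~\ref{proelemC} is recovered a fortiori.

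The hard part --- and the reason this is stated only as a conjecture --- is the middle step: controlling the Cartan characters and the torsion of $\elem(\mathcal{I})$ simultaneously in a single application of $\elem$, without prolonging, since Corollary~\ref{proelemC} only gives involutivity after an unspecified number of prolongations. The Cartan-character count is delicate because the section $\xi$ contributes the forms $\kappa^i_\alpha$, whose symbol relations depend on how $\Xi_e$ sits inside $\pair{\Xi}_e$, i.e.\ on the gap $\ell\leq L$; and ruling out a hidden obstruction appears to require the full strength of Main Theorem~\ref{mainthmS} --- that the Guillemin symbol algebras carry no common nilpotent subalgebra along the characteristic locus, equivalently $L=\nu$ --- which need not hold in general. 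A complete proof would presumably demand a version of Guillemin normal form that is uniform fibrewise over $\Xi$, strong enough to survive the replacement of $\mathcal{I}$ by $\elem(\mathcal{I})$.
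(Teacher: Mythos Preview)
This statement is a \emph{conjecture}, and the paper does not prove it. The author states explicitly that ``a general proof of Conjecture~\ref{invconj} eludes the author in light of significant technical obstacles discussed in Section~\ref{proext},'' and in the Discussion section writes ``I do \emph{not} have a strong sense of whether the Conjectures are actually true.'' So there is no proof in the paper to compare against, and your proposal---which you yourself acknowledge is only a plan with a ``hard part'' left open---is likewise not a proof.

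That said, the paper does establish the conjecture in a special case, and its route differs from yours. The paper's argument (Section~\ref{proext}) introduces the \emph{formal} tableau $\dot{A}=A|_X$, the projection of $A$ to $W\otimes X^*$, and shows in Lemma~\ref{killspencer} that $H^{\rho}(\dot{A})=0$ for all $\rho\geq 2$ simply because the projections $A\to\dot{A}$ and $V^*\to X^*$ are surjective and commute with the skewing map $\delta$. In the local-PDE case with $\pair{\Xi}$ locally constant, one may take the adapted coframe to be closed, forcing $\eta^i_\alpha=0$ and hence $E=\dot{A}$; the conjecture then follows. The general obstruction the paper isolates is \emph{not} a Cartan-character count but rather Conjecture~\ref{rankconj}: whether the characteristic sheaf of the actual tableau $E$ of $\elem(\mathcal{I})$ coincides with that of the easily-handled $\dot{A}$.

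Your sketch contains a concrete error that would derail it even as a plan. You describe the tableau $A'_e$ of $\elem(\mathcal{I})$ as ``the $s$-dimensional subspace of $W'_e\otimes U^*_e$'' with ``the same $s$ tableau parameters as for $\mathcal{I}$.'' Neither the ambient space nor the dimension is right. The independence forms of $\elem(\mathcal{I})$ are $u^{L+1},\ldots,u^n$, so the tableau lives over $X^*_e$, not $U^*_e$; and by Lemma~\ref{dXE} the tableau is the subspace $E_e\subset A_e\otimes X^*_e$ cut out by \eqref{elem1condition}, which has no reason to be $s$-dimensional. The forms $\pi^a_\alpha-\pi^a_jK^j_\alpha$ do not carry all $s$ parameters of $A_e$: the $\pi^a_i$ entries are killed modulo $\{\xi^j\}$, and the $\eta^i_\alpha$ (your $\kappa^i_\alpha$) are determined by the coframe, not by $A$. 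So the block-triangular Guillemin-form argument you outline is aimed at the wrong object; the paper's approach via $\dot{A}$ and Spencer cohomology is what actually controls involutivity, and even that only closes in the special case where $E=\dot{A}$.
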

As seen in Section~\ref{intext}, this conjecture holds in the case that the
involutive exterior differential system $(M^{(1)},\mathcal{I}^{(1)})$
represents a PDE in local jet-space coordinates such that the span of the
characteristic variety is locally constant.  A general proof of
Conjecture~\ref{invconj} eludes the author in light of significant technical
obstacles discussed in Section~\ref{proext}, but it would imply a beautifully recursive version of Main Theorem~\ref{foliation}.

\begin{mthm}
Suppose that Conjecture~\ref{invconj} holds.
If $(M,\mathcal{I})$ is an analytic involutive exterior differential system
over $\mathbb{C}$, then 
every ordinary integral manifold $N$ of
$(M,\mathcal{I})$ is foliated locally by submanifolds $N \supset D^1 \supset D^2 \supset \cdots
\supset D^\epsilon = \Lambda$ where $TD^k=X^k$.

Moreover, each $X^k$ admits a decomposition 
$X^k=U^{k+1} \oplus Y^{k+1} \oplus X^{k+1}$ where
the characteristic variety of $\elem^{k}(\mathcal{I})$ spans $(U^{k+1} \oplus
Y^{k+1})^*$ and admits a finite branched cover over $(U^{k+1})^*$.
\label{mega-foliation}
\end{mthm}
When it holds, Main Theorem~\ref{mega-foliation} can be seen as a recursive version of
Guillemin normal form, in the sense that the Guillemin symbols of $\elem^k(\mathcal{I})$
form commutative algebras on $(Y^{k+1} + X^{k+1})$ in the usual way (see
Theorem~\ref{thm-gnf1}).

One other important case does not require any recursion.
\begin{cor}
Suppose that $(M,\mathcal{I})$ is involutive and $\ell = n-1$.  (For example,
if it is determined.\footnote{Recall that an exterior differential system is
called \emph{determined} if $\dim A^\perp_e = \dim W_e$ and $\Xi \neq V^*_e$,
equivalently if $\ell=n-1$ and $s_1=s_2=\cdots=s_{n-1} = \dim W_e$, as
discussed in Section 1.4 of \cite{Yang1987}.})
Then exactly one of the following must hold:
\begin{enumerate}
\item\label{determined0} $\ell = L = \nu < n$, in which case
$(M,\mathcal{I})$ admits
Cauchy retractions to an elementary involutive system in dimension
$n-1$;
\item\label{determined1} $\ell = L < \nu = n$, in which case each
maximal ordinary integral manifold locally admits a foliation by
curves annihilated by $\Xi|_N$; 
\item\label{determinedI} $\ell < L = \nu = n$, in which case 
each maximal ordinary integral manifold locally admits a complete system of
characteristic coordinates.
\end{enumerate}
\label{determined}
\end{cor}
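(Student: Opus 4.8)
The plan is to read the trichotomy straight off the chain $1 \leq \ell \leq L \leq \nu \leq n$ under the hypothesis $\ell = n-1$, and then to specialize the general reduction and foliation results to the three resulting codimension-one configurations. First, since $\ell = n-1$ and $\ell \leq L \leq n$ one has $L \in \{n-1,n\}$, and then $L \leq \nu \leq n$ leaves only $(L,\nu) \in \{(n-1,n-1),\,(n-1,n),\,(n,n)\}$; these alternatives are mutually exclusive, exhaustive, and are exactly \ref{determined0}, \ref{determined1}, \ref{determinedI}. So the ``exactly one'' assertion is immediate, and it remains to extract the stated geometry in each branch. I expect \ref{determinedI} to be the only branch requiring real work.

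For branch \ref{determined0} ($\ell=L=\nu=n-1$) I would first note that $S^\perp_e$ has corank $n-\nu=1$ in $V^*_e$, so the Cauchy retraction space $\mathfrak g$ meets every maximal integral element in a line. Since $\mathfrak g$ is integrable and lies in every polar space $H(\xi^\perp)$ (because $v\lhk\mathcal I\subset\mathcal I$ makes $e'+\pair{v}$ integral whenever $e'$ is), I would pass to the reduction $\bar M=M/\mathfrak g$ with its reduced system $\bar{\mathcal I}$: its maximal integral elements have dimension $n-1$, it is again involutive (classical Cauchy characteristic reduction), and since $\mathfrak g$ sits inside both $e$ and $H(e')$ the polar ranks are unchanged, so — identifying $\mathbb P\bar e^*\otimes\mathbb C=\mathbb P S^\perp_e\otimes\mathbb C$, which is legitimate because $\pair{\Xi}_e\subset S^\perp_e$ — the characteristic variety of $\bar{\mathcal I}$ is $\Xi_e$ itself. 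Thus each maximal ordinary $\iota:N\to M$ appears locally as a family of Cauchy characteristic curves over a maximal ordinary integral manifold of $\bar{\mathcal I}$. Finally $L=\nu$ together with $\pair{\Xi}_e\subset S^\perp_e$ forces $\pair{\Xi}_e=S^\perp_e=\bar e^*$, so $\pair{\Xi}_{\bar e}=\bar e^*$ and $\bar{\mathcal I}$ is elementary by Definition~\ref{defn:elementary}. (Consistently, $\pair{\Xi}_e=S^\perp_e$ gives $X^1_e=S_e$ directly, which is also what Main Theorem~\ref{mainthmS} predicts: $\elem(\mathcal I)$ has elementary depth one, with the Cauchy characteristics as its integral curves.)

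Branch \ref{determined1} ($\ell=L=n-1<\nu=n$) should be immediate from Main Theorem~\ref{foliation}: here $S_e=\emptyset$ (no Cauchy retractions) and $X^1=\pair{\Xi}^\perp$ has rank $n-L=1$, so each maximal ordinary $\iota:N\to M$ is locally foliated by $1$-dimensional submanifolds $D$ with $T_yD=X^1_N$, and because $X^1=\pair{\Xi}^\perp$ every tangent direction to such a curve is annihilated by $\Xi|_N$; with $S=\emptyset$ these are genuinely characteristic curves, not Cauchy characteristics, and the leaf $\Lambda$ of Main Theorem~\ref{foliation} degenerates to a point. For branch \ref{determinedI} ($\ell=n-1<L=\nu=n$) the key point is that $\pair{\Xi}_e=V^*_e$ while $\dim\Xi_e=\ell-1=n-2=\dim\mathbb P V^*_e-1$, so $\Xi_e$ is a hypersurface not contained in any hyperplane; hence its reduced defining form $\chi$ is a homogeneous polynomial of degree $\geq 2$ (or a product of $\geq 2$ distinct linear forms), never a power of a single linear form. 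On a maximal ordinary $\iota:N\to M$, pulling the characteristic sheaf back along $\iota^{(1)}$ and using $V^*_{e(y)}\cong T^*_yN\otimes\mathbb C$ (with $e(y)=\iota^{(1)}(y)$) turns ``$[\mathrm{d}f]\in\Xi|_N$'' into a single scalar first-order eikonal equation $\chi(\mathrm{d}f)=0$. I would then use that the smooth locus of the hypersurface $\Xi_{e(y)}$ is Zariski-dense, hence still spans $V^*_{e(y)}$, to pick $\xi^1,\dots,\xi^n\in\Xi_{e(y)}$ forming a basis with $\mathrm{d}\chi\neq 0$ at each; the classical construction of characteristic hypersurfaces (Cauchy--Kovalevskaya for $\chi(\mathrm{d}f)=0$ in the analytic case, or the bicharacteristic flow at a simple characteristic) then produces local solutions $f^k$ near $y$ with $\mathrm{d}f^k(y)=\xi^k$, the needed non-characteristic Cauchy hypersurface existing because $\partial\chi/\partial p(\xi^k)\neq 0$. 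The $f^1,\dots,f^n$ have independent differentials at $y$, hence form local coordinates on $N$ with each $\mathrm{d}f^k$ characteristic.

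The hard part is branch \ref{determinedI}: one must use that $\ell=n-1$ is exactly the condition making $\Xi$ a hypersurface, so that the characteristic condition on $f$ is a single scalar PDE rather than an overdetermined system, and then that this eikonal equation is locally solvable with prescribed non-degenerate differential at a point — equivalently that a non-characteristic Cauchy problem for it is well-posed at smooth points of $\Xi$, so that the bicharacteristic flow is available. By contrast, branches \ref{determined0} and \ref{determined1} require only (a) that Cauchy characteristic reduction preserves involutivity without changing the characteristic variety and (b) Main Theorem~\ref{foliation}; the remaining work there is the routine bookkeeping of $\dim(e\cap\mathfrak g)=n-\nu$ and $\rank X^1=n-L$ against the flag \eqref{elemflag}.
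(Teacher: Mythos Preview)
Your proof is correct. The paper does not give an explicit proof of this corollary; it is evidently meant to be read off the Main Theorems together with the eikonal machinery of Section~\ref{int}, so your write-up is a legitimate filling-in of details.

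The one place you take a genuinely different route is branch~\ref{determinedI}. You exploit the special feature $\ell=n-1$ to note that $\Xi_e$ is a hypersurface, reduce the eikonal condition $[\mathrm{d}f]\in\Xi_N$ to a single scalar first-order PDE $\chi(\mathrm{d}f)=0$, and then invoke classical Hamilton--Jacobi theory (bicharacteristic flow at a simple characteristic, or Cauchy--Kovalevskaya in the analytic case) to produce $n$ independent solutions. The paper's intended argument is shorter but deeper: by Theorem~\ref{intchar} (the Guillemin--Quillen--Sternberg / Gabber integrability of characteristics), the eikonal system $\mathscr{E}(\Xi_N)$ is involutive, and the proof of Lemma~\ref{spaninv} already constructs $L$ local functions $y^i$ with $\mathrm{d}y^i\in\Xi_N$; when $L=n$ that \emph{is} a complete characteristic coordinate system. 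So what you call ``the hard part'' is in fact immediate from the paper's Section~\ref{int} machinery. Your approach buys elementarity---you avoid citing the deep Gabber/GQS theorem by using that a hypersurface eikonal is a scalar PDE---while the paper's approach buys uniformity, treating branch~\ref{determinedI} by the same lemma that handles the other two. Both require restricting to smooth points of $\Xi$, which you correctly flag.

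Your treatments of branches~\ref{determined0} and~\ref{determined1} match what the paper's results imply: branch~\ref{determined0} is standard Cauchy-characteristic reduction (with Main Theorem~\ref{mainthmS} confirming $X^1_e=S_e$), and branch~\ref{determined1} is the $n-L=1$ case of Main Theorem~\ref{foliation}.
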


The remainder of this article proves these theorems (and a few others) in a
piecemeal manner, first using the eikonal system in Section~\ref{int} to
guarantee that bases adapted to $\pair{\Xi}_e$ can be extended to frames on
$N$, then adapting Guillemin normal form in Section~\ref{gnf} to 
express $X^1$ in terms of the symbol, and finally
exploring the integrable extension $\elem(\mathcal{I})$ in
Section~\ref{intext}.  Sections~\ref{parabolic} and \ref{sec:art} show examples
that suggest future work.

\section{Involutivity of the eikonal system}
\label{int}
Suppose that $\Sigma$ is a sub-bundle of $V^*$ whose fiber $\Sigma_e$ over any
$e \in M^{(1)}$ is a projective variety.  On any ordinary $n$-dimensional
integral manifold $\iota:N\to M$, we have that $\iota_*(TN) \subset M^{(1)}$.
Consider the restricted bundle $\Sigma_N = \Sigma_{\iota_*(TN)}$, which may be
considered via the immersion $\iota$ as a projective sub-variety of $T^*N$.

Now, $T^*N \times \mathbb{R}$ is identical to the jet space $\mathbb{J}^1(N,\mathbb{R})$ and carries
a canonical contact 1-form $\Upsilon$ that may be expressed in local jet
coordinates $(y^1,\ldots,y^n$, $z$, $p_1,\ldots,p_n)$ as $\Upsilon=\mathrm{d}z-p_k
\mathrm{d}y^k$.  Let 
$\psi:\Sigma_N \to T^*N$ denote the inclusion defining $\Sigma_N$. Since each
fiber is a projective variety, $\Sigma_N$ is defined locally by functions
$F^\lambda(y,p)$ that are homogeneous polynomials in $p$.   The \emph{eikonal
system} of $\Sigma$, denoted by
$\mathscr{E}(\Sigma_N)$, is the Pfaffian system on $\Sigma_N \times \mathbb{R}$
that is differentially generated by $\psi^*(\Upsilon)$ with independence
condition $\mathrm{d}y^1 \wedge \cdots \wedge\mathrm{d}y^n$.
The purpose of the eikonal system is to obtain specific results of the following form:
\begin{lemma}
Suppose that $(M,\mathcal{I})$ is involutive, that $\Sigma \subset V^*$ is a
projective variety, that $\iota:N \to M$ is an ordinary integral manifold of
$(M,\mathcal{I})$, and that the eikonal system $(\Sigma_N,\mathscr{E}(\Sigma_N))$
is involutive.  Then, for any $\xi_0$ in the fiber $\Sigma_{N,y}$ over $y$, there is at
least one hypersurface $H\subset N$ such that $(T_yH)^\perp = \ker \xi_0$ and
such that $(T_zH)^\perp \in \Sigma_{N,z}$ for all $z \in H$.    Moreover, 
such hypersurfaces are parameterized according to the Cartan characters of
$\mathscr{E}(\Sigma_N)$.
\label{genint}
\end{lemma}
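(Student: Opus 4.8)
The plan is to read $\mathscr{E}(\Sigma_N)$ as a generalized eikonal system — a first-order PDE system for a single scalar function on $N$ — and to obtain the hypersurfaces $H$ as level sets of its solutions, whose existence is supplied by the Cartan--K\"ahler theorem applied to the involutive system $(\Sigma_N,\mathscr{E}(\Sigma_N))$. First I would unwind the construction: an $n$-dimensional integral manifold of $\mathscr{E}(\Sigma_N)$ meeting the independence condition $\mathrm{d}y^1\wedge\cdots\wedge\mathrm{d}y^n\neq 0$ projects as a local diffeomorphism onto $N$, hence is a graph $\{(y,f(y),p(y))\}\subset \Sigma_N\times\mathbb{R}$; the vanishing of $\psi^*(\Upsilon)=\mathrm{d}z-p_k\,\mathrm{d}y^k$ forces $p(y)=\mathrm{d}f(y)$, and membership in $\Sigma_N\times\mathbb{R}$ forces $F^\lambda(y,\mathrm{d}f(y))=0$, which — the $F^\lambda$ being homogeneous in $p$ — says precisely $[\mathrm{d}f(y)]\in\Sigma_{N,y}$ at every point. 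Conversely any such $f$ is an integral manifold. The geometric payoff is then immediate: for such an $f$ and a regular value $c$, the level set $H=f^{-1}(c)$ is a hypersurface with conormal line $(T_zH)^\perp=\mathbb{R}\,\mathrm{d}f(z)\in\Sigma_{N,z}$ for all $z\in H$. So the lemma reduces to producing one solution $f$ with $[\mathrm{d}f(y)]=\xi_0$ at the prescribed point $y$: then $\mathrm{d}f(y)\neq 0$, so $y$ is a regular point of $H:=f^{-1}(f(y))$ and $(T_yH)^\perp=\ker\xi_0$ in the notation of the statement.

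To build that solution I would invoke Cartan--K\"ahler on the involutive Pfaffian system $\mathscr{E}(\Sigma_N)$. Fix a representative $p_0\in T^*_yN$ of $\xi_0$ and an arbitrary $z_0\in\mathbb{R}$, so that $(y,z_0,p_0)$ is a point — a $0$-dimensional integral element — of $\mathscr{E}(\Sigma_N)$. Since $(\Sigma_N,\mathscr{E}(\Sigma_N))$ is involutive by hypothesis and, in the analytic category in which $\Sigma$ is algebraic along the fibers, the defining functions are real-analytic, the Cartan--K\"ahler theorem extends $(y,z_0,p_0)$ along a regular flag of K\"ahler-ordinary integral elements to an $n$-dimensional integral manifold on which the independence condition holds. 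By the first step this manifold is the $1$-jet graph of a solution $f$ with $\mathrm{d}f(y)=p_0$, so $H=f^{-1}(f(y))$ has the desired conormal at $y$ and conormal in $\Sigma_N$ along all of $H$. The ``moreover'' is the accompanying Cartan--K\"ahler count: the freedom in extending a $k$-dimensional integral manifold of $\mathscr{E}(\Sigma_N)$ to a $(k{+}1)$-dimensional one is governed by its Cartan character $s'_{k+1}$, so the admissible hypersurfaces $H$ through $y$ with fixed conormal $\xi_0$ are parameterized by the Cartan characters of $\mathscr{E}(\Sigma_N)$ in the standard way.

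The hard part is the regularity of the initial data rather than the formal deduction: Cartan--K\"ahler requires $(y,z_0,p_0)$ to lie on a K\"ahler-ordinary stratum of $\mathscr{E}(\Sigma_N)$, and this is delicate exactly when $\xi_0$ sits on the singular locus of the projective variety $\Sigma_{N,y}$. I expect to handle this by first treating a generic $\xi_0$ — where the stratum condition is automatic from the involutivity hypothesis — and then recovering the general case by a closure argument, which is permitted because the conclusion asks only for ``at least one'' such hypersurface. A secondary caveat worth flagging is the dependence on analyticity: for a genuine system of eikonal equations (when $\Sigma$ has fiber codimension greater than one) there is no $C^\infty$ method of characteristics, so the lemma is to be read over the analytic category, consistently with its downstream use in Corollary~\ref{proelemC} and the foliation theorems.
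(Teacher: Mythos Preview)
The paper does not actually prove this lemma; it is stated as a ``template'' result that follows directly from the definition of the eikonal system together with the Cartan--K\"ahler theorem, and the paper immediately moves on to establishing the involutivity hypothesis in the specific cases $\Sigma=S^\perp$, $\Sigma=\Xi$, and $\Sigma=\pair{\Xi}$. Your proposal correctly supplies the argument the paper leaves implicit: integral manifolds of $\mathscr{E}(\Sigma_N)$ satisfying the independence condition are exactly $1$-jet graphs of functions $f$ with $[\mathrm{d}f]\in\Sigma_N$, the desired hypersurfaces are regular level sets of such $f$, and Cartan--K\"ahler on the (assumed) involutive system $\mathscr{E}(\Sigma_N)$ produces a solution with prescribed $1$-jet $(y,z_0,p_0)$ representing $\xi_0$. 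The parameter count by Cartan characters is likewise the standard Cartan--K\"ahler bookkeeping. Your caveats about K\"ahler-ordinariness at singular $\xi_0$ and about analyticity are appropriate and are consistent with how the paper uses the lemma downstream (e.g.\ Corollary~\ref{nicecoords} invokes it only at generic smooth points of $\Xi_N$, and Corollary~\ref{proelemC} is stated in the analytic category).
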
 

For various projective varieties $\Sigma$ that one might choose to study,
establishing the involutivity of $\mathscr{E}(\Sigma_N)$ may be of wildly varying
difficulty.
In the case $\Sigma=S^\perp$, the theorem is nearly trivial:
\begin{thm}
For any ordinary integral manifold $N$, the eikonal system of restricted Cauchy
retractions, $\mathscr{E}(S^\perp_N)$, is involutive with Cartan characters
$s_1=s_2=\cdots=s_\nu=1$.
\label{intcauchy}
\end{thm}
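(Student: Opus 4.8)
The plan is to pass to coordinates adapted to the Cauchy foliation on $N$, in which $\mathscr{E}(S^\perp_N)$ becomes the tautological contact system of a jet space with only $\nu$ active independent variables; then involutivity and the Cartan characters can be read off a single structure equation.

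First I would recall the classical fact that every Cauchy retraction lies inside every maximal integral element: if $v \in \mathfrak{g}_x$ and $e \in \Var_n(\mathcal{I})_x$, then $v \lhk \varphi \in \mathcal{I}$ for all $\varphi \in \mathcal{I}$, so $e + \pair{v}$ is again an integral element, and maximality of $n$ forces $v \in e$. Hence $\mathfrak{g}_x \subseteq e$ for every $e \in M^{(1)}$; in particular $\dim \mathfrak{g}_x = n-\nu$ is locally constant (which is what makes $S$ and $S^\perp$ genuine subbundles), $S_e = \mathbb{P}\mathfrak{g}_x \otimes \mathbb{C}$, and $S^\perp_e = \Ann_{e^*}(\mathfrak{g}_x)\otimes\mathbb{C}$. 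Working locally so that $\iota\colon N \to M$ is an embedding, this shows $\mathfrak{g}$ is tangent to $N$; since $\mathfrak{g}$ is an integrable distribution on $M$ (equivalently, $\mathfrak{g}^\perp$ generates a Frobenius ideal, as recalled in Section~\ref{notation}), the $\mathfrak{g}$-leaves through points of $N$ stay in $N$, so the rank-$(n-\nu)$ distribution $G := TN \cap \mathfrak{g}$ is integrable on $N$ and $S^\perp_N = \Ann(G)\otimes\mathbb{C}$ is a Frobenius Pfaffian system of rank $\nu$.

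Next I would invoke the Frobenius theorem to obtain (complex-valued) local coordinates $y^1,\dots,y^n$ on $N$ with $S^\perp_N = \pair{\mathrm{d}y^1,\dots,\mathrm{d}y^\nu}$. Then $\Sigma_N = S^\perp_N$ carries fibre coordinates $p_1,\dots,p_\nu$ — the remaining tautological momenta vanishing identically on $\Sigma_N$ — the inclusion $\psi$ pulls the contact form back to $\psi^*\Upsilon = \mathrm{d}z - \sum_{\mu=1}^{\nu} p_\mu\,\mathrm{d}y^\mu$, and in the coframe $(\psi^*\Upsilon,\,\mathrm{d}y^1,\dots,\mathrm{d}y^n,\,\mathrm{d}p_1,\dots,\mathrm{d}p_\nu)$ on $\Sigma_N\times\mathbb{R}$ one finds the single structure equation
\begin{equation}
\mathrm{d}(\psi^*\Upsilon) \;\equiv\; -\sum_{\mu=1}^{\nu}\mathrm{d}p_\mu\wedge\mathrm{d}y^\mu \pmod{\psi^*\Upsilon}.
\label{eik-cauchy-struct}
\end{equation}
From here everything is mechanical: there is no torsion to absorb (the right-hand side has no residual $\mathrm{d}y^k\wedge\mathrm{d}y^l$ term, precisely because each $\mathrm{d}y^\mu$ is closed), the tableau is the span of $\{\mathrm{d}p_1,\dots,\mathrm{d}p_\nu\}$ and so has Cartan characters $s_1=\cdots=s_\nu=1$ and $s_{\nu+1}=\cdots=s_n=0$, and its prolongation is $S^2(\mathbb{C}^\nu)^*$, of dimension $\binom{\nu+1}{2} = \sum_k k\,s_k$, so Cartan's test is satisfied and $\mathscr{E}(S^\perp_N)$ is involutive with the stated characters.

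I do not expect a real obstacle — this is exactly why the statement is ``nearly trivial.'' The only genuine inputs are the standard fact that Cauchy retractions sit inside maximal integral elements and the Frobenius theorem applied to $\mathfrak{g}$; after that, the eikonal system is literally the obvious contact system, with no torsion and a constant tableau. The two points deserving a sentence of care are the local constancy of $\dim \mathfrak{g}_x$ over $M^{(1)}$ (needed for $S$ and $S^\perp$ to be subbundles) and the fact that the whole discussion takes place after complexification, which is harmless since $S^\perp$ is the complexification of the real bundle $\Ann(\mathfrak{g})$ and the eikonal form is linear in the fibre coordinates.
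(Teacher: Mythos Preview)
Your proposal is correct and follows essentially the same route as the paper: pull back the Frobenius system $\mathfrak{g}^\perp$ to $N$, choose Frobenius-adapted coordinates so that $S^\perp_N=\pair{\mathrm{d}y^1,\ldots,\mathrm{d}y^\nu}$, and read off the single structure equation $\mathrm{d}(\psi^*\Upsilon)\equiv -\sum_{\mu\leq\nu}\mathrm{d}p_\mu\wedge\mathrm{d}y^\mu$. You supply more detail than the paper does---the inclusion $\mathfrak{g}_x\subset e$ for maximal integral elements, the explicit verification of Cartan's test via $\dim A^{(1)}=\binom{\nu+1}{2}$, and the remarks on local constancy of $\dim\mathfrak{g}$ and complexification---but the underlying argument is identical.
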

\begin{proof}
The Cauchy retractions $S \subset TM$ are closed under bracket, so
they form an integrable distribution.  That is, $\mathfrak{g}^\perp \subset
T^*M$ is a Frobenius system on $M$.  Therefore, for any integral manifold $\iota:N
\to M$ of $(M,\mathcal{I})$, we have that
$S^\perp_N=\iota^*(\mathfrak{g}^\perp)$ is
a Frobenius system as well.   Therefore, we may choose coordinates
$(y^1,\ldots, y^n)$ on $N$ such that $S^\perp_N \subset T^*N$ is
the span of $\mathrm{d}y^1, \mathrm{d}y^2, \ldots, \mathrm{d}y^\nu$.  In other
words, $\varphi = p_k\mathrm{d}y^k$ is in $S^\perp_N$ if and only of
$p_{\nu+1}=\cdots=p_n=0$, so $S^\perp_N$ is defined by these $n-\nu$
functions, and $TS^\perp_N$ is defined by 
$\mathrm{d}p_{\nu+1}=\cdots=\mathrm{d}p_n=0$. Therefore, the eikonal system 
has generating 2-form   
\begin{equation}
\psi^*(\mathrm{d}\Upsilon)
= - \mathrm{d}p_1\wedge\mathrm{d}y^1 - \cdots -
\mathrm{d}p_\nu\wedge\mathrm{d}y^\nu.
\end{equation}
This is involutive with Cartan characters $s_1=s_2=\cdots=s_\nu=1$.
\end{proof}

In the case $\Sigma=\Xi$, the theorem is very deep and difficult.  It is known as
``the integrability\footnote{Properly, it ought to be called the
\emph{involutivity} of characteristics, since the characteristic hypersurfaces
are unique only in the case that $\mathcal{I}$ has Cartan integer $\ell =1$.}   
 of characteristics,'' as summarized in Theorem~\ref{intchar}.
\begin{thm}[Guillemin--Quillen--Sternberg, Gabber]
For any ordinary integral manifold $N$ of an involutive exterior differential
system $(M,\mathcal{I})$, the eikonal system of the characteristic
variety, $\mathscr{E}(\Xi_N)$, is involutive.  At smooth points
in $\Xi \times \mathbb{R}$, the  Cartan characters are $s_1=s_2=\cdots=s_\ell
=1$.  
\label{intchar}
\end{thm}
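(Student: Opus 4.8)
The plan is to deduce Theorem~\ref{intchar} from the classical \emph{integrability of the characteristic variety}: the homogeneous ideal cutting $\Xi$ out of the cotangent bundle becomes, after passing to its radical, closed under the canonical Poisson bracket. I would present the argument as a reduction of the Pfaffian/Cartan assertion to this bracket statement, followed by a recollection of why the bracket statement holds when $(M,\mathcal{I})$ is involutive, exactly paralleling the trivial $\Sigma = S^\perp$ case handled in Theorem~\ref{intcauchy}.

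First, the reduction. Fix a smooth point of $\Xi_N\times\mathbb{R}$, a coordinate chart $(y^k)$ on $N$, and homogeneous polynomials $F^1(y,p),\dots,F^q(y,p)$ in the fibre coordinates $p_k$ of $T^*N$ whose common zero locus is $\Xi_N$ there, with $q$ the codimension $n-\ell$. Pulling back $\Upsilon = \mathrm{d}z - p_k\,\mathrm{d}y^k$, the eikonal system $\mathscr{E}(\Xi_N)$ is the rank-one Pfaffian system on $\{F^\lambda=0\}\times\mathbb{R}\subset\mathbb{J}^1(N,\mathbb{R})$ generated by $\psi^*(\Upsilon)$. The Hamiltonian vector fields $X_{F^\lambda}$ on $T^*N$ are its bicharacteristic fields, and the classical Hamilton--Jacobi theory of first-order systems in involution says that $\mathscr{E}(\Xi_N)$ is involutive with Cartan characters $s_1=\cdots=s_\ell=1$ (and $s_{\ell+1}=\cdots=0$) precisely when
\begin{equation}
\{F^\lambda, F^\mu\}\ \in\ \sqrt{I(\Xi_N)}\qquad\text{for all }\lambda,\mu=1,\dots,q .
\label{eq:poisson}
\end{equation}
Indeed, given \eqref{eq:poisson} the $q=n-\ell$ bicharacteristic flows commute modulo $I(\Xi_N)$ and are tangent to the $(n+\ell)$-dimensional variety $\Xi_N\subset T^*N$; flowing out an arbitrary Cauchy datum prescribed on an $\ell$-dimensional initial submanifold produces an $n$-dimensional Legendrian integral manifold of $\psi^*(\Upsilon)$, hence the characteristic hypersurfaces of Lemma~\ref{genint}, and the count of Cartan characters is routine eikonal bookkeeping. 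Conversely, if $\mathscr{E}(\Xi_N)$ is involutive the same bicharacteristic analysis forces \eqref{eq:poisson}. So Theorem~\ref{intchar} is equivalent to \eqref{eq:poisson}.

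Next, \eqref{eq:poisson} from involutivity of $(M,\mathcal{I})$. I would recover it from Guillemin normal form (Section~\ref{gnf}) together with the commutativity theorem of \cite{Guillemin1968}. Along $N$, with a generic adapted coframe and the index split $k=(\lambda,\varrho)$, $\lambda=1,\dots,\ell$, write the reduced symbol relations \eqref{symrels}; then $\Xi_N$ is the joint spectrum of the commuting family $\{B(\varphi)(v)|_{\Wu^1(\varphi)}\}$ built from \eqref{IB}, so the $F^\lambda$ may be taken to be coefficients of the characteristic polynomial of this family evaluated at $\xi$. Differentiating the prolonged structure equations \eqref{proI} and using $\mathrm{d}^2=0$ expresses the derivatives of the $B$-tensor in terms of torsion that vanishes by the involutivity hypothesis (Cartan's test). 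Substituting into $\{F^\lambda,F^\mu\}=\sum_k\big(\partial_{p_k}F^\lambda\,\partial_{y^k}F^\mu-\partial_{y^k}F^\lambda\,\partial_{p_k}F^\mu\big)$, rewriting the derivatives as traces of products of $B$'s against $\partial B$, and symmetrizing those traces by commutativity, the bracket collapses into the ideal generated by the $F^\mu$ along $\Xi_N$, which is \eqref{eq:poisson}.

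The genuinely hard part --- the reason the theorem is ``very deep'' rather than a routine computation --- is making this last step valid at \emph{singular} points of $\Xi$, which amounts to controlling the passage from the explicit ideal of subdeterminants defining $\Xi$ to its radical. The trace computation above is in essence the Guillemin--Quillen--Sternberg argument and is clean when $\Xi$ is smooth and reduced (a smooth hypersurface, or under a transversality hypothesis); Gabber's contribution supplies the general case by a purely algebraic device --- a commutator estimate in the Rees algebra of the symbol filtration --- yielding Poisson closure of $\sqrt{I(\Xi)}$ with no genericity assumption, but saying nothing about the fibre dimension of $\mathscr{E}(\Xi_N)$ over the singular strata. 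That is exactly why the Cartan characters $s_1=\cdots=s_\ell=1$ are asserted only at smooth points of $\Xi\times\mathbb{R}$. In the write-up I would give the reduction to \eqref{eq:poisson} in full, since that is the part actually needed for the $\Sigma=\Xi$ case of Lemma~\ref{genint}, and cite \cite{Guillemin1970}, \cite{Gabber1981}, and \cite{Malgrangea} for \eqref{eq:poisson} itself.
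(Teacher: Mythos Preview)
The paper does not give its own proof of Theorem~\ref{intchar}: it is stated with attribution (Guillemin--Quillen--Sternberg, Gabber) and followed immediately by a paragraph of citations to \cite{Cartan1911}, \cite{Guillemin1970}, \cite{Gabber1981}, \cite{Malgrangea}, and Chapter~V of \cite{BCGGG}, with no argument offered. Your proposal therefore cannot be compared to a proof in the paper; rather, you have sketched what is essentially the content of those citations.

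That said, your outline is accurate in broad strokes. The reduction of involutivity of $\mathscr{E}(\Xi_N)$ to Poisson closure of the radical characteristic ideal is exactly how Chapter~V of \cite{BCGGG} frames the problem, and your identification of the two ingredients---the commutativity of the Guillemin endomorphisms (here Theorem~\ref{thm-gnf1}) driving the smooth case \`a la \cite{Guillemin1970}, and Gabber's filtered-algebra argument \cite{Gabber1981} removing the smoothness hypothesis---is correct. Your middle paragraph, where the Poisson bracket ``collapses into the ideal'' via trace identities, is too compressed to stand as a proof on its own: the actual computation in \cite{Guillemin1970} or \cite{BCGGG} is delicate and requires tracking exactly which torsion terms vanish from involutivity and which merely lie in lower filtration degree. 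Your stated plan---give the reduction to \eqref{eq:poisson} in full and cite the literature for \eqref{eq:poisson} itself---is sound and matches how the paper treats the result; if you intend instead to reprove \eqref{eq:poisson}, that middle paragraph needs substantial expansion.
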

Cartan showed many examples of Theorem~\ref{intchar} in \cite{Cartan1911} and
probably at a 1911 lecture ``Sur les caract\'eristiques de certains syst\'emes
d'\'equations aux d\'eriv\'ees partielles'' whose abstract appears immediately
after \cite{Cartan1911} in Volume 2 of his collected works.
The first complete proof in the PDE case appears in \cite{Guillemin1970}, and a
general algebraic proof appears in \cite{Gabber1981}.  Reexaminations of these
proofs appear in \cite{Malgrangea} and Chapter V of \cite{BCGGG}.

For our present purposes, we are concerned with the case of
$\mathscr{E}(\pair{\Xi}_N)$, which one expects to lie neatly between the easy
case of $\mathscr{E}(S^\perp_N)$ and the difficult case of
$\mathscr{E}(\Xi_N)$.    We avoid proving involutivity from scratch, instead
using the difficult case as a crutch, with the following lemma.

\begin{lemma}
If $\mathscr{E}(\Sigma_N)$ is involutive, then $\mathscr{E}(\pair{\Sigma}_N)$ is
involutive. 
\label{spaninv}
\end{lemma}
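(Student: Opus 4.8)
The plan is to exhibit the eikonal system $\mathscr{E}(\pair{\Sigma}_N)$ as, up to coframe change, a subsystem of the eikonal system $\mathscr{E}(\Sigma_N)$ obtained by a linear-algebraic restriction of the base fibers, and then to transfer involutivity along that restriction. The key observation is that $\pair{\Sigma}_e$ is a \emph{linear} subbundle of $V^*$, so $\pair{\Sigma}_N \subset T^*N$ is cut out (in suitable local coordinates $(y^1,\dots,y^n)$ on $N$, after the coframe adaptation of \eqref{resindex}) by the $n-L$ linear equations $p_{L+1}=\cdots=p_n=0$. Hence $\pair{\Sigma}_N \times \mathbb{R}$ carries the contact form $\psi^*(\Upsilon)=\mathrm{d}z - p_i\,\mathrm{d}y^i$ ($i=1,\dots,L$), with generating $2$-form $-\mathrm{d}p_i\wedge\mathrm{d}y^i$, and the independence condition is $\mathrm{d}y^1\wedge\cdots\wedge\mathrm{d}y^n$. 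Meanwhile $\Sigma_N$ sits inside $\pair{\Sigma}_N$ fiberwise (by definition of the span), so there is an inclusion $\Sigma_N \times \mathbb{R} \hookrightarrow \pair{\Sigma}_N \times \mathbb{R}$ compatible with the contact forms, and both eikonal systems are built from the same $\Upsilon$ restricted to nested submanifolds of $T^*N\times\mathbb{R}$.

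First I would set up the coordinates as above and write both eikonal $2$-forms explicitly. Next I would compute the tableau of $\mathscr{E}(\pair{\Sigma}_N)$: since $\pair{\Sigma}_N$ is linear in the fiber, the restricted symbol is the full "rank-one" symbol of a first-order scalar equation, whose characters are $s_1=\cdots=s_L=1$ — this system is the model involutive Pfaffian system of a single function of $L$ variables, and its involutivity is elementary (it is the same computation as in the proof of Theorem~\ref{intcauchy}, with $\nu$ replaced by $L$). In fact, this already shows $\mathscr{E}(\pair{\Sigma}_N)$ is involutive \emph{outright}, without reference to $\mathscr{E}(\Sigma_N)$ at all: the eikonal system of a \emph{linear} subbundle of $V^*$ is always the trivial restricted contact system, which is Frobenius-free and involutive for purely formal reasons. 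So the hypothesis that $\mathscr{E}(\Sigma_N)$ be involutive is strictly speaking not needed for this direction; the honest content of the lemma is that the $L$ appearing here — the fiber dimension of $\pair{\Sigma}_N$ — is the "right" one, i.e. that passing to the span does not destroy the integral-manifold conclusion of Lemma~\ref{genint}. I would therefore phrase the proof so as to record both facts: (a) $\mathscr{E}(\pair{\Sigma}_N)$ is involutive with characters $s_1=\cdots=s_L=1$, by the direct computation; and (b) every $\xi_0 \in \Sigma_{N,y}$, lying a fortiori in $\pair{\Sigma}_{N,y}$, is realized by a hypersurface of the type produced by Lemma~\ref{genint} applied to $\pair{\Sigma}$, so that the span-level eikonal system genuinely "sees" all the characteristic directions that the $\Sigma$-level system does.

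The main obstacle — and the only place where the involutivity of $\mathscr{E}(\Sigma_N)$ might actually be invoked — is making precise the claim that $\pair{\Sigma}_N$ really is cut out by linear equations $p_{L+1}=\cdots=p_n=0$ in coordinates valid on a \emph{neighborhood in $N$}, not merely at a point. The span $\pair{\Sigma}_e$ is a subbundle of $V^*$ of locally constant rank $L$ (this is part of the running hypotheses: $L-1 = \dim\pair{\Xi}_e$ is locally constant on $M^{(1)}$, and the same holds for a general $\Sigma$ with locally-constant fiber variety), so the subbundle $\pair{\Sigma}_N\subset T^*N$ is smooth of constant rank, and one may choose the coordinates $y^k$ on $N$ adapted to it — here I would cite the constancy hypothesis and, if $\Sigma$ is only assumed to have projective-variety fibers without a constancy assumption, restrict to the open dense locus where $\dim\pair{\Sigma}_e$ is locally constant, which suffices for the intended application $\Sigma=\Xi$. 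With that adaptation in hand the generating $2$-form is the constant-coefficient form displayed above and involutivity is immediate. I expect the write-up to be short: essentially the proof of Theorem~\ref{intcauchy} verbatim with $S^\perp$ replaced by $\pair{\Sigma}_N$ and $\nu$ replaced by $L$, plus the one sentence identifying $\pair{\Sigma}_N$ as a linear subbundle.
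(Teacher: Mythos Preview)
There is a genuine gap in your argument, and it lies exactly at the point you yourself flag as ``the main obstacle.'' You assert that because $\pair{\Sigma}_N \subset T^*N$ is a smooth subbundle of constant rank $L$, ``one may choose the coordinates $y^k$ on $N$ adapted to it'' so that $\pair{\Sigma}_N$ is cut out by $p_{L+1}=\cdots=p_n=0$. But constant rank is \emph{not} sufficient for this: writing $\pair{\Sigma}_N = \mathrm{span}(\mathrm{d}y^1,\ldots,\mathrm{d}y^L)$ in local coordinates on $N$ is equivalent to the Frobenius integrability of the annihilator distribution $\pair{\Sigma}_N^\perp \subset TN$, which is a nontrivial condition. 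For a counterexample, take $N=\mathbb{R}^3$ and the rank-one subbundle spanned by $\mathrm{d}x - y\,\mathrm{d}t$; its annihilator is the standard contact distribution, which is maximally non-integrable, and the corresponding eikonal system has unabsorbable torsion $p\,\mathrm{d}y\wedge\mathrm{d}t$ and admits only constant solutions. So your claim that ``the eikonal system of a \emph{linear} subbundle of $V^*$ is always \ldots\ involutive for purely formal reasons'' is false, and the hypothesis on $\mathscr{E}(\Sigma_N)$ is not superfluous.

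The paper's proof uses the involutivity of $\mathscr{E}(\Sigma_N)$ precisely to supply this missing integrability. One picks $L$ linearly independent points $\xi^1_0,\ldots,\xi^L_0 \in \Sigma_{N,y}$; involutivity of $\mathscr{E}(\Sigma_N)$ (via Lemma~\ref{genint}) extends each to a local section $\xi^i$ of $\Sigma_N$ satisfying $\mathrm{d}\xi^i\equiv 0 \bmod \xi^i$, hence $\xi^i$ is proportional to an exact form $\mathrm{d}y^i$. These $L$ independent exact sections span $\pair{\Sigma}_N$, which \emph{proves} that $\pair{\Sigma}_N^\perp$ is Frobenius. Only then is the computation you sketch---identical to that of Theorem~\ref{intcauchy} with $\nu$ replaced by $L$---legitimate. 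Your instinct to copy that proof is right, but the first sentence of Theorem~\ref{intcauchy}'s proof (``$S^\perp_N$ is Frobenius'') is doing real work there, and its analogue for $\pair{\Sigma}_N$ requires the hypothesis you tried to discard.
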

\begin{proof}
Since we are only concerned with the case $\Sigma=\Xi$, we use notation
consistent with Section~\ref{context}, but no peculiar properties of the
characteristic variety are used.   Let $\Sigma_e$, $\pair{\Sigma}_e$, and $N$ have 
dimension $\ell$, $L$, and $n$ respectively, and recall the index ranges
reserved in Equation~\eqref{resindex}.

Since $\dim\pair{\Sigma}_{N,y}=L$, we may choose   
linearly independent $\xi^1_0, \ldots, \xi^L_0 \in \Sigma_y$ and also (because
$\mathscr{E}(\Sigma_N)$ is involutive) local
extensions $\xi^1, \ldots, \xi^L \in \Sigma_N$ such that $\mathrm{d}\xi^i
\equiv 0 \mod \xi^i$ for each $i=1,\ldots,L$.  That is, we choose $L$
linearly independent characteristic hypersurfaces defined by local
functions $y^i:N \to \mathbb{R}$ such that $\mathrm{d}y^i =
\xi^i$.
Complete $(y^1,\ldots,y^L)$ to a local coordinate system $(y^1,\ldots,y^n)$ on $N$, and
let $(p_1,\ldots,p_n)$ be the
corresponding symplectic coordinates on $T^*_yN$.  Note that completing the coordinate
system is possible because $\mathscr{E}(\mathbb{P}T^*N)$ is itself trivially involutive.

In our chosen coordinates, $\pair{\Sigma}_{N,y}$ is merely the subspace of $T^*_yN$ defined
by the $n{-}L$ functions $0=p_\alpha$, so $T\pair{\Sigma}_N$ is defined by
$\mathrm{d}p_\alpha=0$.  Therefore, the eikonal system of $\pair{\Sigma}_N$ 
has generating 2-form
\begin{equation}
\begin{split}
\mathrm{d}\Upsilon &= 
-\mathrm{d}p_i \wedge \mathrm{d}y^i - \mathrm{d}p_\alpha \wedge \mathrm{d}y^\alpha \\
&\equiv 
-\mathrm{d}p_i \wedge \mathrm{d}y^i  \mod \{ \mathrm{d}p_\alpha \}
\end{split}\end{equation}
This is involutive with Cartan characters $s_1=s_2=\cdots=s_L=1$.
\end{proof}

Using Lemma~\ref{genint} for $\Xi_N$, $\pair{\Xi}_N$, and $S^\perp_N$
sequentially to build a full coordinate system, we obtain:
\begin{cor}
Suppose that $(M,\mathcal{I})$ is an involutive exterior differential system,
and that $N$ is a maximal ordinary integral manifold.
Then $N$ admits a coordinate system $(y^1, \ldots, y^n)$ such that 
$\mathrm{d}y^1,\ldots,\mathrm{d}y^\ell \in \Xi_N$, such that 
$\mathrm{d}y^1,\ldots,\mathrm{d}y^L \in \pair{\Xi}_N$ 
and such that $\mathrm{d}y^1,\ldots,\mathrm{d}y^\nu \in
S^\perp_N$.  For generic smooth points in $\Xi_N$, the choice of such coordinates depends on $\ell$ functions of $\ell$ variables, $L-\ell$ functions of $L$ variables, $\nu-L$
functions of $\nu$ variables, and $n-\nu$ functions of $n$ variables.
\label{nicecoords} 
\end{cor}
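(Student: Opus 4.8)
The plan is to assemble the coordinate system one function at a time, applying Lemma~\ref{genint} successively to the three nested sub-bundles $\Xi_N \subset \pair{\Xi}_N \subset S^\perp_N$ of $\mathbb{P}T^*N$ (the inclusions being $\Xi \subset \pair{\Xi}$ tautologically and $\pair{\Xi}\subset S^\perp$ as noted in Section~\ref{notation}), each of which has an involutive eikonal system: $\mathscr{E}(\Xi_N)$ by Theorem~\ref{intchar}, $\mathscr{E}(\pair{\Xi}_N)$ by Lemma~\ref{spaninv} applied to $\Sigma=\Xi$, and $\mathscr{E}(S^\perp_N)$ by Theorem~\ref{intcauchy}. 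The Cartan characters recorded in those three results --- all equal to $1$, in counts $\ell$, $L$, $\nu$ respectively --- together with the parameterization clause of Lemma~\ref{genint} will produce the stated function counts.

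Concretely, I would first fix $y \in N$ and choose $\xi^1_0,\ldots,\xi^\ell_0 \in \Xi_{N,y}$ that are linearly independent and lie at smooth points of $\Xi$. These exist because the smooth locus of $\Xi_{N,y}$ is dense, hence spans $\pair{\Xi}_{N,y}$, which has dimension $L \geq \ell$. For each $\lambda$, Lemma~\ref{genint} gives a hypersurface through $y$ whose conormal lies in $\Xi_N$ at every point; writing $y^\lambda$ for its defining function (normalized so $y^\lambda(y)=0$), we get $\mathrm{d}y^\lambda \in \Xi_N$ with $\mathrm{d}y^\lambda|_y = \xi^\lambda_0$. Since the $\xi^\lambda_0$ are independent at $y$, the $y^\lambda$ form independent coordinates near $y$; and by Lemma~\ref{genint} each such hypersurface is parameterized by the Cartan characters of $\mathscr{E}(\Xi_N)$, which at a smooth point of $\Xi$ are $s_1=\cdots=s_\ell=1$, i.e.\ one function of $\ell$ variables --- hence $\ell$ functions of $\ell$ variables for a generic choice.

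Now I repeat the step with $\pair{\Xi}_N$ and then with $S^\perp_N$. Since $\dim \pair{\Xi}_{N,y}=L$, the family $\xi^1_0,\ldots,\xi^\ell_0$ extends to an independent family $\xi^1_0,\ldots,\xi^L_0$ with the new covectors in $\pair{\Xi}_{N,y}$, and Lemma~\ref{genint} (using involutivity of $\mathscr{E}(\pair{\Xi}_N)$ with Cartan characters $s_1=\cdots=s_L=1$) yields $y^{\ell+1},\ldots,y^L$ with $\mathrm{d}y^i \in \pair{\Xi}_N$, each a choice of one function of $L$ variables, so $L-\ell$ functions of $L$ variables. Likewise, extending the family by covectors in $S^\perp_{N,y}$ (possible since $\dim S^\perp_{N,y}=\nu$) and invoking the Cartan characters $s_1=\cdots=s_\nu=1$ of $\mathscr{E}(S^\perp_N)$ gives $y^{L+1},\ldots,y^\nu$ with $\mathrm{d}y^i\in S^\perp_N$, contributing $\nu-L$ functions of $\nu$ variables. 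Finally I complete $(y^1,\ldots,y^\nu)$ to a full chart by $n-\nu$ arbitrary independent functions $y^{\nu+1},\ldots,y^n$ --- equivalently by the trivially involutive $\mathscr{E}(\mathbb{P}T^*N)$ with $s_1=\cdots=s_n=1$ --- accounting for the remaining $n-\nu$ functions of $n$ variables.

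The only delicate point, and where I expect the one bit of friction, is the interplay between smoothness and the parameter count: Theorem~\ref{intchar} only guarantees the Cartan characters $1,\ldots,1$ at smooth points of $\Xi\times\mathbb{R}$, so one must verify both that $\ell$ linearly independent smooth points of $\Xi_{N,y}$ exist and that they --- and the later extensions $\xi^{\ell+1}_0,\ldots$ --- can be taken in sufficiently general position for the counting statement to be exact. That caveat is precisely what the phrase ``generic smooth points in $\Xi_N$'' in the statement records; everything else is the routine fact that independence of the $\mathrm{d}y^k|_y$ propagates to a neighborhood, so the functions produced really do fit together into a single coordinate system.
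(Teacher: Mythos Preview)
Your argument is correct and follows exactly the route the paper indicates: apply Lemma~\ref{genint} in turn to $\Xi_N$, $\pair{\Xi}_N$, and $S^\perp_N$ (whose eikonal systems are involutive by Theorem~\ref{intchar}, Lemma~\ref{spaninv}, and Theorem~\ref{intcauchy} respectively), then complete to a full chart, reading off the function counts from the Cartan characters. The paper states this as a one-line remark preceding the corollary; you have simply unpacked it, including the smoothness caveat that justifies the phrase ``generic smooth points.''
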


\section{Guillemin normal form}\label{gnf} 
Guillemin normal form of the tableau $A$ plays an essential role in the proofs
of the main theorem.  A comment on our approach:  The literature contains two
notable versions of Guillemin normal form.  The first, seen in
\cite{Guillemin1968, Guillemin1970} and discussed Chapter VIII \S6 of
\cite{BCGGG}, is essentially coordinate-free and implies commutativity of the
symbol maps on certain non-characteristic subspaces of $V$
using a linear projection of the characteristic variety.  The second is the
iterative method described in Section 1.1 of \cite{Yang1987}, which explicitly
uses a chosen coframe of $V$, but allows one to state a commutativity
condition on both characteristic and non-characteristic subspaces. 
To state the results most elegantly, and to identify some subtleties, we use
the mixture of these two perspectives that is developed in \cite{Smith2014a}.
See that article for further discussion of the lemmas in this section.


For any $e \in M^{(1)}$, consider the projective space $V^*_e=\mathbb{P}e^*$ of
dimension $n{-}1$.  
Let $X^*_e \subset V^*_e$ be a linear subspace of dimension $n{-}L{-}1$ such
that $\pair{\Xi}_e \cap X^*_e=\emptyset$.  Similarly, we
may choose a linear subspace $Y^*_e \subset \pair{\Xi}_e$ of dimension
$L{-}\ell{-}1$ such that $Y^*_e \cap \Xi_e = \emptyset$.   If $L=\ell$, 
we allow $Y^*_e = \emptyset$.  Let $U^*_e \subset
\pair{\Xi}_e$ be a linear subspace of dimension $\ell{-}1$ such that $U^*_e \cap Y^*_e  =
\emptyset$.    So, $V^*_e$ decomposes\footnote{Here, we are
using $Y^*_e \oplus X^*_e$ as a particularly nice example of a maximal
non-intersecting subspace, which would be called $\Omega$ on Page 379 (Page 324
in the online edition) of \cite{BCGGG}.
While the particular choice of $X^*_e$, $Y^*_e$ and $U^*_e$ is not canonical, the
desired lemmas hold for any such decomposition. One could express the linear
projections in a completely invariant manner using additional language from
commutative algebra, but the notation of bases and frames is useful for us.}
as $U^*_e \oplus Y^*_e \oplus X^*_e$.  The notation is meant to be suggestive,
as equating $X^*_e \cong (X^{1}_e)^*$ is equivalent to splitting the exact sequence $\emptyset \to  \pair{\Xi}_e  \to V^*_e \to X^1_e \to \emptyset$ for $X^1_e = \pair{\Xi}_e^\perp$ as in 
Definition~\ref{sat1}.

Let the covectors $u^1, \ldots, u^\ell$ be a basis for $U^*_e$, let
$u^{\ell+1},\ldots, u^L$ be a basis for $Y_e^*$, and let $u^{L+1},
\ldots, u^n$ be a basis for $X^*_e$, so any $\phi \in V^*_e$ can be decomposed as 
\begin{equation}
\phi = \phi_k u^k = \phi_\lambda u^\lambda + \phi_\varrho
u^\varrho = \phi_i u^i + \phi_\alpha u^\alpha
\end{equation}
using the index ranges reserved in Equation~\eqref{resindex}.
Let $(u_k)$ denote the basis of $V_e$ dual to $(u^k)$, so $u_k(\phi) =
\phi_k$ and $u^k(v)=v^k$ for any $v=v^ku_k \in V_e$.

If $\ell < L$, then $\Xi_e \neq \pair{\Xi}_e$, and one may further assume that
$\Xi_e \cap U_e^*=\emptyset$ and $\Xi_e \cap Y_e^*=\emptyset$, in which case
this basis is also regular, meaning $u^k \not\in \Xi_e$ for all $k$.
However, if $\ell = L$, then $\Xi_e =\pair{\Xi}_e$ as sets, so $Y^*_e =
\emptyset$ and $E^*_e = \pair{\Xi}_e = \Xi_e$.  It is therefore impossible for
this basis to be regular.  There are two ways of proceeding: either perturb
$U^*_e$ by a small angle to be non-characteristic, or take care that the
desired lemmas require genericity but not regularity.  We take the latter approach.

For a dense open subset of the bases $(w_a)$ of $W_e$, the generators of $A_e$ appear
in the first $s_1$ entries of column $1$, the first $s_2$ entries of columns
$2$, et cetera, of the matrix $\pi=\pi^a_k (w_a \otimes u^k)$, so the symbol
relations take the form of Equation~\eqref{symrels}.
Recall that the symbol coefficients define a map
\begin{equation}
B(\varphi)(v): z \mapsto  
\sum_{a \leq s_k} w_a \delta^\lambda_k \delta^a_b z^b v^k \varphi_\lambda +   
\sum_{a>s_k} w_a B^{a,\lambda}_{k,b} z^b v^k \varphi_\lambda.
\tag{\ref{IB} \emph{bis}}
\end{equation}

\begin{lemma}
If $\xi \in \Xi_e$, $v \in V_e$, and $z \in \ker \sigma_\xi \subset
W_e$, then
\begin{equation}
B(\xi)(v)z = \xi(v) z.
\label{eigen}\end{equation}\label{lemma-eigen}
\end{lemma}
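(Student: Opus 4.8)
The plan is to unwind the definitions of $B(\xi)(v)$ and $\ker\sigma_\xi$ and show the two summands in \eqref{IB} \emph{bis} collapse exactly to $\xi(v)z$. Recall that $z \in \ker\sigma_\xi$ means $\sigma(z\otimes \xi)=0$, i.e. $z\otimes\xi$ satisfies all the symbol relations; equivalently, in the generic basis, $z\otimes\xi$ lies in $A_e$ and so its "dependent" components $\pi^a_k$ with $a>s_k$ are determined by its "free" components $\pi^b_\lambda$ with $b\le s_\lambda$ via Equation~\eqref{symrels}. Writing $z = z^b w_b$ and $\xi = \xi_k u^k$, the element $z\otimes\xi$ has matrix entries $\pi^a_k = z^a \xi_k$. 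The relation \eqref{symrels} then says $z^a\xi_k = B^{a,\lambda}_{k,b} z^b\xi_\lambda$ whenever $a > s_k$.

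First I would compute the identity summand of \eqref{IB} \emph{bis}: the term $\sum_{a\le s_k} w_a\,\delta^\lambda_k\,\delta^a_b\, z^b v^k \xi_\lambda$ forces $k=\lambda$, $a=b$, and restricts the sum over $b$ to $b\le s_k$; it contributes $\sum_{\lambda}\sum_{b\le s_\lambda} w_b\, z^b v^\lambda \xi_\lambda$. Second, the "structure" summand $\sum_{a>s_k} w_a\, B^{a,\lambda}_{k,b} z^b v^k \xi_\lambda$: using the symbol relation $B^{a,\lambda}_{k,b} z^b \xi_\lambda = z^a\xi_k$ (valid precisely in the range $a>s_k$), this becomes $\sum_{a>s_k} w_a\, z^a \xi_k v^k = \sum_k \xi_k v^k \sum_{a>s_k} w_a z^a$. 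Now I add the two pieces. In the identity summand, after relabeling the summation index, the coefficient of $w_a$ for each $a$ is $z^a \sum_{\lambda:\, s_\lambda \ge a} v^\lambda \xi_\lambda = z^a \sum_{\lambda:\, a\le s_\lambda} v^\lambda\xi_\lambda$; in the structure summand the coefficient of $w_a$ is $z^a\sum_{k:\, s_k<a} v^k\xi_k$. These two index conditions, $a\le s_\lambda$ versus $s_k<a$, are complementary over all $k=1,\dots,n$ (with the convention $s_\varrho=0$ for $\varrho>\ell$, so the second piece also picks up all $k>\ell$), so the combined coefficient of $w_a$ is $z^a\sum_{k} v^k\xi_k = z^a\,\xi(v)$. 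Summing over $a$ gives $B(\xi)(v)z = \xi(v)\, z^a w_a = \xi(v)\,z$.

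The one point requiring care — and the main obstacle to a clean write-up — is the bookkeeping of index ranges: one must check that the genericity of the chosen basis guarantees the symbol relations really do take the minimal form \eqref{symrels} with $B^{a,\lambda}_{k,b}=0$ unless $\lambda<k$, $b\le s_\lambda$, $s_k<a$, and that with the convention $s_\varrho=0$ for $\varrho>\ell$ the two complementary summation conditions partition $\{1,\dots,n\}$ exactly, with no double-counting on the diagonal $a=s_k$ (handled by the strict inequalities $a\le s_k$ vs. $s_k<a$). Once that partition is verified, the cancellation is forced and the identity is immediate; no use of involutivity or of any special property of $\Xi_e$ is needed beyond $z\otimes\xi\in A_e$, which is exactly the hypothesis $z\in\ker\sigma_\xi$. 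I would close by remarking that this is the linear-algebra shadow of "$\xi$ is an eigenvector of the symbol with eigenvalue $\xi(v)$," which motivates the generalized-eigenvector picture used in the sequel.
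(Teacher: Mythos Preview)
Your argument is correct. The paper does not actually supply a proof of this lemma in the body of the text; it is stated and immediately followed by commentary, with the details of the lemmas in Section~\ref{gnf} deferred to the companion article \cite{Smith2014a}. Your direct unwinding of \eqref{IB} via the symbol relations \eqref{symrels}---observing that $z\in\ker\sigma_\xi$ is exactly the condition $z\otimes\xi\in A_e$, so that $B^{a,\lambda}_{k,b}z^b\xi_\lambda=z^a\xi_k$ for $a>s_k$, and then noting that the conditions $a\le s_k$ and $a>s_k$ partition $\{1,\ldots,n\}$ for each fixed $a$---is precisely the computation one expects and is the natural proof. Your closing observation that involutivity plays no role here, only the tableau membership $z\otimes\xi\in A_e$, is also correct and worth keeping.
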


Despite the neatness of Lemma~\ref{lemma-eigen}, we do not really want to deal
with $\Xi_e$ directly; rather, it is
better to deal with $U^*_e \cong \mathbb{P}^{\ell-1}$, noting that the linear
projection $\Xi_e \to U^*_e$ is a finite branched cover.  Thus, every $\varphi
\in U^*_e$ represents some finite number of corresponding $\xi \in \Xi_e$.

\begin{figure}
\begin{center}
\begin{tikzpicture} [scale=0.5]                                                 
\fill[io] (0,0) -- (10, 0) -- (10,-2) -- ( 9,-2) -- ( 9,-2) -- ( 8,-2) -- ( 8,-3) -- ( 7,-3) -- ( 7,-4) -- ( 6,-4) -- ( 6,-4) -- ( 5,-4) -- ( 5,-6) -- ( 4,-6) -- ( 4,-6) -- ( 3,-6) -- ( 3,-8) -- ( 2,-8) -- ( 2,-10) -- ( 1,-10) -- ( 1,-10) -- ( 0,-10) -- cycle;
\draw[very thick]     (0,0) -- (0,-12) -- (14,-12) -- (14,0) -- cycle;
\draw[dotted]     (10,0) -- (10,-12);
\draw (0,-2) node [left=1pt,black] {$s_\ell$};
\draw (0,-10) node [left=1pt,black] {$s_1$};
\draw (0,-6) node [left=1pt,black] {$s_\lambda$};
\draw (0,-3) node [left=1pt,black] {$s_i$};
\draw (0,0) node [above=1pt,black] {$1$};
\draw (3.5,0) node [above=1pt,black] {$\lambda$};
\draw (7.5,0) node [above=1pt,black] {$i$};
\draw (10,0) node [above=1pt,black] {$\ell$};
\draw (14,0) node [above=1pt,black] {$n$};
\draw  (3.5,-3.5) node {\small $\Wu^-_\lambda$};
\draw  (7.5,-2) node {\small $\Wu^-_i$};
\draw  (8.5,-4.5) node[rotate=90] {\small $\Wu^+_i \cap \Wu^-_\lambda$};
\draw  (8.5,-9.5) node[rotate=90] {\small $\Wu^+_i \cap \Wu^+_\lambda$};
\draw[very thick] (7,-6) -- (7,-12) -- (8,-12) -- (8,-6) -- cycle;
\draw[very thick] (3,0) -- (3,-6) -- (4,-6) -- (4,0) -- cycle;
\draw[very thick] (7,0) -- (7,-6) -- (8,-6) -- (8,0) -- cycle;
\draw[dotted] (7,-3) -- (8,-3);
\draw[very thick, ->] (4.0,-3.5) to node[above] {$0$} (6.9,-2);
\draw[very thick, ->] (4.0,-3.5) to node[below] {$B^\lambda_i$ } (6.9,-4.5);
\draw[very thick, ->] (4.0,-3.5) to node[below] {$0$} (6.9,-9);
\end{tikzpicture}
\end{center}
\caption{The map $B^\lambda_i$ for a tableau satisfying condition \ref{endov}
of Theorem~\ref{thm-gnf+}.  See \cite{Smith2014a}.}
\label{figB}
\end{figure}

For each basis element $u^k$ of $V^*_e$, let
\begin{equation}
\begin{split}
\Wu_e^-(u^k) &= \left\{ z =w_a z^a : z^a = 0\ \forall\ a \leq s_k \right\} \\
\Wu_e^+(u^k) &= \left\{ z =w_a z^a : z^a = 0\ \forall\ a > s_k \right\}
\end{split}
\end{equation}
So that $W_e =  \Wu_e^-(u^k) \oplus \Wu_e^+(u^k)$ and $\Wu_e^-(u^1) \supset
\Wu_e^-(u^2) \supset \cdots \supset \Wu_e^-(u^n)$ because $s_1 \geq s_2
\cdots \geq s_n$.  Of course, for $\varrho > \ell$, we have
$\Wu_e^-(u^\varrho) =
\emptyset$.  For each $\lambda$, consider also the subspace
\begin{equation}
\Au_e^-(u^\lambda) = \left\{ \pi = B(u^\lambda)(\cdot)z,\ z \in \Wu_e^-(u^\lambda)\right\} \subset
A_e
\end{equation}
The symbol relations \eqref{symrels} imply that the coefficients $\pi^a_k$ of
$\pi \in \Au_e(u^\lambda)$ are
determined uniquely by the choice of $z \in \Wu_e^-(u^\lambda)$, so
$\Au_e^-(u^\lambda)$ and $\Wu_e^-(u^\lambda)$ are isomorphic
via the projection onto the $u^\lambda$ column. 

Using this basis and isomorphism, there is a decomposition
\begin{equation}
A_e = \bigoplus_{\lambda=1}^\ell \Au_e^-(u^\lambda) \cong \bigoplus_{\lambda=1}^\ell
\Wu_e^-(u^\lambda).
\label{Adecomp}
\end{equation}
Specifically, if $\pi = \pi^a_k (w_a \otimes u^k) \in A_e$, then let 
\begin{equation}
z_\lambda = \sum_a z^a_\lambda w_a \in W,~\text{for}~ 
z^a_\lambda = 
\begin{cases}
\pi^a_\lambda,& a \leq s_\lambda\ \&\ \lambda \leq \ell\\
0, & \text{otherwise}.
\end{cases}
\label{zdecomp}
\end{equation}
So, the decomposition~\eqref{Adecomp} yields
\begin{equation}
\pi = \sum_\lambda \pi_\lambda = \sum_\lambda B(u^\lambda)(\cdot)z_\lambda.
\label{pidecomp}
\end{equation}
Since $\dim \Wu_e^-(u^\lambda) = s_\lambda$, this is a more precise version of the statement
that, for a generic flag, the tableau matrix has $s_1$ generators in the first
column, $s_2$ in the second column, and so on until the final $s_\ell$
generators in the $\ell$ column.  

The complete linear and quadratic conditions of involutivity are provided by
Theorem~\ref{thm-gnf+}, which is an adaptation of the construction described in
Chapter 1 of \cite{Yang1987} and thus a re-expression of Guillemin normal
form. Compare it to Theorem 7.1 in \cite{Guillemin1968}.
\begin{thm}[Involutivity Criteria]
Let $A$ denote an tableau given in a generic basis of $V^*$ by
with symbol relations \eqref{symrels}, as in Figure~\ref{figtab}.  Write
$B^\lambda_k$ for $B(u^\lambda)(u_k)$. 
The tableau $A$ is involutive if and only if  there exists a basis of $W$ such
that
\begin{enumerate}
\item\label{endov} $B^{a,\lambda}_{k,b}=0$ for all $a > s_\lambda$; 
\item\label{minor} $\left(B^\lambda_lB^\mu_k - B^\lambda_kB^\mu_l\right)^a_b=0$ 
for all $b$, all $\lambda < l < k$ and $\lambda \leq \mu < k$, and all $a>
    s_l$.
\end{enumerate}
In particular, $B(u^\lambda)(v)$ is an endomorphism of $\Wu^-(u^\lambda)$ such
that 
for all $v, \tilde{v} \in (U_e^*)^\perp$,
\[[B(u^\lambda)(v), B(u^\lambda)(\tilde{v})]=0.\]
\label{thm-gnf+}
\end{thm}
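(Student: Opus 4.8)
The plan is to derive conditions \ref{endov} and \ref{minor} from the iterative normalization of the tableau matrix in a generic basis, following the scheme of \cite{Yang1987} but tracking how the span $\pair{\Xi}_e$ constrains the shape of the matrix $B$. First I would establish the forward direction most directly: assume $A$ is involutive, fix a generic basis $(u^k)$ of $V^*_e$ adapted to the decomposition $U^*_e \oplus Y^*_e \oplus X^*_e$, and use the decomposition \eqref{Adecomp}--\eqref{pidecomp} to write a general element $\pi \in A_e$ in terms of the "free" generators $z_\lambda \in \Wu^-_e(u^\lambda)$. Involutivity means Cartan's test is sharp: the dimension of the prolongation $A^{(1)}_e$ equals $\sum_\lambda \lambda\, s_\lambda$ (equivalently $s_1 + 2s_2 + \cdots$ in the appropriate convention). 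I would compute $\dim A^{(1)}_e$ directly by counting the solutions of the prolonged symbol equations in terms of the $B$-coefficients, and show that equality with the Cartan bound forces, column by column, the vanishing statements in \ref{endov} (the matrix $B^{a,\lambda}_{k,b}$ has no entries below row $s_\lambda$) and the commutator-minor conditions \ref{minor}.

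The key structural input is the "generalized eigenvector" picture: Lemma~\ref{lemma-eigen} says that on $\ker\sigma_\xi$, the operator $B(\xi)(v)$ acts by the scalar $\xi(v)$. For $\lambda$ in the range $1,\dots,\ell$, the covector $u^\lambda$ lies in (or projects onto a branch point of) $\Xi_e$, so $B(u^\lambda)(v)$ restricted to $\Wu^-_e(u^\lambda)$ is the near-diagonal block governing that characteristic direction. Condition \ref{endov} is precisely the statement that this block is "triangular" with respect to the flag $\Wu^-_e(u^1) \supset \cdots \supset \Wu^-_e(u^\ell)$: the image of $B(u^\lambda)(\cdot)z$ for $z \in \Wu^-_e(u^\lambda)$ stays inside $\Wu^+_e(u^\lambda)$ in the appropriate rows. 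I would prove this by the standard inductive column-reduction: process columns $\ell, \ell{-}1, \dots, 1$, and at each stage use a permitted change of basis in $W_e$ together with involutivity to kill the offending entries; the choice of $(w_a)$ in the theorem statement is exactly the basis that survives this reduction. Once \ref{endov} holds, \ref{minor} follows because the prolongation equations $\pi^a_k = B^{a,\lambda}_{k,b}\pi^b_\lambda$ must be consistent when a generator appears in two different columns, and consistency is the vanishing of the displayed commutator on the relevant index range — this is the quadratic (Frobenius-type) condition, and $B(u^\lambda)(v)$ being an endomorphism of $\Wu^-_e(u^\lambda)$ with $[B(u^\lambda)(v),B(u^\lambda)(\tilde v)]=0$ for $v,\tilde v \in (U^*_e)^\perp$ is the specialization of \ref{minor} to $\mu = \lambda$.

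For the converse, I would assume a basis of $W_e$ exists satisfying \ref{endov} and \ref{minor}, and verify Cartan's test by exhibiting enough prolonged integral elements: conditions \ref{endov}--\ref{minor} make the prolongation equations solvable with exactly $\lambda$ free parameters attached to each generator in column $\lambda$, so the Cartan bound is attained and $A$ is involutive. The main obstacle I anticipate is bookkeeping the index ranges carefully in the inductive reduction — in particular, confirming that the reduction can always be carried out within the allowed basis changes of $W_e$ (those preserving the flag $\Wu^-_e(u^\lambda)$) and that after reduction the remaining entries really are pinned down by \ref{symrels}. This is where the delicacy lies: the claim is a normal-form existence statement, so I must show not just that involutive tableaux satisfy \ref{endov}--\ref{minor} in some basis, but that the reduction terminates at a basis where both conditions hold simultaneously. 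I would lean on the treatment in \cite{Smith2014a} and \cite{Yang1987} for the termination of this reduction, and reference Theorem 7.1 of \cite{Guillemin1968} for the comparison with the coordinate-free Guillemin normal form, which gives an independent check of the final endomorphism-commutativity clause.
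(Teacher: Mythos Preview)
The paper does not actually prove this theorem: immediately after stating it, the author writes ``Because its notational intricacies are useless for the Main Theorems here, we remove the discussion of Theorem~\ref{thm-gnf+} to a separate article, \cite{Smith2014a}.'' So there is no in-text proof to compare against; the paper's ``proof'' is a deferral to \cite{Smith2014a}, together with the surrounding remarks that the result is an adaptation of Yang's iterative construction (\cite{Yang1987}, Chapter~1) blended with Guillemin's coordinate-free normal form (\cite{Guillemin1968}, Theorem~7.1). Your proposal is consistent with exactly that: you outline a column-by-column reduction in the style of Yang, use Cartan's test to pin down the dimension count, and explicitly lean on \cite{Smith2014a} and \cite{Yang1987} for the termination of the reduction. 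In that sense your plan and the paper's treatment agree.

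One conceptual wobble worth flagging: you describe Lemma~\ref{lemma-eigen} as the ``key structural input'' for condition~\ref{endov}, and you write that $u^\lambda$ ``lies in (or projects onto a branch point of) $\Xi_e$.'' In the paper's setup the basis $(u^\lambda)$ of $U^*_e$ is, when $\ell<L$, chosen \emph{away} from $\Xi_e$ (it is regular), and even when $\ell=L$ the argument is supposed to use only genericity, not regularity. Lemma~\ref{lemma-eigen} is really a \emph{consequence} of the normal form rather than a tool for establishing it; condition~\ref{endov} is obtained by a basis change in $W_e$ together with the Cartan dimension count, not by appealing to the eigenvalue equation on $\ker\sigma_\xi$. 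This does not break your overall plan --- the Cartan-test/column-reduction argument you describe for \ref{endov} and the consistency argument for \ref{minor} are the right mechanisms --- but the eigenvector heuristic should be demoted from ``key input'' to ``motivating picture,'' and the actual work is the index bookkeeping you correctly identify as the main obstacle.
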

Because its notational intricacies are useless for the Main Theorems here, we remove
the discussion of Theorem~\ref{thm-gnf+} to a separate article,
\cite{Smith2014a}.  
Also, compare Corollary \ref{cor:restrict} to Theorem A in \cite{Guillemin1968}.

\begin{cor}[Guillemin]
If $A$ is involutive, then $A|_U$ (the projection of $A$ to $W \otimes U^*$) is involutive. 
\label{cor:restrict}
\end{cor}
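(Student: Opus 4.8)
The plan is to derive this from the Involutivity Criteria of Theorem~\ref{thm-gnf+}. Since $A$ is involutive, fix a generic basis $(u^k)$ of $V^*$ adapted to the decomposition $V^*=U^*\oplus Y^*\oplus X^*$, so that $u^1,\dots,u^\ell$ span $U^*$, together with a basis $(w_a)$ of $W$ for which conditions \ref{endov} and \ref{minor} hold. (That $(u^k)$ need only be generic, not regular, is what makes such an adapted choice available even when $\ell=L$; see \cite{Smith2014a}.) In this basis every generator of $A$ lies in one of the first $\ell$ columns, and $B^{a,\lambda}_{k,b}=0$ whenever $\lambda>\ell$, since $s_\lambda=0$ there. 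Hence the projection $q\colon A\to A|_U$ that forgets the $Y^*\oplus X^*$ columns is a linear isomorphism, and $A|_U\subset W\otimes U^*$ is exactly the tableau cut out by the symbol relations~\eqref{symrels} with the column index restricted to $k\leq\ell$.

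First I would check that $(u^1,\dots,u^\ell)$ is a generic basis of $U^*$ for $A|_U$. Because $q$ is a bijection and is basis-independent, the symbol relations of $A$ in \emph{any} basis of $V^*$ extending a basis of $U^*$ can be reduced so that all $\dim A$ generators sit in the first $\ell$ columns; there they are literally the generators of $A|_U$ in the induced basis of $U^*$. Thus the Cartan characters of $A$ in such a basis are $(\tilde s_1,\dots,\tilde s_\ell,0,\dots,0)$, where $(\tilde s_\lambda)$ is the Cartan character sequence of $A|_U$, and lexicographic maximality for $A$ forces it for $A|_U$.

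Next I would note that the Guillemin data restrict verbatim: the subspaces $\Wu^-(u^\lambda)$ depend only on $s_\lambda$, hence are unchanged, and for $\lambda\leq\ell$ the endomorphisms $B^\lambda_k=B(u^\lambda)(u_k)$ computed from $A|_U$ coincide with those computed from $A$, both being assembled from the coefficients $B^{a,\lambda}_{k,b}$ with $\lambda\leq\ell$. Therefore conditions \ref{endov} and \ref{minor} of Theorem~\ref{thm-gnf+} for $A|_U$ are precisely the specializations of the corresponding conditions for $A$ to the index range $k,l\leq\ell$, so they hold with the same basis $(w_a)$ of $W$. Theorem~\ref{thm-gnf+} then yields that $A|_U$ is involutive.

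The step I expect to be the main obstacle is the genericity claim of the second paragraph: ``generic basis of $V^*$'' constrains the entire flag in $V^*$, and one must be certain that its truncation remains lexicographically optimal for $A|_U$ rather than for some proper sub-tableau. The rigidity that makes it work is that $q$ is a bijection --- no generators are created or lost under the projection to $W\otimes U^*$ --- after which the involutivity criteria are inherited with no further computation.
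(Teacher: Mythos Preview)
The paper does not actually supply its own proof of this corollary: it is stated immediately after Theorem~\ref{thm-gnf+}, attributed to Guillemin, and the reader is directed to Theorem~A of \cite{Guillemin1968} and to the companion article \cite{Smith2014a} for the supporting discussion. Your approach---restricting the data of Theorem~\ref{thm-gnf+} to the $U^*$-columns and observing that conditions~\ref{endov} and~\ref{minor} specialize verbatim---is precisely the argument implicit in calling it a ``corollary,'' and it is correct.

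One small tightening: your second paragraph works harder than it needs to. The paper already asserts (in Section~\ref{gnf}, with details deferred to \cite{Smith2014a}) that the adapted basis $(u^k)$ is generic for $A$; once you accept that, the injectivity of $q$ forces the characters of $A$ in that basis to be $(s_1,\ldots,s_\ell,0,\ldots,0)$, so the first $\ell$ characters of $A$ and the characters of $A|_U$ in $(u^1,\ldots,u^\ell)$ coincide. The lex-maximality of $(s_1,\ldots,s_\ell)$ for $A|_U$ then follows because any competing basis of $U^*$ extends to an adapted basis of $V^*$ whose character sequence cannot exceed that of the generic one.
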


The map $B(\varphi)$ makes sense for any $\varphi \in U^*_e$, not just the
basis elements, and the spaces $\Wu^-(u^\lambda)$ can be generalized for any
$\varphi \in U^*_e$ in the following way:
Let $\varphi= \varphi_\lambda u^\lambda$, and let
$\underline{\lambda} = \min\{ \lambda : \varphi_\lambda \neq 0\}$.
Define the space 
\[\Wu^-_e(\varphi) =  \Wu^-_e(u^{\underline{\lambda}}).\]
Then condition~\ref{endov} of Theorem~\ref{thm-gnf+} reveals  $B(\varphi) = \sum_\lambda \varphi_\lambda
B(u^\lambda)$, so involutivity implies that $B(\varphi)(v)$ is an endomorphism
of $\Wu_e^-(\varphi)$; however, the commutativity property is more subtle
because of the ordering of condition~\ref{minor}.

Recall the space $\Wu_e^1(\varphi)$ from \eqref{W1e} studied by Guillemin. The spaces
$\Wu_e^-(\varphi)$ and $\Wu_e^1(\varphi)$ have the following relationship.
\begin{lemma}
For any $\varphi \in U_e^*$, 
\[ 
\Wu^1_e(\varphi) = \left\{ 
z \in \Wu_e^-(\varphi) ~:~ 
\left( \sum_\lambda \varphi_\lambda B^\lambda_\mu z\right)^b = \varphi_\mu
z^b,\ \forall a > s_\mu,\ 
 \forall \mu
\leq \ell 
\right\}.\]
\label{Wup}
\end{lemma}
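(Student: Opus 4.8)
The plan is to prove the identity by directly unwinding the definition of $\Wu^1_e(\varphi)$ in \eqref{W1e} into the symbol relations \eqref{symrels}, using the dictionary between the matrix entries $\pi^a_k$ of an element of $A_e$ and the operators $B^\lambda_\mu = B(u^\lambda)(u_\mu)$ recorded in \eqref{IB}. Fix $\varphi = \varphi_\lambda u^\lambda \in U^*_e$ and set $\underline\lambda = \min\{\lambda : \varphi_\lambda \ne 0\}$, so that $\Wu^-_e(\varphi) = \Wu^-_e(u^{\underline\lambda})$. Reading \eqref{W1e}, $z \in \Wu^1_e(\varphi)$ means exactly that the matrix $\pi$ with columns $\pi^a_\lambda = z^a\varphi_\lambda$ for $\lambda \le \ell$ and arbitrary columns $\pi^a_\varrho$ for $\varrho > \ell$ can be chosen to lie in $A_e$. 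But $s_\varrho = 0$, so the relations \eqref{symrels} indexed by $k = \varrho$ merely \emph{define} each entry $\pi^a_\varrho$ in terms of the $\pi^b_\lambda$ with $\lambda \le \ell$ (the only columns appearing on the right of \eqref{symrels}); they impose no condition on $z$. Hence $z \in \Wu^1_e(\varphi)$ if and only if the remaining relations of \eqref{symrels}, those indexed by $k = \mu$ with $\mu \le \ell$ and $a > s_\mu$, hold for $\pi$.

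Writing those relations out, using $\pi^a_\mu = z^a\varphi_\mu$, $\pi^b_\lambda = z^b\varphi_\lambda$, and $B^{a,\lambda}_{\mu,b} = 0$ unless $\lambda < \mu$ and $b \le s_\lambda$, each becomes
\[ z^a\varphi_\mu \;=\; \sum_{\lambda<\mu}\ \sum_{b\le s_\lambda} B^{a,\lambda}_{\mu,b}\,z^b\varphi_\lambda \;=\; \Big(\sum_\lambda \varphi_\lambda\, B^\lambda_\mu z\Big)^a \qquad\text{for all }a>s_\mu,\ \mu\le\ell, \]
where the last equality is precisely \eqref{IB} evaluated at $u_\mu$. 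This is the condition displayed in the lemma (with the row index named $a$), so it remains only to see that it forces $z \in \Wu^-_e(\varphi)$, so that the locus it cuts out inside $\Wu^-_e(\varphi)$ agrees with the locus it cuts out inside all of $W_e$. For this, specialize to $\mu = \underline\lambda$ and $a > s_{\underline\lambda}$: every nonzero term of the double sum then has $\lambda < \underline\lambda$, hence $\varphi_\lambda = 0$, so that expression vanishes and $z^a\varphi_{\underline\lambda} = 0$; since $\varphi_{\underline\lambda} \ne 0$, this gives $z^a = 0$ for all $a > s_{\underline\lambda}$, i.e.\ $z \in \Wu^-_e(u^{\underline\lambda}) = \Wu^-_e(\varphi)$.

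Conversely, given $z \in \Wu^-_e(\varphi)$ satisfying the displayed condition, define $\pi$ by $\pi^a_\lambda = z^a\varphi_\lambda$ for $\lambda \le \ell$ and by the relations \eqref{symrels} for the remaining columns; the relations with $k = \varrho$ hold by construction and those with $k = \mu \le \ell$ are exactly the displayed condition, so $\pi \in A_e$ and $\pi = z \otimes \varphi + J^a_\varrho(w_a \otimes u^\varrho)$ witnesses $z \in \Wu^1_e(\varphi)$. I expect no serious obstacle here: the lemma is a bookkeeping translation between the operator description of the tableau and its description by the generic symbol relations. The single point that needs care is the staircase shape of the tableau in Figure~\ref{figtab} — keeping straight which entries $\pi^a_\lambda$ are free generators, and the monotonicity $s_1 \ge \cdots \ge s_n$ — together with the observation that $\underline\lambda$ being the \emph{smallest} active index is exactly what makes the $k = \underline\lambda$ relations collapse to the membership $z \in \Wu^-_e(\varphi)$.
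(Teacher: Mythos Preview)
Your argument is correct: it is the natural direct translation between the definition \eqref{W1e} of $\Wu^1_e(\varphi)$ and the symbol relations \eqref{symrels}, using \eqref{IB} to identify the $a>s_\mu$ components of $B(\varphi)(u_\mu)z$ with the right-hand side of \eqref{symrels}. Your additional observation---that specializing the displayed condition to $\mu=\underline\lambda$ already forces $z\in\Wu^-_e(\varphi)$, so the restriction to $\Wu^-_e(\varphi)$ in the statement is redundant---is a nice bonus. The paper itself does not prove this lemma in the text; it is one of the results whose details are removed to the companion article \cite{Smith2014a}, so there is no ``paper's own proof'' to compare against here. Your direct unwinding is exactly the expected argument. (One cosmetic note: the paper's displayed definitions of $\Wu^-_e(u^k)$ and $\Wu^+_e(u^k)$ have the inequalities swapped, as one sees from the stated dimension $\dim\Wu^-_e(u^\lambda)=s_\lambda$ and the nesting $\Wu^-_e(u^1)\supset\Wu^-_e(u^2)\supset\cdots$; you have used the intended convention, which is the right thing to do.)
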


\begin{thm}[Guillemin]
For every $\varphi \in U^*_e$ and $v \in V_e$, the restricted homomorphism
$B(\varphi)(v)|_{\Wu_e^1(\varphi)}$ is an endomorphism of $\Wu_e^1(\varphi)$, with
\begin{equation}
B(\varphi)(v)z  =  (\varphi_\lambda v^\lambda) z+ (J^a_\varrho v^\varrho)w_a  =
\varphi(v)z + J(v) = 
\pi v,
\label{gnfform}
\end{equation}
where $\pi = B(u^\lambda)(\cdot)z$.
Moreover, for all $v, \tilde{v} \in V_e$, 
\begin{equation} \left[ 
B(\varphi)(v), 
B(\varphi)(\tilde{v})\right]\Big|_{\Wu^1_e(\varphi)}
=0.
\end{equation}
\label{thm-gnf1}
\end{thm}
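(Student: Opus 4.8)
The crux is the normal-form identity~\eqref{gnfform}; once it is available, the endomorphism and commutativity assertions reduce to criteria~\ref{endov} and~\ref{minor} of Theorem~\ref{thm-gnf+}, and the plan is to carry out the remaining index-level verification as in~\cite{Smith2014a}. Throughout I work in a generic basis $u^1,\dots,u^n$ of $V^*_e$ adapted to $U^*_e\oplus Y^*_e\oplus X^*_e$ and a basis $w_a$ of $W_e$ putting the symbol relations in the reduced form~\eqref{symrels} with $B^{a,\lambda}_{k,b}=0$ whenever $a>s_\lambda$, and I fix $\varphi=\varphi_\lambda u^\lambda\in U^*_e$ with $\underline{\lambda}=\min\{\lambda:\varphi_\lambda\neq 0\}$, so that $\Wu^-_e(\varphi)=\Wu^-_e(u^{\underline{\lambda}})$.

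First I would prove~\eqref{gnfform}. Given $z\in\Wu^1_e(\varphi)$, form $\pi\in W_e\otimes V^*_e$ by $\pi^a_k=\bigl(B(\varphi)(u_k)z\bigr)^a$, read off from (\ref{IB}~\emph{bis}). In a column $\mu\leq\ell$ the identity summand of (\ref{IB}~\emph{bis}) contributes $\varphi_\mu z^a$ in the rows $a\leq s_\mu$, while for $a>s_\mu$ the remaining summand equals $\sum_\lambda\varphi_\lambda B^{a,\lambda}_{\mu,b}z^b$, which the defining relation of $\Wu^1_e(\varphi)$ in Lemma~\ref{Wup} identifies with $\varphi_\mu z^a$. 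Hence column $\mu$ of $\pi$ is $\varphi_\mu z$ for every $a$, so $\pi=z\otimes\varphi+J^a_\varrho\,(w_a\otimes u^\varrho)$ with $J^a_\varrho=\sum_\lambda\varphi_\lambda B^{a,\lambda}_{\varrho,b}z^b$. Checking this $\pi$ against~\eqref{symrels} (the relations with $k\leq\ell$ are, once more, Lemma~\ref{Wup}; those with $k>\ell$ hold by construction) shows $\pi\in A_e$, and contracting with $v$ gives
\[ B(\varphi)(v)z=\pi v=(\varphi_\lambda v^\lambda)z+(J^a_\varrho v^\varrho)w_a=\varphi(v)z+J(v),\]
which is~\eqref{gnfform}; the subcase $J\equiv 0$ recovers Lemma~\ref{lemma-eigen}. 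Since $u_\mu$ with $\mu\leq\ell$ has no $u^\varrho$-component, the $J$-term drops out, so $B(\varphi)(u_\mu)|_{\Wu^1_e(\varphi)}=\varphi_\mu\,\mathrm{Id}$; moreover criterion~\ref{endov} confines the nonzero rows of each $\varphi_\lambda B^\lambda_k$ to the range $a\leq s_\lambda\leq s_{\underline{\lambda}}$, so $B(\varphi)(v)$ maps $W_e$ into $\Wu^-_e(\varphi)$ for every $v$.

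Next I would obtain both remaining claims from one commutation statement. For $z\in\Wu^1_e(\varphi)$ we have $\bigl(B(\varphi)(u_\mu)-\varphi_\mu\bigr)z=0$, hence $\bigl(B(\varphi)(u_\mu)-\varphi_\mu\bigr)B(\varphi)(v)z=\bigl[B(\varphi)(u_\mu),B(\varphi)(v)\bigr]z$; so both the endomorphism property ($B(\varphi)(v)z\in\Wu^1_e(\varphi)$ for all $v$) and the commutativity ($\bigl[B(\varphi)(v),B(\varphi)(\tilde v)\bigr]$ annihilates $\Wu^1_e(\varphi)$) follow as soon as the endomorphisms $B(\varphi)(v)$ of $\Wu^-_e(\varphi)$ are shown to commute on $\Wu^1_e(\varphi)$. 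Using bilinearity together with $B(\varphi)(u_\mu)|_{\Wu^1_e(\varphi)}=\varphi_\mu\,\mathrm{Id}$, this reduces to two families of quadratic identities in the $B^\lambda_k$, applied to $z$: the ``mixed'' identity $\sum_{\lambda,\lambda'}\varphi_\lambda\varphi_{\lambda'}\,B^\lambda_\mu B^{\lambda'}_\varrho\,z=\varphi_\mu\sum_{\lambda'}\varphi_{\lambda'}\,B^{\lambda'}_\varrho\,z$ for $\mu\leq\ell<\varrho$ (which is itself equivalent to the endomorphism property), and $\sum_{\lambda,\lambda'}\varphi_\lambda\varphi_{\lambda'}\bigl(B^\lambda_\varrho B^{\lambda'}_\varsigma-B^\lambda_\varsigma B^{\lambda'}_\varrho\bigr)z=0$ for $\ell<\varrho<\varsigma$.

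The main obstacle is precisely this quadratic bookkeeping. The identities above follow, summand by summand, from the vanishing of the expressions $\bigl(B^\lambda_l B^{\mu'}_k-B^\lambda_k B^{\mu'}_l\bigr)^a_b$ provided by criterion~\ref{minor}; but that criterion only controls the index patterns with $\lambda<l<k$, $\lambda\leq\mu'<k$, and the rows $a>s_l$, so the ``wrong-order'' summands and the rows $a\leq s_l$ (on which~\ref{minor} is silent) must be reabsorbed by feeding back the eigenspace relation $\sum_\lambda\varphi_\lambda B^\lambda_\mu z=\varphi_\mu z$ for $z$ from Lemma~\ref{Wup}. Since $u_\varrho,u_\varsigma\in(U^*_e)^\perp$, the net effect is exactly the commutativity already recorded in the last line of Theorem~\ref{thm-gnf+}, now promoted from a single basis covector $u^\lambda$ to the combination $\varphi$. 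This is the delicate step, and the plan is to present the index computation by citing the detailed treatment of Theorem~\ref{thm-gnf+} in~\cite{Smith2014a} rather than reproducing it here.
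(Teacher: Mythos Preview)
The paper does not give its own proof of this theorem: it is stated as a classical result due to Guillemin, with pointers to Lemma~4.1 of \cite{Guillemin1968} and Proposition~6.3 in Chapter~VIII of \cite{BCGGG}, together with the remark that the usual restriction of $v,\tilde v$ to $(U^*_e)^\perp$ is rendered unnecessary by the identity term in~\eqref{IB}. All index-level details of this flavor are explicitly deferred to the companion article~\cite{Smith2014a}. So there is no in-paper proof to compare against.

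That said, your route is essentially the one the paper's framework is designed to support, and it aligns with what \cite{Smith2014a} carries out. Your derivation of~\eqref{gnfform} is correct: computing $(B(\varphi)(u_\mu)z)^a$ from (\ref{IB}~\emph{bis}) gives $\varphi_\mu z^a$ for $a\le s_\mu$ by the identity summand and for $a>s_\mu$ by the defining relation in Lemma~\ref{Wup}, so $\pi=z\otimes\varphi+J$ lies in $A_e$ and contracting against $v$ yields the displayed formula. Your reduction of the endomorphism property to the vanishing of $[B(\varphi)(u_\mu),B(\varphi)(v)]z$ in the rows $a>s_\mu$ is valid precisely because you first established $B(\varphi)(v)z\in\Wu^-_e(\varphi)$ via criterion~\ref{endov}, so Lemma~\ref{Wup} applies. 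Your identification of the delicate step---that criterion~\ref{minor} only controls rows $a>s_l$ and the ordered index ranges, with the remaining terms absorbed by feeding back the eigenvector relation---is exactly the point, and deferring the bookkeeping to~\cite{Smith2014a} is what the paper itself does.
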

One important distinction between Theorems~\ref{thm-gnf+} and \ref{thm-gnf1} is the space $\Wu^-_e(u^\lambda)$ versus $\Wu^1_e(u^\lambda)$.
 Note also that the usual statement of this theorem, as in Proposition 6.3 in Chapter VIII of \cite{BCGGG} and Lemma 4.1 \cite{Guillemin1968} restricts
$v,\tilde{v}$ to the subspace $(U_e^*)^\perp \cong Y_e \oplus X_e$, but this is
unnecessary because of our inclusion of the identity term in \eqref{IB}.

Theorems~\ref{thm-gnf+} and \ref{thm-gnf1} allow a converse of Lemma~\ref{lemma-eigen}
in the form of Corollary~\ref{cor:backeigen}.  
\begin{cor}
Suppose that $(M,\mathcal{I})$ is an involutive exterior differential system.
Fix $\varphi \in U^*_e$ and suppose that $z \in \Wu_e^-(\varphi)$ such that $z$ is
an eigenvector of $B(\varphi)(v)$ for every $v \in V_e$.  Then there
is a $\xi \in \Xi_e$ over $\varphi \in U^*_e$ such that $z  \in
\Wu^1_e(\varphi)$, so $z \otimes \xi \in A_e$.
\label{cor:backeigen}
\end{cor}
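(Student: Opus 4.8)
This is a converse of Lemma~\ref{lemma-eigen}: once $z$ is known to lie in the Guillemin subspace $\Wu_e^1(\varphi)$, reading the eigenvalue computation of Theorem~\ref{thm-gnf1} backwards produces $\xi$, so the substantive point is the promotion of $z$ from $\Wu_e^-(\varphi)$ to $\Wu_e^1(\varphi)$. To begin, since $v\mapsto B(\varphi)(v)$ is linear and $z\neq0$ is an eigenvector of every $B(\varphi)(v)$, the eigenvalue is a linear function of $v$, so there is a unique $\psi\in V^*_e$ with $B(\varphi)(v)z=\psi(v)z$ for all $v\in V_e$. Write $\underline{\lambda}=\min\{\lambda:\varphi_\lambda\neq0\}$, so that by hypothesis $z\in\Wu_e^-(\varphi)=\Wu_e^-(u^{\underline{\lambda}})$.

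For the promotion step, expand \eqref{IB} using condition~\ref{endov} of Theorem~\ref{thm-gnf+} to see that for each $\mu=1,\dots,\ell$ the vector $B(\varphi)(u_\mu)z-\varphi_\mu z$ has nonzero components only in rows $a>s_\mu$. With Lemma~\ref{Wup}, this makes membership $z\in\Wu_e^1(\varphi)$ equivalent to $B(\varphi)(u_\mu)z=\varphi_\mu z$ for all $\mu\leq\ell$; that is, one must show the eigenvalue $\psi_\mu$ of $B(\varphi)(u_\mu)$ on $z$ equals the expected value $\varphi_\mu$. Comparing the topmost nonvanishing component in $\psi_\mu z=B(\varphi)(u_\mu)z$ already yields $\psi_\mu=\varphi_\mu$ whenever $z$ has a nonzero entry in some row $a\leq s_\mu$; the remaining indices (those $\mu$ for which $z$ is supported strictly below row $s_\mu$) are handled by descending the character filtration $\Wu_e^-(u^1)\supset\Wu_e^-(u^2)\supset\cdots$, feeding the already-proved identities $B(\varphi)(u_{\mu'})z=\varphi_{\mu'}z$ for smaller $\mu'$ into the structure equations \eqref{symrels} and using the commutativity relations of condition~\ref{minor}. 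This bookkeeping for a $z$ sitting deep in the filtration is the main obstacle; it is also precisely where the hypothesis ``eigenvector for \emph{every} $v\in V_e$,'' rather than merely for $v\in(U_e^*)^\perp$, is essential.

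Granting $z\in\Wu_e^1(\varphi)$, \eqref{W1e} furnishes a $J$ with $\pi:=z\otimes\varphi+J^a_\varrho\,(w_a\otimes u^\varrho)\in A_e$, and Theorem~\ref{thm-gnf1} identifies $B(\varphi)(v)z=\pi v=\varphi(v)z+J(v)$ with $J(v)=(J^a_\varrho v^\varrho)w_a$. Subtracting from $B(\varphi)(v)z=\psi(v)z$ gives $J(v)=(\psi(v)-\varphi(v))z$; then $v=u_\lambda$ forces $\psi_\lambda=\varphi_\lambda$ for all $\lambda\leq\ell$, while $v=u_\varrho$ gives $J^a_\varrho=\psi_\varrho z^a$. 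Hence $\pi=z\otimes\xi$ for $\xi:=\varphi+\psi_\varrho u^\varrho=\psi$, so $z\otimes\xi\in A_e$ and therefore $\sigma_\xi(z)=\sigma(z\otimes\xi)=0$. By the definition of the characteristic variety this puts $[\xi]$ in $\Xi_e$; since the $U_e^*$-component of $\xi$ is $\varphi$, the class $[\xi]$ lies over $\varphi$; and $z\otimes\xi\in A_e$ with $z\in\Wu_e^1(\varphi)$, which is the assertion. (That $\xi$ carries no $X_e^*$-component then follows a posteriori from $\Xi_e\subset\pair{\Xi}_e$.)
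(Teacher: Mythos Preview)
Your approach matches the paper's.  The paper does not give a self-contained proof here either: it says only that the argument ``is similar to the construction in the usual proof of Theorem~\ref{thm-gnf1}, but that construction requires $z\in\Wu_e^1(\varphi)$ \emph{a priori}.  The key is Lemma~\ref{Wup}.  See \cite{Smith2014a} for details.''  You have identified exactly the same key lemma, correctly isolated the promotion $\Wu_e^-(\varphi)\to\Wu_e^1(\varphi)$ as the substantive step, and your third paragraph---reading off the eigenvalue functional $\psi$, using \eqref{gnfform} to force $\psi_\lambda=\varphi_\lambda$ and $J^a_\varrho=\psi_\varrho z^a$, and concluding $\pi=z\otimes\psi\in A_e$---is precisely the ``construction in the usual proof of Theorem~\ref{thm-gnf1}'' run backwards.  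Your handling of the residual subcase (those $\mu$ with $z$ supported entirely in rows $>s_\mu$) is admittedly only a sketch, but the paper's own treatment is even more so, deferring that bookkeeping entirely to the companion article.
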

Corollary~\ref{cor:backeigen} is a bit more subtle than it might first appear.  It is similar to
the construction in the usual proof of Theorem~\ref{thm-gnf1}, 
but that construction requires $z \in \Wu_e^1(\varphi)$ \emph{a priori}.
The key is Lemma~\ref{Wup}.  See \cite{Smith2014a} for details.
Corollary~\ref{cor:backeigen} deserves a warning:  The specification of $\xi$
over $\varphi$ is not unique, as the variety $\Xi$ may have multiple components
and multiplicity.

\begin{lemma}
Suppose that $(M,\mathcal{I})$ is an involutive exterior differential system
with a coframe $u$ on $V$ as described above.
For any $v \in V_e$, the following are equivalent:
\begin{enumerate}
\item\label{l1}  $v \in X^1_e$;
\item\label{l2}  $v^i = u^i(v) = 0$ for all $i=1,\ldots,L$; 
\item\label{l3}  $B(\varphi)(v)|_{\Wu^1_e(\varphi)}$ is nilpotent (possibly
trivial) for all $\varphi \in U_e^*$; 
\end{enumerate}
\label{gnfnilp}
\end{lemma}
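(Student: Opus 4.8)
The strategy is to prove the cycle $\ref{l1}\Leftrightarrow\ref{l2}$ by direct inspection of the coframe, then $\ref{l2}\Rightarrow\ref{l3}$ using the Guillemin normal form of Theorem~\ref{thm-gnf1}, and finally close the loop with $\ref{l3}\Rightarrow\ref{l2}$ by the eigenvector converse, Corollary~\ref{cor:backeigen}, together with Lemma~\ref{lemma-eigen}. The equivalence $\ref{l1}\Leftrightarrow\ref{l2}$ is essentially definitional: the coframe $(u^k)$ was chosen so that $u^1,\dots,u^L$ span $\pair{\Xi}_e = U^*_e \oplus Y^*_e$, hence $X^1_e = \pair{\Xi}_e^\perp$ consists exactly of those $v \in V_e$ with $u^i(v) = v^i = 0$ for $i = 1,\dots,L$. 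One should note that $v^i = 0$ for $i \le L$ implies $\xi(v) = 0$ for every $\xi \in \Xi_e$ (since $\xi \in \pair{\Xi}_e$ has only $u^i$-components), and conversely; this is the content of Definition~\ref{sat1}, so no real work is needed here.

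For $\ref{l2}\Rightarrow\ref{l3}$: fix $\varphi \in U^*_e$ and $z \in \Wu^1_e(\varphi)$. By the normal-form expression \eqref{gnfform} in Theorem~\ref{thm-gnf1}, $B(\varphi)(v)z = \varphi(v)\,z + J(v)\,w_a$ where $J(v) = J^a_\varrho v^\varrho w_a$ depends only on the components $v^\varrho$ with $\varrho > \ell$. When $v \in X^1_e$ we have $v^i = 0$ for all $i \le L$, so in particular $v^\lambda = 0$ for $\lambda \le \ell$, whence $\varphi(v) = \varphi_\lambda v^\lambda = 0$; thus $B(\varphi)(v)z = J(v) \in \Wu^1_e(\varphi)$ lies in the ``$J$ part'' with no $\varphi$-eigenvalue contribution. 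The key structural point is that $B(\varphi)(v)$ restricted to $\Wu^1_e(\varphi)$ is then strictly ``upper triangular'' with respect to the splitting of $\Wu^1_e(\varphi)$ coming from the column decomposition \eqref{Adecomp}: the term $J(v)w_a$ lands in strictly lower columns (recall $B^{a,\lambda}_{k,b} = 0$ unless $\lambda < k$, from the structure of \eqref{symrels}), so iterating $B(\varphi)(v)$ shifts the column index down each time and must vanish after at most $\ell$ steps. Hence $B(\varphi)(v)|_{\Wu^1_e(\varphi)}$ is nilpotent. I expect this triangularity bookkeeping — making the column-shift argument precise against the index conventions of \eqref{symrels} and \eqref{pidecomp}, and handling the $\varphi_\lambda B^\lambda_\mu$ cross-terms carefully via Lemma~\ref{Wup} — to be the main obstacle.

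For $\ref{l3}\Rightarrow\ref{l2}$, argue by contraposition: suppose $v \notin X^1_e$, so $v^i \neq 0$ for some $i \le L$. We must produce $\varphi \in U^*_e$ and a vector in $\Wu^1_e(\varphi)$ on which $B(\varphi)(v)$ acts non-nilpotently. If some $v^\lambda \neq 0$ for $\lambda \le \ell$, pick $\varphi \in U^*_e$ with $\varphi(v) = \varphi_\lambda v^\lambda \neq 0$; then for any $\xi \in \Xi_e$ lying over $\varphi$ and any $0 \neq z \in \ker\sigma_\xi$, Lemma~\ref{lemma-eigen} gives $B(\xi)(v)z = \xi(v)z$, and one checks $\xi(v) = \varphi(v) \neq 0$ (the $Y^*_e$ and $X^*_e$ components of $\xi$ pair to zero against $v$ only if $v^\varrho$-components are handled — but here $\varphi(v)$ is already the nonzero eigenvalue), so $z$ is an eigenvector of $B(\varphi)(v)|_{\Wu^1_e(\varphi)}$ with nonzero eigenvalue, contradicting nilpotence. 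The remaining case is $v^\lambda = 0$ for all $\lambda \le \ell$ but $v^\varsigma \neq 0$ for some $\ell < \varsigma \le L$; here one uses that $u^\varsigma \in Y^*_e \subset \pair{\Xi}_e$, so $u^\varsigma = \sum c_\text{(terms)}$ is a limit/combination of genuine characteristics $\xi \in \Xi_e$, and by continuity (or by directly applying Corollary~\ref{cor:backeigen} to an approximating $\varphi$ with a vector whose $B(\varphi)(v)$-image has a nonzero $u^\varsigma$-type component) one again finds a non-nilpotent action. This last sub-case is delicate precisely because $Y^*_e$ consists of non-characteristic points inside $\pair{\Xi}_e$, so one cannot invoke Lemma~\ref{lemma-eigen} directly and must instead leverage the spanning property of $\Xi_e$ over $\pair{\Xi}_e$ via Corollary~\ref{cor:backeigen}; I would expect to cite \cite{Smith2014a} for the fine points.
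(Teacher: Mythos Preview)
Your equivalence \ref{l1}$\Leftrightarrow$\ref{l2} is fine and matches the paper exactly.

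The substantive difference is in \ref{l2}$\Rightarrow$\ref{l3}, where your triangularity argument has a genuine gap. The decomposition \eqref{Adecomp} is a decomposition of $A_e$, not of $\Wu^1_e(\varphi)\subset W_e$; there is no evident column grading on $\Wu^1_e(\varphi)$ that $B(\varphi)(v)$ strictly lowers. Knowing that $B^{a,\lambda}_{k,b}=0$ for $\lambda\geq k$ controls how the tableau generators sit in $A_e$, but when you iterate $B(\varphi)(v)$ on a vector $z\in\Wu^1_e(\varphi)$ you land on $J(v)\in\Wu^1_e(\varphi)$, and the ``$J$'' attached to $J(v)$ need not be smaller in any obvious filtration. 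The paper bypasses this entirely: by the commutativity in Theorem~\ref{thm-gnf1}, any eigenvalue of $B(\varphi)(v)|_{\Wu^1_e(\varphi)}$ is realized on a \emph{common} eigenvector $z$ of the whole family $\{B(\varphi)(\tilde v)\}_{\tilde v\in V_e}$; by Corollary~\ref{cor:backeigen} such a $z$ corresponds to some $\xi\in\Xi_e$ over $\varphi$, and the eigenvalue is $\xi(v)$ by Lemma~\ref{lemma-eigen}. If $v\in X^1_e=\pair{\Xi}_e^\perp$ then $\xi(v)=0$, so every eigenvalue vanishes and nilpotency follows. This is the key idea you are missing: \emph{the spectrum of $B(\varphi)(v)$ on $\Wu^1_e(\varphi)$ is exactly $\{\xi(v):\xi\in\Xi_e\text{ over }\varphi\}$}.

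For \ref{l3}$\Rightarrow$\ref{l2}, your contraposition is morally right but over-engineered, and your Case~A contains a slip: the eigenvalue furnished by Lemma~\ref{lemma-eigen} is $\xi(v)$, \emph{not} $\varphi(v)$, and these differ by the $Y^*_e$-component of $\xi$ paired against $v$; your parenthetical does not resolve this. The case split is unnecessary. If $v\notin X^1_e$, then since $\Xi_e$ spans $\pair{\Xi}_e$ there exists $\xi\in\Xi_e$ with $\xi(v)\neq 0$ directly (a nonzero linear functional cannot vanish on a spanning set). Let $\varphi\in U^*_e$ be its projection and take $z\in\ker\sigma_\xi\subset\Wu^1_e(\varphi)$; then $B(\varphi)(v)z=\xi(v)z\neq 0$ gives a nonzero eigenvalue. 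This handles your Cases~A and B uniformly and is exactly the paper's argument read contrapositively; no limiting procedure or appeal to \cite{Smith2014a} is needed.
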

\begin{proof}
Recall that $X^1_e = \pair{\Xi}_e^\perp = (U^*_e \oplus Y^*_e)^\perp$.  The
equivalence of statements \ref{l1} and \ref{l2} is immediate in our chosen
basis for $V^*_e$.  

Fix $v \in X^1_e$, and suppose that $\zeta_\varphi(v)$ is an eigenvalue of
$B(\varphi)(v)|_{\Wu^1_e(\varphi)}$ for some $\varphi \in U^*_e$. The
commutativity property of Theorem~\ref{thm-gnf1} holds, so the eigenspace of $\zeta_\varphi(v)$ contains an
eigenvector $z$ that is shared among $\{
    B(\varphi)(\tilde{v})|_{\Wu^1_e(\varphi)} ~:~ \tilde{v} \in
V_e\}$.  Therefore, Equation~\eqref{eigen} holds, and $\zeta_\varphi(v)z =
\xi(v)z$.  By the assumption that $v \in X^1_e = \pair{\Xi}^\perp_e$, we have
$\xi(v)=0$, so the corresponding eigenvalue $\zeta_\xi(v)$ is zero.  

Conversely, choose $v \in V_e$ such that  
$B(\varphi)(v)|_{\Wu^1_e(\varphi)}$ is nilpotent for
all $\varphi \in U^*_e$ representing $\xi \in \Xi$.  Then every eigenvalue of
$B(\varphi)(v)|_{\Wu^1_e(\varphi)}$ is zero. 
Fixing a particular $\varphi$, if $z$ is a mutual eigenvector 
of $\{ B(\phi)(\tilde{v})|_{\Wu^1_e(\varphi)}\ :\ \tilde{v} \in V_e\}$, then $\xi(v)=0$ for all
$\xi \in \Xi_e$ over $\varphi \in U^*_e$.  Since this holds for all $\varphi
\in U^*_e$, we have $v \in \pair{\Xi}^\perp_e = X^1_e$.
\end{proof}

\begin{lemma}
Suppose that $(M,\mathcal{I})$ is an exterior differential system 
equipped with a basis $(u^k)$ of $V^*_e$ and $(w_a)$ of $W_e$ such that the
coefficients $B^{a,\lambda}_{k,b}$ describing $A$ satisfy condition \ref{endov}
of Theorem~\ref{thm-gnf+}.  The following are equivalent:
\begin{enumerate}
\item\label{k1} $v \in S_e$;  
\item\label{k2} $B(\varphi)(v)$ is the trivial endomorphism
for all $\varphi \in U_e^*$; and 
\item\label{k3} $B(\varphi)(v)|_{\Wu^1(\varphi)}$ is the trivial endomorphism for all
$\varphi \in U_e^*$.
\end{enumerate}
\label{gnftriv}
\end{lemma}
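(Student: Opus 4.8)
The plan is to run the cycle $(k1)\Rightarrow(k2)\Rightarrow(k3)\Rightarrow(k1)$, hinging on the tableau description of Cauchy retractions: a vector $v\in V_e$ lies in $S_e$ if and only if $\pi(v)=0\in W_e$ for every $\pi\in A_e$ (contracting $\pi\in W_e\otimes V^*_e$ against $v$ in the $V^*_e$ slot). I would read this off the prolonged structure equations $\mathrm{d}\theta^a\equiv\pi^a_k\wedge\omega^k\ (\mathrm{mod}\ \theta^b)$: a direction $v\in e$ extends to a Cauchy retraction exactly when $v\lhk\mathrm{d}\theta^a=(\pi^a_k(v))\,\omega^k-v^k\pi^a_k$ lies in $\mathcal{I}$ for every admissible value of the constrained forms $(\pi^a_k)$, and, since the independent tableau forms, the $\omega^k$ and the $\theta^b$ are linearly independent, this forces the tableau part of $v^k\pi^a_k$ to vanish, i.e.\ $\pi(v)=0$ for all $\pi\in A_e$.

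Granting that, $(k1)\Leftrightarrow(k2)$ is the Guillemin decomposition \eqref{Adecomp}--\eqref{pidecomp}: every $\pi\in A_e$ is $\pi=\sum_\lambda B(u^\lambda)(\cdot)z_\lambda$ with the $z_\lambda\in\Wu^-_e(u^\lambda)$ free and independent, so $\pi(v)=\sum_\lambda B(u^\lambda)(v)z_\lambda$, and setting all but one $z_\mu$ to zero shows $\pi(v)=0$ for all $\pi$ iff $B(u^\mu)(v)z_\mu=0$ for every $\mu\le\ell$ and $z_\mu\in\Wu^-_e(u^\mu)$. By condition~\ref{endov}, $B(u^\mu)(v)z$ depends only on the $\Wu^-_e(u^\mu)$-component of $z$ and lands in $\Wu^-_e(u^\mu)$, so this says $B(u^\mu)(v)=0$ for all $\mu\le\ell$; since $U^*_e=\spn\{u^1,\dots,u^\ell\}$ and $B(\varphi)(v)=\sum_\lambda\varphi_\lambda B(u^\lambda)(v)$, that is exactly $(k2)$. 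The implication $(k2)\Rightarrow(k3)$ is trivial, the restriction of the zero map to $\Wu^1_e(\varphi)$ being zero.

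The substance is $(k3)\Rightarrow(k2)$: upgrading ``$B(\varphi)(v)$ kills $\Wu^1_e(\varphi)$'' to ``$B(\varphi)(v)$ kills all of $\Wu^-_e(\varphi)$'', which is genuinely stronger than the nilpotency bookkeeping of Lemma~\ref{gnfnilp}. I would induct downward on $\lambda=\ell,\dots,1$, showing $B(u^\lambda)(v)=0$. The base case is free: by Lemma~\ref{Wup} (no index $\mu$ with $\ell<\mu\le\ell$) $\Wu^1_e(u^\ell)=\Wu^-_e(u^\ell)$, so $(k3)$ already gives $B(u^\ell)(v)=0$. For the step, assume $B(u^\mu)(v)=0$ for $\mu>\lambda$; for $t=(t_{\lambda+1},\dots,t_\ell)$ put $\varphi_t=u^\lambda+\sum_{\mu>\lambda}t_\mu u^\mu$, so $\underline{\lambda}=\lambda$, $\Wu^1_e(\varphi_t)\subseteq\Wu^-_e(\varphi_t)=\Wu^-_e(u^\lambda)$, and $B(\varphi_t)(v)=B(u^\lambda)(v)+\sum_{\mu>\lambda}t_\mu B(u^\mu)(v)=B(u^\lambda)(v)$ by the inductive hypothesis. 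Then $(k3)$ applied to each $\varphi_t$ gives $B(u^\lambda)(v)|_{\Wu^1_e(\varphi_t)}=0$ for all $t$, and the step closes once $\spn\bigcup_t\Wu^1_e(\varphi_t)=\Wu^-_e(u^\lambda)$.

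That spanning identity is the main obstacle, and it is where the non-basis covectors earn their keep — it is false if one only feeds $(k3)$ the basis elements $u^\mu$. Using Lemma~\ref{Wup} one sees $\Wu^1_e(\varphi_t)$ is the kernel of a matrix $M(t)$ with entries affine in $t$, where the $-t_\mu\,\delta^a_b$ contributions force the row spaces to sweep out large parts of $(\Wu^-_e(u^\lambda))^*$ as $t$ varies; showing the sweep exhausts $(\Wu^-_e(u^\lambda))^*$ — equivalently, ruling out a $t$-independent constraint, which would amount to a spurious drop in a Cartan character — is the one step needing real care, and in the involutive setting (the only one in which this lemma is used) it can be completed using the endomorphism and commutativity properties of Theorem~\ref{thm-gnf1} together with Corollary~\ref{cor:backeigen} and Equation~\eqref{eigen} to identify $\spn\bigcup_t\Wu^1_e(\varphi_t)$.
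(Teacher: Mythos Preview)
Your architecture matches the paper's: the equivalence $(k1)\Leftrightarrow(k2)$ via the decomposition \eqref{Adecomp}--\eqref{pidecomp}, the trivial $(k2)\Rightarrow(k3)$, and a downward induction on $\lambda$ for $(k3)\Rightarrow(k2)$ starting from $\Wu^1_e(u^\ell)=\Wu^-_e(u^\ell)$. The divergence is in how the inductive step is closed, and there you have a genuine gap.

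You correctly isolate the spanning identity $\spn\bigcup_t\Wu^1_e(\varphi_t)=\Wu^-_e(u^\lambda)$ as the crux, but you do not prove it. Your final paragraph retreats to ``the involutive setting'' and invokes Theorem~\ref{thm-gnf1} and Corollary~\ref{cor:backeigen}; but the lemma's hypothesis is only condition~\ref{endov} of Theorem~\ref{thm-gnf+}, not full involutivity, so those tools are not legitimately available---and even granting them, you give no mechanism by which eigenvector and commutativity considerations would force the span to be all of $\Wu^-_e(u^\lambda)$. The sentence about ``row spaces sweeping out large parts'' is suggestive but not an argument.

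The paper closes this step differently and more concretely. Rather than the full multi-parameter family, it uses only the one-parameter pencil $\varphi_\tau=u^\lambda+\tau u^\ell$ (so only the base case $B(u^\ell)(v)=0$ is needed to get $B(\varphi_\tau)(v)=B(u^\lambda)(v)$, not the whole inductive hypothesis). It then writes $B(u^\lambda)(u_\ell)$ in block form $\left(\begin{smallmatrix}0&0\\C&D\end{smallmatrix}\right)$ with respect to the split $s_\lambda=s_\ell+(s_\lambda-s_\ell)$, reads off from Lemma~\ref{Wup} that $\Wu^1_e(\varphi_\tau)$ is governed by the condition $\ker(C,\,D-\tau I)$, and argues that varying $\tau$ forces $\ker B(u^\lambda)(v)$ to exhaust $\Wu^-_e(u^\lambda)$ unless $C=D=0$, in which case $\Wu^1_e(u^\lambda)=\Wu^-_e(u^\lambda)$ already. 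This uses only condition~\ref{endov} and the explicit description of $\Wu^1$ in Lemma~\ref{Wup}; no appeal to the commutative algebra of Theorem~\ref{thm-gnf1} is made or needed.
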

\begin{proof}
Now, $v \in S_e$ if and only if $\pi v = 0$ for all $\pi \in A_e$.  
The decomposition \eqref{pidecomp} means this is equivalent to $\pi v =0$ for all
$\pi \in \Au^-_e(u^\lambda)$ for all $\lambda$.  By the isomorphism $\Au_e^-(u^\lambda)
\cong \Wu^-_e(u^\lambda)$, this is equivalent to $B(u^\lambda)(\cdot)z =0$ for all
$z \in \Wu^-_e(u^\lambda)$ for all $\lambda$, which is clearly equivalent to
$B(\varphi)(\cdot)z =0$ for all $z \in \Wu^-_e(\varphi)$ for all $\varphi \in
U^*_e$.   Hence, \ref{k1} and \ref{k2} are equivalent.
Moreover, \ref{k2} implies \ref{k3}, as $\Wu^1_e(\varphi) \subset
\Wu^-_e(\varphi)$.

Suppose \ref{k3} holds for $v$.  
Note that $\Wu^-_e(u^\ell) = \Wu^1(u^\ell)$, so $B(u^\ell)(v)=0$.  If the
Cartan characters are all equal, $s_1=s_2=\cdots=s_\ell$, then the claim \ref{k2} follows trivially.  Therefore,
suppose that $\lambda$ is maximal such that $s_\lambda > s_\ell$.  We 
consider the symbol endomorphisms 
$B(u^\lambda)(u_\ell)$ and $B(u^\lambda)(v)$.
Using $0 < s_\ell  < s_\lambda$, we may consider the
following block-decomposition of $B(u^\lambda)(u_\ell)$:
\begin{equation}
B(u^\lambda)(u_\ell)=
\begin{bmatrix}
0 & 0 \\
C & D 
\end{bmatrix}.
\end{equation}
(The $C,D$ here are merely block matrices, not the objects previously
labeled by those glyphs.)   Note that $z \in \Wu^1(u^\lambda)$ implies that 
$z \in \ker B(u^\lambda)(u_\ell) = \ker(C,~D)$ by Lemma~\ref{Wup}, so the assumption \ref{k3}
implies that $z \in B(u^\lambda)(v)$.  
Now, for any $\varphi = u^\lambda + \tau u^\ell$ in the $\mathbb{P}^1$ spanned
by $u^\lambda$ and $u^\ell$,  we apply Lemma~\ref{Wup} similarly.
In particular, $z \in \ker(C,~D-\tau I)$ implies  $z \in \ker B(u^\lambda)(v)$.
If $C=0$ and $D=0$, then $\Wu^1(u^\lambda) = \Wu^-(u^\lambda)$, so
$B(u^\lambda)(v)=0$ by assumption.  If $C$ or $D$ is non-zero, then varying $\tau$ makes the
kernel of $B(u^\lambda)(v)$ span all of $\Wu^-(u^\lambda)$, so $B(u^\lambda)(v)=0$.  

Repeat this argument, decreasing $\lambda$ until $B(u^1)(v)=0$.  Hence,
\ref{k2} holds.
\end{proof}

The fact that \ref{k3} implies \ref{k2} is actually the key to Main
Theorems \ref{mainthm0} and \ref{mainthmS}; without it, 
the condition of Lemma~\ref{gnfnilp} regarding $\Wu^1_e(\varphi)$ and the
condition of Lemma~\ref{gnftriv} regarding $\Wu^-_e(\varphi)$ are incomparable.

Finally, the choice of basis $(u^1, \ldots, u^n)$ just described in a single
fiber $V^*_e$ may be extended to a local section $u:M^{(1)}\to \mathcal{F}$.
There is still some freedom in selecting the basis $(u^k)$, as there is always
freedom in choosing complementary subspaces and sections of exact sequences.
There is also the usual freedom in extending a particular basis $(u^k)$ of
$V^*_e$ to a local section $u:M^{(1)}\to \mathcal{F}$.  In any case, the coframe is
generic and adapted to $V \supset X^1 \supset S$.

\section{Elementary extension}\label{intext}
In this section, we study the ideal $\elem(\mathcal{I})$ and prove
Main Theorems~\ref{mainthm0}, \ref{mainthmS}, and \ref{foliation}.
The construction of $\elem(\mathcal{I})$ is similar to the notion of an integrable extension as in \cite{Bryant} and Definition 6.5.3
of \cite{Ivey2003}.

Let $\varpi:M^{(1)} \to M$ denote the bundle projection.
We have established a $\mathbb{C}^{m+s}$-valued coframe of $M^{(1)}$
comprised of 
\begin{equation}
(u^i)_{i=1,\ldots,L},\ (u^\alpha)_{\lambda=L+1,\ldots,n},\
(\theta^a)_{a=n+1,\ldots,m},\ \text{and}\ (\pi^a_\lambda)_{a \leq
s_\lambda}.
\label{ucoframe}
\end{equation}
So, $\mathrm{d}u^i \equiv \eta^i_\alpha \wedge u^\alpha \mod \{\theta^b, u^j\}$ for some
forms $\eta^i_\alpha$ that may be written explicitly as 
\begin{equation}
\eta^i_\alpha  \equiv H^i_{\alpha,\beta} u^\beta + \sum_{b \leq s_\mu}
H^{i,\mu}_{\alpha,b} \pi^b_\mu \mod \{ \theta^b, u^j\},
\end{equation}
where the $H$-coefficients are determined by the choice of coframe.

With respect to this coframe, the \emph{complexified}
prolongation system $\mathcal{I}^{(1)}_\mathbb{C}$ on $M^{(1)}$ is generated by
\begin{equation}
\begin{cases}
\theta^a,\\
\mathrm{d}\theta^a \equiv \pi^a_i \wedge u^i + \pi^a_\alpha \wedge
u^\alpha \mod \{ \theta^b\} 
\end{cases}
\label{proC}
\end{equation}
with independence condition $u^1 \wedge \cdots \wedge u^n \neq 0$.
The elementary system $\elem(\mathcal{I}) = \mathcal{I}^{(1)}_\mathbb{C} + \pair{\Xi}$, whose definition implicitly requires complexification, is generated as 
\begin{equation}
\begin{cases}
\theta^a,\\
u^i, \\
\mathrm{d}\theta^a \equiv \pi^a_i \wedge u^i + \pi^a_\alpha \wedge
u^\alpha \mod \{ \theta^b \},\\
\mathrm{d}u^i \equiv \phantom{\pi^a_i \wedge u^i +\  }\ \eta^i_\alpha \wedge u^\alpha \mod \{ \theta^b, u^j\} \\
\end{cases}
\label{elemNew}
\end{equation}
with independence condition $u^{L+1} \wedge \cdots \wedge u^n\neq 0$.
This is the same system described casually in Section~\ref{context}, but now
our coframe of $M^{(1)}$ is adapted to the problem.  See
Figure~\ref{figsubtab}.

\begin{figure}
\centering\begin{tikzpicture}
\draw (-0.5,1.5) node [black] {\Large $A=$};
\draw[very thick] (0,2) -- (3.5,2) -- (3.5,1) -- (0,1) -- cycle;
\draw (0.65,1.5) node [black] {\Large $\pi^a_i$};
\draw (2.5, 1.5) node [black] {\Large $\pi^a_\alpha$};
\draw[thin,dotted] (0,0) -- (3.5,0) -- (3.5,-1) -- (0,-1) -- cycle;
\draw[very thick] (1.5,0) -- (3.5,0) -- (3.5,-2.5) -- (1.5,-2.5) -- cycle;
\draw (0.65,-0.5) node [black] {\Large $\pi^a_i$};
\draw (2.5,-0.5) node [black] {\Large $\pi^a_\alpha$};
\draw (2.5,-1.75) node [black] {\Large $\eta^i_\alpha$};
\draw[thin,dotted] (5,0) -- (8.5,0) -- (8.5,-1) -- (5,-1) -- cycle;
\draw[thin,dotted] (6.5,0) -- (8.5,0) -- (8.5,-2.5) -- (6.5,-2.5) -- cycle;
\draw[thin,dotted] (7,0) --   (8.5,0) --   (8.5,-3) -- (7,-3) -- cycle;
\draw[very thick]  (8,0) --   (8.5,0) --   (8.5,-4) -- (8,-4) -- cycle;
\end{tikzpicture}                                                               
\caption{The tableaux of $\mathcal{I}$, $\elem(\mathcal{I})$, and so on.
Compare to Equations~\eqref{proC} and \eqref{elemNew}.}\label{figsubtab}
\end{figure}
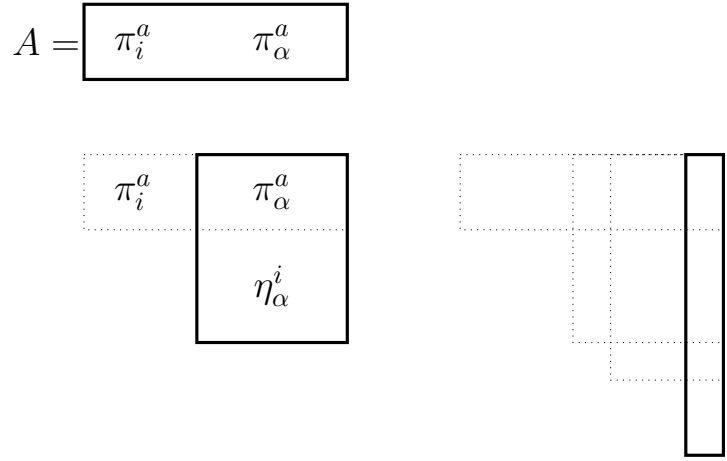

If $\elem(\mathcal{I})$ were itself involutive, then the
decomposition  $V^* = U^* \oplus Y^* \oplus X^*$ could be repeated
for $\elem(\mathcal{I})$.  However, a proof of 
Conjecture~\ref{invconj}  eludes the author, one obstruction being Conjecture~\ref{rankconj}, discussed
below.
Instead, we can prove a slightly weaker version: 
\begin{lemma}
Suppose that $(M,\mathcal{I})$ is an involutive exterior differential system.
Then the system $\elem(\mathcal{I})$ on $M^{(1)}$ admits a smooth family of
maximal integral manifolds of dimension $n{-}L$. 
\label{proelem}
\end{lemma}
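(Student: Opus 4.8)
\emph{Proof proposal.}
The plan is to produce these integral manifolds not by integrating $\elem(\mathcal{I})$ directly --- it need not even be involutive --- but as submanifolds of the ordinary integral manifolds of $(M,\mathcal{I})$ itself, using the coframe of $N$ adapted to $\pair{\Xi}$ furnished by Corollary~\ref{nicecoords}. This way the deep input is already packaged: Corollary~\ref{nicecoords} rests on Lemma~\ref{spaninv} together with the integrability of characteristics, Theorem~\ref{intchar}.

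Concretely, let $\iota:N\to M$ be a maximal ordinary integral manifold of $(M,\mathcal{I})$ --- in the analytic case these exist by Cartan--K\"ahler and vary in a smooth family, and in any event they are the objects the construction will parametrize --- with prolongation $\iota^{(1)}:N\to M^{(1)}$, an integral manifold of $\mathcal{I}^{(1)}_\mathbb{C}$. By Corollary~\ref{nicecoords}, $N$ carries coordinates $(y^1,\dots,y^n)$ (complexified where the definition of $\elem(\mathcal{I})$ requires it) such that $\mathrm{d}y^1,\dots,\mathrm{d}y^L$ span the rank-$L$ subbundle $\pair{\Xi}_N\subset T^*N$. Fix constants $c^1,\dots,c^L$, set
\[ D=\{\,y^1=c^1,\ \dots,\ y^L=c^L\,\}\subset N, \]
a submanifold of dimension $n-L$, and immerse it into $M^{(1)}$ by $\iota^{(1)}|_D$.

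I claim $D$ is a maximal integral manifold of $\elem(\mathcal{I})$ for the independence condition $u^{L+1}\wedge\cdots\wedge u^n\neq 0$. Reading off the generators~\eqref{elemNew}: the $\theta^a$ already pull back to zero on $N$, hence on $D$; each $u^i$ is a section of $\pair{\Xi}\subset V^*$, so $\iota^{(1)*}u^i$ lies in $\mathrm{span}(\mathrm{d}y^1,\dots,\mathrm{d}y^L)$, say $\iota^{(1)*}u^i=M^i_j\,\mathrm{d}y^j$ (sum over $j\le L$) with $(M^i_j)$ pointwise invertible, so that $u^i|_D=0$; and the two-form generators $\mathrm{d}\theta^a$ and $\mathrm{d}u^i$ restrict to zero because restriction commutes with $\mathrm{d}$. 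For the independence condition, write $\iota^{(1)*}u^\alpha=P^\alpha_j\,\mathrm{d}y^j+Q^\alpha_\beta\,\mathrm{d}y^\beta$; since $(\iota^{(1)*}u^k)$ is a coframe of $N$, the change-of-coframe matrix from $(\mathrm{d}y^k)$ is block lower triangular with diagonal blocks $(M^i_j)$ and $(Q^\alpha_\beta)$, both therefore invertible, so on $D$ we get $u^\alpha|_D=Q^\alpha_\beta\,\mathrm{d}y^\beta|_D$ with $(\mathrm{d}y^\beta|_D)$ a coframe of $D$, and hence $u^{L+1}\wedge\cdots\wedge u^n$ is nowhere zero on $D$. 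Thus $D$ is integral to $\elem(\mathcal{I})$ and of the dimension $n-L$ dictated by the independence condition (compare the tableau of $\elem(\mathcal{I})$ in Figure~\ref{figsubtab}), i.e.\ maximal. Smoothness of the family follows since, for fixed $N$, the level sets of $(y^1,\dots,y^L)$ foliate $N$ by an $L$-parameter family of such $D$, while the adapted coordinates of Corollary~\ref{nicecoords} --- obtained by integrating the involutive eikonal systems $\mathscr{E}(\Xi_N)$, $\mathscr{E}(\pair{\Xi}_N)$, $\mathscr{E}(S^\perp_N)$ --- depend smoothly on $N$ and its data.

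The main obstacle is precisely what forces the weaker conclusion: we cannot apply Cartan--K\"ahler to $\elem(\mathcal{I})$, since it may fail to be involutive (this is exactly the open Conjecture~\ref{invconj}), so the argument must borrow, through Corollary~\ref{nicecoords}, the hard involutivity of the characteristic eikonal system of Theorem~\ref{intchar}; absent that, there is no a priori coframe on $N$ adapted to $\pair{\Xi}$ and the submanifolds $D$ need not exist. The only genuinely computational point is the block-triangularity that keeps the independence condition non-degenerate along $D$, which is where the particular shape of the coframe~\eqref{ucoframe} enters.
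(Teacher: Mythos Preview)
Your proposal is correct and follows essentially the same route as the paper: construct $D$ as the intersection of $L$ independent characteristic hypersurfaces on an ordinary integral manifold $N$ (equivalently, as a level set of the adapted coordinates from Corollary~\ref{nicecoords}), and then observe that $\iota^{(1)}|_D$ is integral to $\elem(\mathcal{I})$. The paper differs only in presentation: it writes out the tableau conditions~\eqref{elem1condition} explicitly and checks that the restriction $Q=P|_X$ of the prolongation data satisfies them, and it remarks that $D$ is independent of the particular choice of $L$ hypersurfaces --- points you subsume in the abstract argument that $u^i|_D=0$ and that $\mathrm{d}$ commutes with restriction.
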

\begin{proof}[Proof of Lemma~\ref{proelem} and Main Theorem~\ref{foliation}]
Suppose that $\elem(\mathcal{I})$ is not Frobenius, for the claim is trivial in
that case.
Because $\elem(\mathcal{I})$ contains the involutive ideal $\mathcal{I}^{(1)}$,
we have \[\Var_{k}(\elem(\mathcal{I})) \subset \Var_k(\mathcal{I}^{(1)})\] for
all $k$, so the maximal dimension of ordinary integral elements of $\elem(\mathcal{I})$
cannot be greater than the maximal dimension for $\mathcal{I}^{(1)}$, namely
$n$.   Moreover, since $\elem(\mathcal{I})$ contains $L$ additional generating
1-forms that are independent of $\mathcal{I}^{(1)}$, the maximal dimension of
integral elements of $\elem(\mathcal{I})$ is at most $n-L$. 
Using the independence condition $u^{L+1}\wedge\cdots\wedge u^n \neq 0$, the maximal integral elements $f \in \Var_{n-L}(\elem(\mathcal{I}))$ may be written as 
\begin{equation}
f = \pair{
\pi^a_\lambda - \sum_{a \leq s_\lambda} Q^{a}_{\lambda,\alpha}u^\alpha
}^\perp \subset T_eM^{(1)},
\end{equation}
where the coefficients $Q^{a}_{\lambda,\alpha}$ define $Q \in A_e \otimes
X^{*}_e$ and are subject to the 2-form conditions from \eqref{elemNew},
\begin{equation}
\begin{cases}
\displaystyle{\sum_{b \leq s_\lambda}
B^{a,\lambda}_{\alpha,b}Q^{b}_{\lambda,\beta} =
\sum_{b \leq s_\lambda}B^{a,\lambda}_{\beta,b}Q^{b}_{\lambda,\alpha},}&\\
\displaystyle{H^{i}_{\alpha,\beta} + \sum_{b\leq s_\lambda}H^{i,\lambda}_{\alpha,b}Q^{b}_{\lambda,\beta}=
H^{i}_{\beta,\alpha} + \sum_{b\leq
s_\lambda}H^{i,\lambda}_{\beta,b}Q^{b}_{\lambda,\alpha},} &\forall
\alpha,\beta.
\end{cases}
\label{elem1condition}
\end{equation}
Let $E_e$ denote the subspace of $A_e \otimes X^{*}_e$ defined by the
condition~\eqref{elem1condition}.   The bundle $E$ over $M^{(1)}$ is the tableau
of $\elem(\mathcal{I})$, which is discussed further in Lemma~\ref{dXE}.

We can construct a smooth family of maximal integral manifolds in the following way:  

Fix $e \in M^{(1)}$ and consider the family of ordinary integral manifolds
$\iota:N \to M$, with $y \in N$ and $\iota_*(T_yN) = e$.  Cartan's test for
involutivity of $\mathcal{I}$ guarantees that this family is smooth,
parameterized by $s_\ell$ functions of $\ell$ variables.  For each such $N$,
choose $L$ independent elements of $\Xi_{N,y} \subset T^*_yN$ and use the
eikonal system of $\Xi_N$ to build $L$ independent characteristic hypersurfaces
through $y \in N$.  The intersection of these hypersurfaces is a submanifold $D
\subset N$ of dimension $n-L$.  The submanifold $D$ is unique in the sense that
it does not depend on the particular choice of $L$ characteristic
hypersurfaces, because $\xi|_{TD}=0$ for all $\xi \in \Xi_N$.  Of course,
$\xi|_{TD}=0$ also implies that $\iota^{(1)}|_D:D \to M^{(1)}$ is an integral manifold of $\elem(\mathcal{I})$
through $X^1_e$.

To be explicit, suppose $\hat{e}=\iota^{(1)}_*(T_yN) \subset T_eM^{(1)}$ is given as 
\begin{equation}
\hat{e} = 
\pair{
    u_i + \sum_{a \leq s_\lambda} P^{a}_{\lambda,i}\pi_a^\lambda,\  
    u_\alpha + \sum_{a \leq s_\lambda} P^{a}_{\lambda,\alpha}\pi_a^\lambda }
\end{equation}
where the coefficients define a section $P:N \to A \otimes V^*$.
The subspace $\iota^{(1)}_*(T_yD)$ satisfies $\theta^a =0$ and
$\mathrm{d}\theta^a =0$ because $N$ is integral to $\mathcal{I}^{(1)}$.
This implies the first condition in Equation~\eqref{elem1condition} is
satisfied for $Q^a_{\lambda,\alpha} = P^a_{\lambda,\alpha}$.  (Compare to
Lemma~\ref{dXE}.)
Note that Lemma~\ref{spaninv} implies that $\mathrm{d}u^i \wedge u^i$ pulls back to $0$
on $N$, as our choice of coframe $u:N \to \mathcal{F}_N$ determines 
a function $u^i:\pair{\Xi}_N \to \mathbb{C}$ solving the eikonal system.
Therefore, the coefficients
$Q^a_{\lambda,\alpha}= P^a_{\lambda,\alpha}$ also satisfy the second condition in
Equation~\eqref{elem1condition}.
That is, for any ordinary integral manifold $\iota:N \to M$ with corresponding
$P:N \to A^{(1)}\subset A \otimes V^*$, setting $Q=P|_X$ at $y \in N$ yields an
infinitesimal solution to \eqref{elem1condition}, and this solution extends to
an integral manifold $\iota^{(1)}|_D:D\to M^{(1)}$ of $\elem(\mathcal{I})$.

The submanifold $\Lambda$ in Main Theorem~\ref{foliation} is the usual
foliation for Cauchy retractions, as in Corollary~\ref{nicecoords}.
\end{proof}

\begin{proof}[Proof of Corollary~\ref{proelemC}]
In Lemma~\ref{proelem}, we are working with a linear Pfaffian ideal over an
analytic manifold with algebraic fiber of locally constant rank, all over $\mathbb{C}$, so the
Cartan--Kuranishi prolongation theorem implies that some prolongation of
$\elem(\mathcal{I})$ is involutive or empty.  By Lemma~\ref{proelem}, it is not
empty.
(See Theorem 4.2 in Chapter VI and Proposition 3.9 in Chapter VIII of
\cite{BCGGG} and the discussion therein.) 
\end{proof}

\begin{lemma}
Suppose that $(M,\mathcal{I})$ is an involutive exterior differential system.
The system $\elem(\mathcal{I})$ on $M^{(1)}$ descends to $M$ if and only if
$\elem(\mathcal{I})$ is Frobenius.
\label{descends}
\end{lemma}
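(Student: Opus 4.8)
The plan is to read off the Cauchy retractions of $\elem(\mathcal{I})$ directly from its structure equations \eqref{elemNew} in the adapted coframe \eqref{ucoframe}. The key preliminary observation is that the vertical bundle $\mathcal{V}=\ker(TM^{(1)}\to TM)$ is exactly the annihilator of $\{u^i,u^\alpha,\theta^a\}$ --- equivalently, the span of the vector fields dual to the fibre forms $\pi^a_\lambda$ --- since each of $u^i=\omega^i+K^i_\beta\omega^\beta$, $u^\alpha$, and $\theta^a$ is a $C^\infty(M^{(1)})$-combination of pullbacks of a coframe on $M$. Because every generating $1$-form of $\elem(\mathcal{I})$ (namely $\theta^a$ and $u^i$) already annihilates $\mathcal{V}$, the question of whether $\mathcal{V}\subset\mathfrak{g}(\elem(\mathcal{I}))$ reduces entirely to the behaviour of the generating $2$-forms $\mathrm{d}\theta^a$ and $\mathrm{d}u^i$ under contraction by vertical fields.

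The direction $(\Leftarrow)$ is immediate: if $\elem(\mathcal{I})$ is Frobenius it is algebraically generated by $\theta^a,u^i$, and for a Frobenius ideal with vanishing degree-zero part the Cauchy retractions are precisely the common annihilator of those $1$-forms, which contains $\mathcal{V}$; hence $\elem(\mathcal{I})$ descends.

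For $(\Rightarrow)$, I would contract \eqref{elemNew} with an arbitrary vertical field $V$. Using $V\lhk u^i=V\lhk u^\alpha=V\lhk\theta^a=0$ together with $\pi^a_\alpha\equiv B^{a,\lambda}_{\alpha,b}\pi^b_\lambda$ and $\eta^i_\alpha\equiv H^i_{\alpha,\beta}u^\beta+\sum_{b\leq s_\mu}H^{i,\mu}_{\alpha,b}\pi^b_\mu\pmod{\{\theta^b,u^j\}}$, one finds that $V\lhk\mathrm{d}\theta^a$ and $V\lhk\mathrm{d}u^i$ are, modulo $\{\theta^b,u^j\}$, equal to $(V\lhk\pi^a_\alpha)u^\alpha$ and $(V\lhk\eta^i_\alpha)u^\alpha$ respectively. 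Since the $u^\alpha$ with $\alpha>L$ are independent modulo the $1$-form part $\{\theta^a,u^i\}$ of $\elem(\mathcal{I})$, membership of these contractions in $\elem(\mathcal{I})$ forces $V\lhk\pi^a_\alpha=0$ and $V\lhk\eta^i_\alpha=0$ for every vertical $V$, i.e.\ $\pi^a_\alpha=0$ (equivalently the coefficients $B^{a,\lambda}_{\alpha,b}$ with $\alpha>L$ vanish) and $H^{i,\mu}_{\alpha,b}=0$. Feeding this back into \eqref{elemNew} collapses the generating $2$-forms to $\mathrm{d}\theta^a\equiv\pi^a_i\wedge u^i\equiv 0$ and $\mathrm{d}u^i\equiv H^i_{\alpha\beta}\,u^\beta\wedge u^\alpha\pmod{\{\theta^b,u^j\}}$, so the only surviving obstruction to the Frobenius condition is the skew part of $H^i_{\alpha\beta}$.

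To eliminate it, I would pull \eqref{elemNew} back along the prolongation $\iota^{(1)}\colon N\to M^{(1)}$ of an ordinary integral manifold through the given point $e$ (such a manifold exists because $e$ is K\"ahler-ordinary). On $N$ we have $\theta^a=0$, and Corollary~\ref{nicecoords} shows $\pair{\Xi}_N=\langle u^1,\ldots,u^L\rangle|_N$ is spanned by closed forms, hence Frobenius on $N$, so $\mathrm{d}u^i\equiv 0\pmod{\{u^1,\ldots,u^L\}|_N}$. Writing $\iota^{(1)*}\pi^b_\mu=P^b_{\mu,k}u^k$ for the section $P$ defining $\iota^{(1)}_*T_yN$, extracting the $u^\beta\wedge u^\alpha$ coefficient and antisymmetrizing gives $H^i_{\alpha\beta}-H^i_{\beta\alpha}+\big(H^{i,\mu}_{\alpha,b}P^b_{\mu,\beta}-H^{i,\mu}_{\beta,b}P^b_{\mu,\alpha}\big)=0$ at $e$; since $H^{i,\mu}_{\alpha,b}(e)=0$ by the previous step, $H^i_{\alpha\beta}(e)=H^i_{\beta\alpha}(e)$. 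As $e$ is arbitrary, $\mathrm{d}\theta^a$ and $\mathrm{d}u^i$ lie in the algebraic ideal generated by $\{\theta^a,u^i\}$ everywhere, so $\elem(\mathcal{I})$ is Frobenius. The main obstacle is precisely this last step: the vertical contraction by itself yields only $\pi^a_\alpha=0$ and $H^{i,\mu}_{\alpha,b}=0$, which is one symmetry identity short of Frobenius, and closing that gap genuinely requires importing the integrability of $\mathscr{E}(\pair{\Xi}_N)$ from Section~\ref{int}, via Corollary~\ref{nicecoords}, together with the existence of an ordinary integral manifold through each point of $M^{(1)}$. One should also take mild care that ``$\pi^a_\alpha=0$'' is being read off the fully reduced symbol relations, so that it is truly equivalent to the vanishing of the functions $B^{a,\lambda}_{\alpha,b}$ with $\alpha>L$.
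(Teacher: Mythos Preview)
Your argument is correct, and the easy direction $(\Leftarrow)$ matches the paper's. For $(\Rightarrow)$, however, you take a genuinely different route. The paper first reinterprets ``descends'' as the statement that $X^1_e$ is constant along fibres of $\varpi:M^{(1)}\to M$, then chooses a coframe $(\omega^k)$ on $M$ adapted to this common $\underline{X}^\perp$; in that basic coframe the key observations are $P^a_\alpha=0$ (so $\theta^a=\omega^a-P^a_i\omega^i$) and that $\eta^i_\alpha$ is automatically semi-basic, after which Lemma~\ref{proelem} is invoked to kill the torsion. By contrast, you stay in the $M^{(1)}$-coframe~\eqref{ucoframe} and read the Cauchy-retraction condition directly as the vanishing of vertical contractions, obtaining $B^{a,\lambda}_{\alpha,b}=0$ and $H^{i,\mu}_{\alpha,b}=0$ first, and then appeal to Corollary~\ref{nicecoords} on an ordinary integral manifold for the residual symmetry of $H^i_{\alpha\beta}$. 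Both arguments ultimately rest on the eikonal machinery of Section~\ref{int} and the existence of ordinary integral manifolds through each $e$ (so both tacitly need Cartan--K\"ahler or an equivalent hypothesis). Your approach has the pleasant side effect of exhibiting $B(u^\lambda)(u_\alpha)=0$ explicitly, which by Lemma~\ref{gnftriv} says $X^1\subset S$ and links \ref{tSdesc}$\Rightarrow$\ref{tSsat} in Main Theorem~\ref{mainthmS} without the intermediate passage through an $M$-adapted coframe; the paper's approach, on the other hand, makes the descent to $M$ more visibly geometric.
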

\begin{proof}

Suppose that $\elem(\mathcal{I})$ descends to $M$; that is, suppose that if
$\varpi(e) = \varpi(\tilde{e})=x \in M$, then $X^1_e = X^1_{\tilde{e}}$ as
subspaces of $\mathbb{P}T_xM \otimes \mathbb{C}$; call this subspace
$\underline{X}_x$, 
which has projective dimension $n{-}L{-}1$.  Let $\omega^1, \ldots,
\omega^m$ be a coframe of $M$ near $x$ that is generic for $\mathcal{I}$ and such that 
\begin{equation}
\underline{X}^\perp =
\pair{ \omega^1, \ldots, \omega^L, \omega^{n+1}, \ldots, \omega^m}.
\label{Xbar}
\end{equation}
Using $\varpi$ to pull back this coframe to $M^{(1)}$ (and omitting writing
 $\varpi^*$), the linear Pfaffian system
$\mathcal{I}^{(1)}$ is generated by 
\begin{equation}
\begin{cases}
\theta^a = \omega^a - P^a_i \omega^i - P^a_\alpha \omega^\alpha,\\
\mathrm{d}\theta^a \equiv \pi^a_i\wedge\omega^i + \pi^a_\alpha \wedge
\omega^\alpha \mod \{ \theta^b\},
\label{New}
\end{cases}
\end{equation}
and the linear Pfaffian system $\elem(\mathcal{I})$ is generated by 
\begin{equation}
\begin{cases}
\theta^a = \omega^a - P^a_i \omega^i - P^a_\alpha \omega^\alpha,\\
\omega^i,\\
\mathrm{d}\theta^a \equiv \pi^a_\alpha \wedge
\omega^\alpha \mod \{ \theta^b, \omega^j\},\\
\mathrm{d}\omega^i \equiv \eta^i_\alpha
\wedge\omega^\alpha \mod \{ \theta^b, \omega^j\}.
\label{elemNewF}
\end{cases}
\end{equation}
with independence condition $\omega^{L+1}\wedge\cdots\wedge\omega^n \neq 0$.
This system is Frobenius if and only if $\eta^i_\alpha\wedge\omega^\alpha \equiv
\pi^a_\alpha\wedge\omega^\alpha\equiv 0$.
(The forms $\pi^a_\alpha$ and $\eta^i_\alpha$ here are not identical to those
from Equation~\eqref{elemNew}, since the coframe is different, but they play
similar roles, so we use similar notation.)
Because $\omega^i$ is basic with respect to $\varpi$, it must be that
$\eta^i_\alpha = H^i_{\alpha,j} \omega^j + H^i_{\alpha,\beta}\omega^\beta$.  Because $\elem(\mathcal{I})$
admits maximal integral manifolds by Lemma~\ref{proelem}, the independence
condition implies that the ``torsion'' terms $H^{i}_{\alpha,\beta}$ are
symmetric, so $\eta^i_\alpha\wedge\omega^\alpha \equiv 0$.
Comparing Equations~\eqref{Xbar} and \eqref{elemNewF}, we see that 
$\underline{X}^\perp=\pair{\theta^a, \omega^i} = \pair{\omega^a, \omega^i }$, so $P^a_\alpha=0$.
Differentiate and use $\mathrm{d}\omega^i \equiv 0$
to obtain $\pi^a_\alpha \wedge \omega^\alpha \equiv \mathrm{d}\omega^a$, which must vanish
because the coframe $(\omega^k)$ is basic and integral manifolds exist.

Conversely, suppose that $\elem(\mathcal{I})$, generally written in the form
of Equation~\eqref{elemNew}, is Frobenius.  It suffices to show that the
generators of $\elem(\mathcal{I})$ are basic with respect to 
$\varpi:M^{(1)} \to M$, since this is equivalent to the condition that the
Cauchy reductions of $\elem(\mathcal{I})$ contain $\ker \varpi_*$.
Of course, the 1-forms $\theta^a$ and $u^i$ are semi-basic, meaning that they
annihilate the vertical subspace $\ker \varpi_*$.  The Frobenius condition is
$\mathrm{d}\theta^a \equiv \mathrm{d}u^i \equiv 0 \mod \{ \theta^j, u^b\}$, so
these generators are basic.
\end{proof}

\begin{proof}[Proof of Main Theorems~\ref{mainthm0} and \ref{mainthmS}]
Lemma~\ref{descends} shows that \ref{t0elem} and \ref{t0desc} are equivalent.
Statements \ref{t0span} and \ref{t0sat1} are dual, and these
trivially imply statements \ref{t0elem} and \ref{t0gnf} by dimension count.
Of course, in the case of Main Theorem~\ref{mainthm0}, the Frobenius system
$\elem(\mathcal{I})$ is actually the ``irrelevant'' differential ideal, whose
integral manifolds have dimension zero. 

Suppose that statement \ref{t0elem} holds.  Then the tableau of
$\elem(\mathcal{I})$ is empty, so $\pi^a_\alpha=0$.  In particular, $v=v^\alpha
u_\alpha \in X^1_e$ implies that $v \lhk \mathrm{d}\theta^a \equiv \pi^a_\alpha
=0$, so $v \in S_e$.  This is statement \ref{t0sat1}.

Suppose that statement \ref{t0gnf} holds, and suppose that $v \in V_e$.  By
Lemmas~\ref{gnfnilp} and \ref{gnftriv}, we have that $v \in S_e$ if and only if
$v \in X^1_e$, which is \ref{t0sat1}.
\end{proof}

\begin{rmk} A Warning:  Lemma~\ref{descends}, Main Theorem~\ref{mainthm0}, and Main
Theorem~\ref{mainthmS} do \emph{not} require or imply that $\Xi$ is constant in
each fiber of $M^{(1)}$. Even if $\elem(\mathcal{I})$ is Frobenius, the
$(\pi^a_i)$ portion of the tableau \eqref{proC} may vary over $M^{(1)}$.   Conversely, even if $\Xi$ is
locally constant, the system $\elem(\mathcal{I})$ may fail to descend to $M$ if
$\mathrm{d}\theta^a \not\equiv 0 \mod \{\theta^b, \xi^j\}$.
\end{rmk}

\section{Prolonged elementary extension}
\label{proext}

Finally, we want to try to understand the case when $\mathcal{I}$ is \emph{not}
elementary, so $\elem(\mathcal{I})$ is
\emph{not} Frobenius.  
The main question is ``How can we compute $\elem(\mathcal{I})$?''  For an involutive exterior differential system with Cartan integer $\ell =
\dim \Xi + 1 > 1$
and Cartan character $s_\ell= \deg \Xi >1$, the 
(nonlinear) characteristic variety $\Xi$ is difficult to compute and
parametrize.  One might expect that selecting $L$ ``random'' elements of $\Xi$ to generate
$\pair{\Xi}$---and therefore $\elem(\mathcal{I})$---would also be difficult.
If $\mathcal{M}$ is known, then computer algebra systems allow computation of $\sat(\mathcal{M})$ using
Gr\"obner bases. 
But, it would be preferable to bypass the computation of $\Xi$ and
$\mathcal{M}$ entirely, since $X^1$ is a linear subspace of $V$ defined by
linear symbol relations, $B^\lambda_k$.
Moreover, can we hope to compute $\elem^k(\mathcal{I})$ from $B^\lambda_k$
directly for all $k \geq 2$?  
The remaining results offer a possible approach to these questions.

Recall the skewing maps $\delta$, which define tableau prolongation and are essential
to the study of involutivity via Spencer cohomology:
\begin{equation}
\begin{split}
0 \to A^{(1)} \to A \otimes V^* &\overset{\delta}{\to} W \otimes \wedge^2V^* \to
H^{2}(A)\to 0,\\
0 \to A^{(2)} \to A^{(1)} \otimes V^* &\overset{\delta}{\to} W \otimes \wedge^3V^* \to
H^{3}(A)\to 0,\\
&\vdots\\
0 \to A^{(n-1)} \to A^{(n-2)} \otimes V^* &\overset{\delta}{\to} W \otimes
\wedge^{n}V^* \to H^{n}(A)\to 0.
\end{split}
\label{spencersequence}
\end{equation}
Involutivity of the linear Pfaffian system $(M^{(1)},\mathcal{I}^{(1)})$
is equivalent to $H^{\rho}(A)=0$ for all $\rho \geq 2$.    
This condition for a formal tableau is sometimes called ``formally
integrable,'' but since our tableau comes from a linear Pfaffian system, there
is no distinction.  See Theorem 5.16 in Chapter IV of \cite{BCGGG}.

Since $X^*$ is a fixed subspace of $V^*$, let $\delta_X$ denote the restricted
skewing map that imposes symmetry only on the $\otimes^{\rho+1} X^*$ component.
The condition $\delta_X = 0$ is strictly weaker than $\delta=0$.  In
particular, $A^{(\rho)} \otimes V^*$ projects onto $A^{(\rho)} \otimes X^*$, and the
induced image
$A^{(\rho+1)}|_X$ of $A^{(\rho+1)}$ satisfies $\delta_X=0$.

Recall that the tableau of $\elem(\mathcal{I})$ is the subspace $E \subset A
\otimes X^*$ as given by Equation~\eqref{elem1condition}.  Let $\Xi_E$ denote
the characteristic variety of $E$ in $X^*$.

\begin{lemma}
Let $E^{(0)}=E \subset A \otimes X^*$ and let
$E^{(\rho)} \subset E^{(\rho-1)}\otimes X^*$ denote the $\rho$th prolongation of the
tableaux $E$ of $\elem(\mathcal{I})$.  Then, as subspaces of $A \otimes
(\otimes^{\rho} X^*)$, we have
$A^{(\rho+1)}|_{X} \subset E^{(\rho)} \subset \ker \delta_X$.
\label{dXE}
\end{lemma}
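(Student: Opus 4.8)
The claim is a chain of two inclusions, $A^{(\rho+1)}|_X \subset E^{(\rho)} \subset \ker\delta_X$, inside $A\otimes(\otimes^\rho X^*)$. I would prove the two inclusions separately, both by induction on $\rho$, with the base case $\rho=0$ handled by comparing definitions. For $\rho=0$ the right-hand inclusion $E \subset \ker\delta_X$ is essentially the first line of Equation~\eqref{elem1condition}: that condition says precisely that the $A$-component of $Q\in A\otimes X^*$ has symmetric $\delta_X$-image (the second line of \eqref{elem1condition} is an extra ``torsion'' constraint coming from the $\mathrm{d}u^i$ generators, which only shrinks $E$ further, so it does not affect this inclusion). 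The base case of the left-hand inclusion, $A^{(1)}|_X \subset E$, is exactly the computation carried out in the proof of Lemma~\ref{proelem}: a section $P:N\to A^{(1)}\subset A\otimes V^*$ restricts to $Q=P|_X$ satisfying \emph{both} conditions of \eqref{elem1condition}, the first because $N$ is integral to $\mathcal{I}^{(1)}$ (so $\mathrm{d}\theta^a=0$), the second because Lemma~\ref{spaninv} forces $\mathrm{d}u^i\wedge u^i$ to pull back to zero. So at the level of fibers $A^{(1)}_e|_X \subset E_e$.

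\textbf{The inductive step.} For the right-hand inclusion, suppose $E^{(\rho-1)}\subset\ker\delta_X$ inside $A\otimes(\otimes^{\rho-1}X^*)$. By definition $E^{(\rho)}$ is the $\rho$th prolongation, i.e. the subspace of $E^{(\rho-1)}\otimes X^*$ on which the skewing map (restricted to $E^{(\rho-1)}$) into $A\otimes\wedge^2 X^*\otimes(\otimes^{\rho-1}X^*)$-type slots vanishes; unwinding, every element of $E^{(\rho)}\subset A\otimes(\otimes^{\rho}X^*)$ is fully symmetric in the last $\rho$ slots after the $A$-factor is absorbed appropriately, which is precisely the statement that it lies in $\ker\delta_X$ at level $\rho$. (This is really just the observation, noted right before the lemma, that $\delta_X=0$ is the defining condition of a prolongation performed inside the ``$X$-only'' symmetrization.) For the left-hand inclusion, suppose $A^{(\rho)}|_X\subset E^{(\rho-1)}$. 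Take $q\in A^{(\rho+1)}_e$; then $q\in A^{(\rho)}_e\otimes V^*_e$, and its restriction $q|_X\in A^{(\rho)}_e|_X\otimes X^*_e\subset E^{(\rho-1)}_e\otimes X^*_e$ by the inductive hypothesis. It remains to check that $q|_X$ satisfies the prolonged version of the $E$-conditions, i.e. that $q|_X\in E^{(\rho)}_e$: but $q\in A^{(\rho+1)}$ means $\delta q=0$ on all of $A\otimes\wedge^{\rho+2}V^*$, and restricting the symmetrization to the $X^*$ slots only weakens this, giving $\delta_X(q|_X)=0$ at the appropriate level — which is exactly the condition cutting $E^{(\rho)}$ out of $E^{(\rho-1)}\otimes X^*$ once one checks (as in the base case) that the extra $\eta^i_\alpha$-torsion conditions are automatically satisfied on a section pulled back from an integral manifold, or, fiberwise, that they impose nothing beyond $\delta_X=0$ on the image of $A^{(\rho+1)}$.

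\textbf{The main obstacle.} The delicate point is the bookkeeping in the middle term: $E$ is \emph{not} simply $A\otimes X^*\cap\ker\delta_X$ because of the second family of equations in \eqref{elem1condition} involving the torsion coefficients $H^i_{\alpha,\beta}$, $H^{i,\lambda}_{\alpha,b}$ coming from $\mathrm{d}u^i$. So the inclusion $E\subset\ker\delta_X$ is fine (extra equations only shrink $E$), but the inclusion $A^{(\rho+1)}|_X\subset E^{(\rho)}$ requires knowing that those torsion conditions and all their prolongations are \emph{automatically} met by the restriction of a genuine solution section — equivalently, fiberwise, that the $H$-torsion part of the prolonged $E$-conditions lies in the image of $A^{(\rho+1)}\to A\otimes\otimes^\rho X^*$. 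This is the part that genuinely uses Lemma~\ref{spaninv} (the involutivity of the eikonal system of $\pair{\Xi}$): it is what guarantees $\mathrm{d}u^i$ restricts compatibly, and at higher prolongation order one needs this propagated. I expect the cleanest route is to argue geometrically as in the proof of Lemma~\ref{proelem} — take an actual ordinary integral manifold $N$ with prolongation data $P:N\to A^{(\rho+1)}$-valued, restrict along the distribution $X^1_N$ annihilated by $\pair{\Xi}_N$, and observe the restricted jet automatically solves the $\elem(\mathcal{I})$-system to order $\rho$ — rather than verifying the prolonged torsion identities by brute force. The only thing to be careful about is that the lemma is stated fiberwise (as subspaces of $A\otimes\otimes^\rho X^*$, not as bundles of sections), so one must either note that ordinary integral manifolds through a prescribed $(\rho+1)$-jet exist in the analytic case, or give the purely algebraic version of the restriction argument; I would include a remark to that effect.
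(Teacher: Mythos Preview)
Your proposal is correct and follows essentially the same approach as the paper: the right-hand inclusion comes from recognizing the first line of \eqref{elem1condition} as $\delta_X Q=0$ with prolongation imposing further $X^*$-symmetry, and the left-hand inclusion comes from the eikonal argument in the proof of Lemma~\ref{proelem} together with its preservation under prolongation. The paper compresses your inductive step into a single sentence (``Involutivity, the characteristic variety, and the eikonal system are all preserved by prolongation of $A$, so this containment is preserved as well''), whereas you spell out the induction and isolate the $H$-torsion bookkeeping as the nontrivial point---but the content is the same.
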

\begin{proof}
As seen in Equation~\eqref{elemNew} and Figure~\ref{figsubtab}, the tableau of
$\elem(\mathcal{I})$ is a subspace of $(U \oplus Y \oplus W) \otimes X^*$, but
the tableau conditions \eqref{elem1condition} show that the $(\eta^i_\alpha) \in
(U\oplus Y)\otimes X^*$ term depends on the $(\pi^a_\lambda) \in A$
term when using our adapted coframe \eqref{ucoframe}.  Therefore, we may
consider the tableau of $\elem(\mathcal{I})$ to be the subspace $E \subset A \otimes
X^*$ specified by those conditions.  
The first condition in \eqref{elem1condition} is $\delta_X Q = 0$, and the
independence condition imposes symmetry over $\otimes^\rho X^*$ for $\rho \geq 1$.

For any $P \in A^{(1)}$, the proof of Lemma~\ref{proelem} says that the eikonal system
forces $P|_X \in E$ via the restriction of $P \in A^{(1)} \subset A \otimes
V^*$ to $A \otimes X^*$.  Involutivity, the characteristic variety, and the
eikonal system are all preserved by prolongation of $A$, so this containment is
preserved as well.
\end{proof} 

One problem with Conjecture~\ref{invconj} is that $E$ is fairly annoying to
compute; specifically, the $\eta^i_\alpha$ terms in Equation~\eqref{elemNew} and
Figure~\ref{figsubtab} depend on the local coframe chosen on $M^{(1)}$.  This
dependency can be ignored if the EDS $(M^{(1)},\mathcal{I}^{(1)})$ arises from a ``local
PDE in jet-space'' with the additional condition that the span of the
characteristic variety is locally constant in the coordinates $\mathrm{d}x^1, \ldots, \mathrm{d}x^n$.  
Then we can take the adapted coframe $(u^k)$ to be closed, giving $\eta^i_\alpha =0$.  

Let $\dot{A}$ denote the formal
tableau\footnote{It is ``formal'' in the sense that it did not arise \emph{a
priori} from a particular EDS.} obtained the projection of $A$ to $W\otimes X^*$.  Then $\dot{A}$ is
given by an exact sequence 
\begin{equation} 
\emptyset \to \dot{A} \to W \otimes X^* \overset{\dot{\sigma}}{\to} \dot{A}^\perp \to \emptyset 
\label{symtabX}
\end{equation}
induced by the sequence~\eqref{symtab}.  Since we have a good description of
$A^\perp$, it is easy to write 
\begin{equation}
\begin{split}
\dot{A} 
&= \left\{ \pi|_X,\ \pi \in A \right\} = \left\{  \pi^a_\alpha (w_a \otimes u^\alpha),\ \pi
\in A \right\}\\
&= \left\{ B^{a,\lambda}_{\alpha,b}\pi^a_\lambda (w_a \otimes u^\alpha),\ \pi \in A\right\}\\
&= \pair{  B^{a,\lambda}_{\alpha,b} (w_a \otimes u^\alpha) }
\end{split}
\end{equation}
and 
\begin{equation}
\begin{split}
\dot{A}^\perp &=
\left\{ K=K^\alpha_a w^a \otimes u_\alpha \in (W \otimes X^*)^* ~:~ 
K^\alpha_a B^{a,\lambda}_{\alpha,b} =0,\ \forall\ b \leq s_\lambda  \right\}.
\end{split}
\end{equation}
The characteristic variety of $\dot{A}$ is 
\begin{equation}
\dot{\Xi} = \left\{ \xi \in X^* ~:~  \ker (K^\alpha_a \xi_\alpha w^a) \neq
0\ \forall K \in \dot{A}^\perp\right\}.
\end{equation}
The skewing map on $X^*$ defines a formal prolongation of $\dot{A}$, 
\begin{equation}
0 \to \dot{A}^{(1)} \to \dot{A} \otimes X^* \overset{\delta_X}{\to} W \otimes \wedge^2 X^* \to
H^{2}(\dot{A}) \to 0.
\end{equation}
The characteristic variety of $\dot{A}^{(1)}$ is 
\begin{equation}
\begin{split}
\dot{\Xi}^{(1)} &= \left\{ \xi \in X^* ~:~  \exists \pi \in A,\ \delta_X(\pi|_X
\otimes \xi) = 0 \right\}\\
 &= \left\{ \xi \in X^* ~:~  \exists \pi \in A,\ \delta_X(\pi
\otimes \xi) = 0 \right\}.
\end{split}
\end{equation}

Compare the next lemma to Corollary~\ref{cor:restrict} and Theorem A in
\cite{Guillemin1968}, which is much harder
due to a looser notion of involutivity for formal tableaux.
\begin{lemma}
If $(M,\mathcal{I})$ is involutive, then $H^{\rho}(\dot{A})=0$ for all $\rho \geq 2$.
\label{killspencer}
\end{lemma}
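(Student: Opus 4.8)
The plan is to deduce the vanishing of the Spencer cohomology of $\dot{A}$ directly from that of $A$, using nothing but the fact that $\dot{A}$ is the image of $A$ under a coordinate--subspace restriction. Fix $e \in M^{(1)}$, write $Z^*_e = U^*_e \oplus Y^*_e = \pair{\Xi}_e$ for the span of the characteristic variety and $X^*_e$ for the chosen complement, so $V^*_e = Z^*_e \oplus X^*_e$ and $\dot{A} = \mathrm{pr}_X(A)$ for the restriction map $\mathrm{pr}_X \colon W_e \otimes V^*_e \to W_e \otimes X^*_e$, $\pi \mapsto \pi|_X$. First I would extend $\mathrm{pr}_X$ to all symmetric and exterior powers --- as restriction of multilinear forms to $X_e := \Ann(Z^*_e) \subset V_e$, giving $W_e \otimes \mathrm{Sym}^{\rho}V^*_e \to W_e \otimes \mathrm{Sym}^{\rho}X^*_e$ and $W_e \otimes \wedge^{\rho}V^*_e \to W_e \otimes \wedge^{\rho}X^*_e$ --- and record the two elementary compatibilities that drive the argument: $\mathrm{pr}_X$ commutes with the contraction $\iota_v$ whenever $v \in X_e$, and on exterior powers it intertwines the skewing operator $\delta$ with $\delta_X$ (since $\delta$ is total antisymmetrization, the purely-$X^*_e$ part of $\delta(P \otimes \xi)$ only sees $\mathrm{pr}_X P$ and the $X^*_e$-component of $\xi$).

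Next I would establish, by induction on $\rho \geq 0$, the containment $\mathrm{pr}_X\!\left(A^{(\rho)}\right) \subseteq \dot{A}^{(\rho)}$ inside $W_e \otimes \mathrm{Sym}^{\rho+1}X^*_e$. The base case is the defining identity $\mathrm{pr}_X(A) = \dot{A}$. For the inductive step I would use the standard characterization $A^{(\rho)} = \{\,P \in W_e \otimes \mathrm{Sym}^{\rho+1}V^*_e : \iota_v P \in A^{(\rho-1)} \text{ for all } v \in V_e\,\}$ together with its analogue for $\dot{A}^{(\rho)}$ (contracting only by $v \in X_e$): given $P \in A^{(\rho)}$ and $v \in X_e$, the first compatibility yields $\iota_v(\mathrm{pr}_X P) = \mathrm{pr}_X(\iota_v P) \in \mathrm{pr}_X(A^{(\rho-1)}) \subseteq \dot{A}^{(\rho-1)}$, whence $\mathrm{pr}_X P \in \dot{A}^{(\rho)}$.

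Finally, the cohomology vanishes by a diagram chase in \eqref{spencersequence}. For $\rho \geq 2$, the sequence \eqref{spencersequence} applied to the tableau $\dot{A} \subset W_e \otimes X^*_e$ identifies $H^{\rho}(\dot{A})$ with $\coker\!\left(\delta_X \colon \dot{A}^{(\rho-2)} \otimes X^*_e \to W_e \otimes \wedge^{\rho} X^*_e\right)$. By the second compatibility, the $\wedge^{\rho}X^*_e$-component of $\delta\!\left(A^{(\rho-2)} \otimes V^*_e\right)$ is exactly $\delta_X\!\left(\mathrm{pr}_X(A^{(\rho-2)}) \otimes X^*_e\right)$, which by the induction sits inside $\delta_X\!\left(\dot{A}^{(\rho-2)} \otimes X^*_e\right)$. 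But $(M,\mathcal{I})$ involutive means $H^{\rho}(A) = 0$, so $\delta \colon A^{(\rho-2)} \otimes V^*_e \to W_e \otimes \wedge^{\rho} V^*_e$ is surjective and its $\wedge^{\rho}X^*_e$-component is all of $W_e \otimes \wedge^{\rho} X^*_e$. Hence $\delta_X \colon \dot{A}^{(\rho-2)} \otimes X^*_e \to W_e \otimes \wedge^{\rho} X^*_e$ is surjective and $H^{\rho}(\dot{A}) = 0$, as desired.

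I expect no genuine obstacle; the only care needed is the bookkeeping of the two compatibilities and the matching of prolongation indices in \eqref{spencersequence}. It is worth flagging that the characteristic variety $\Xi$, its span, and the hypothesis $\pair{\Xi}_e \cap X^*_e = \emptyset$ are never used: the same proof shows that the projection of any involutive tableau onto $W_e \otimes X^*_e$ has vanishing Spencer cohomology in degrees $\geq 2$ for \emph{any} splitting $V^*_e = Z^*_e \oplus X^*_e$. This is precisely why the lemma is comparatively soft, in contrast with Theorem~A of \cite{Guillemin1968}, where restriction to the characteristic directions $U^*$ forces one to work with a formal tableau that need not be involutive in the strong sense.
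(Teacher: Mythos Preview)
Your argument is correct and is precisely the approach the paper takes: the paper's two-sentence proof just says that $A \to \dot{A}$ and $V^* \to X^*$ are surjective and commute with $\delta$, and you have unpacked this into the explicit induction $\mathrm{pr}_X(A^{(\rho)}) \subseteq \dot{A}^{(\rho)}$ and the surjectivity chase through \eqref{spencersequence}. Your closing remark that the splitting $V^*_e = Z^*_e \oplus X^*_e$ is arbitrary---so the lemma holds for any coordinate-subspace restriction of an involutive tableau---is a correct and worthwhile observation that the paper leaves implicit.
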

\begin{proof}
If $(M,\mathcal{I})$ is involutive, then $H^{\rho}(A)=0$ for all $\rho \geq
2$.  The maps $A \to \dot{A}=A|_X$ 
and $V^* \to X^*$ are surjective and commute with $\delta$, so the same applies
to $\dot{A}$.
\end{proof}
Lemma~\ref{killspencer} says that, if we can associate $\dot{A}$ with a linear
Pfaffian exterior differential system, then that system is involutive.  This is
useful in the local PDE case where $\pair{\Xi}$ is locally constant, for then $E=\dot{A}$ because $u^\lambda = \mathrm{d}x^\lambda$. Conjecture~\ref{invconj} and Main Theorem~\ref{mega-foliation} follow immediately.

Even in the general case, our only hope for a general result regarding
$\elem^2(\mathcal{I})$ is if $(\eta^i_\alpha)$ is determined by
$(\pi^a_\alpha)$, so $\dot{A}$ is still worth studying.

\begin{lemma}
As subsets of $X^*$, we have $\Xi^{(1)}_E \subset \Xi_E \subset \dot{\Xi}^{(1)}
\subset \dot{\Xi}$.
\label{elemchar}
\end{lemma}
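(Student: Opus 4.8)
The statement is a chain of three inclusions; the plan is to obtain the two outer ones, $\Xi^{(1)}_E \subseteq \Xi_E$ and $\dot{\Xi}^{(1)} \subseteq \dot{\Xi}$, from a general property of prolongation, and the middle one, $\Xi_E \subseteq \dot{\Xi}^{(1)}$, from Lemma~\ref{dXE}. For the outer inclusions I would use the general fact that the characteristic variety of a prolongation $B^{(1)}$ of a tableau $B \subseteq \hat{W} \otimes X^*$ is contained in that of $B$: if $[\xi]$ is characteristic for $B^{(1)}$, then $Q \otimes \xi \in B^{(1)}$ for some nonzero $Q \in B$, and since $B^{(1)}$ is the kernel of $\delta\colon B \otimes X^* \to \hat{W} \otimes \wedge^2 X^*$, the element $Q \otimes \xi$ is symmetric in its two $X^*$ arguments; expanding $Q = \sum_\rho \hat{w}_\rho \otimes \eta^\rho$ in a basis of $\hat{W}$, this symmetry forces each $\eta^\rho$ to be proportional to $\xi$, so $Q = z \otimes \xi$ for a nonzero $z \in \hat{W}$, whence $z \otimes \xi \in B$ and $[\xi]$ is characteristic for $B$. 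Taking $B = E$ (with $\hat{W} = A$) gives the first inclusion, and $B = \dot{A}$ (with $\hat{W} = W$) the third. (Equivalently, these are the standard fact that the symbol module of a prolongation has the same projectivized support as that of the original tableau; compare Chapter~V of \cite{BCGGG}.)

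The substantive step is the middle inclusion $\Xi_E \subseteq \dot{\Xi}^{(1)}$, where Lemma~\ref{dXE} enters. Suppose $[\xi] \in \Xi_E$, witnessed by a nonzero $\pi \in A$ with $\pi \otimes \xi \in E$; here $\pi \otimes \xi \in A \otimes X^*$ is the element with coefficient $\xi_\alpha \pi$ in the $u^\alpha$ slot. By Lemma~\ref{dXE} with $\rho = 0$ we have $E \subseteq \ker \delta_X$, so $\delta_X(\pi \otimes \xi) = 0$; writing this out through the first line of \eqref{elem1condition} with $Q = \pi \otimes \xi$ yields exactly
\[
(\pi|_X) \wedge \xi = 0 \quad \text{in } W \otimes \wedge^2 X^* .
\]
Hence $(\pi|_X) \otimes \xi$ lies in $\ker\bigl(\delta_X\colon \dot{A} \otimes X^* \to W \otimes \wedge^2 X^*\bigr) = \dot{A}^{(1)}$, and $\pi|_X \in \dot{A} = A|_X$; so $\pi|_X$ is a rank-one witness placing $[\xi]$ in $\dot{\Xi}^{(1)}$, \emph{provided} $\pi|_X \neq 0$.

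The nonvanishing of the witness $\pi|_X$ is the step I expect to be the main obstacle. The restriction $A \to A|_X = \dot{A}$ may have a nonzero kernel, and a priori a characteristic witness $\pi$ could lie in it, entering $E$ only through the torsion terms $\eta^i_\alpha$ that are controlled by the second line of \eqref{elem1condition}. I see two ways to close this. One is to assume that $(\eta^i_\alpha)$ is determined by $(\pi^a_\alpha)$, which holds whenever $\pair{\Xi}$ is locally constant and the adapted coframe can be taken closed: then $E = \dot{A}$ and the kernel of $A \to \dot{A}$ never intervenes. The other is a direct argument: if $0 \neq \pi \in A$ has $\pi|_X = 0$ and $\pi \otimes \xi \in E$, then the second line of \eqref{elem1condition} collapses to a linear identity relating $\xi$ and the first-column data $(\pi^a_\lambda)$, from which one extracts a nonzero rank-one element of $\dot{A}$ over $\xi$ and concludes $[\xi] \in \dot{\Xi}^{(1)}$ anyway. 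Once this is settled, the three inclusions compose as claimed, and passing from the dense open subset of $M^{(1)}$ where the relevant ranks are maximal to the closed locus is routine lower semicontinuity of rank.
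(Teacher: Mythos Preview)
Your approach coincides with the paper's: the outer inclusions are dismissed as the standard fact that prolongation shrinks the characteristic variety (the paper cites the discussion around (79) in Chapter~V of \cite{BCGGG}), and the middle inclusion is obtained by taking a rank-one witness $\pi\otimes\xi\in E$, invoking the first line of \eqref{elem1condition} to get $\delta_X(\pi\otimes\xi)=0$, and reading off $(\pi|_X)\otimes\xi\in\dot{A}^{(1)}$. The paper also records the weaker inclusion $\Xi_E\subset\dot{\Xi}$ by the same projection-of-rank-one-matrices observation applied to the full $(\pi^a_\alpha,\eta^i_\alpha)$ block.

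You have, however, put your finger on a point the paper does \emph{not} address: the paper simply asserts ``This is rank-one, so $\xi$ lies in the characteristic variety of $\dot{A}^{(1)}$'' without verifying that $\pi|_X\neq 0$. Your worry is legitimate---the kernel of $A\to\dot{A}$ can be nonzero, so a priori a characteristic witness $\pi$ for $E$ could restrict to zero. The paper's alternative argument for $\Xi_E\subset\dot{\Xi}$ has the same lacuna (the ``upper matrix'' could vanish). Your first proposed fix, reducing to the case $E=\dot{A}$ when $\pair{\Xi}$ is locally constant, is exactly the hypothesis under which the paper later uses this lemma, so it suffices for the applications; your second proposed fix via the second line of \eqref{elem1condition} is only sketched and would need to be made precise to stand on its own. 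In short: you have reproduced the paper's argument and been more scrupulous about its weak point than the paper itself.
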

\begin{proof}
The left-most and right-most inclusions are standard; prolongation can
only increase the characteristic ideal of a tableau.  (See the discussion leading
to statement (79) in Chapter V of \cite{BCGGG}.)

Suppose that $\xi \in \Xi_E$.
Then there exists $\pi \in A$ such that $\pi \otimes \xi$ is a
rank-one element of $E \subset A \otimes X^*$.  The first
condition in \eqref{elem1condition} means $\delta_X Q = 0$, so $\pi|_X \in \dot{A}$ and $(\pi|_X)\otimes \xi \in
\dot{A}^{(1)}$.   This is rank-one, so $\xi$ lies in the characteristic variety
of $\dot{A}^{(1)}$.  

To see the weaker inclusion $\Xi_E \subset \dot{\Xi}$, consider the generating 2-forms \eqref{elemNew} and
Figure~\ref{figsubtab}.  If the combined matrix $\left( \pi^a_\alpha w_a +
\eta^i_\alpha u_i \right)\otimes u^\alpha \in E$ is rank-one, then the upper
matrix $\pi^a_\alpha (w_a \otimes u^\alpha) \in \dot{A}$ is rank-one over the
same fiber.
\end{proof}
If we knew these were involutive, then the degree of $\xi$ in the variety would
be seen to fall by a constant: the nullity of the projection $\pi\mapsto \pi|_X$.

The next conjecture would help establish a general equivalence between
$\dot{A}$ and $E$.

\begin{conj}
Suppose that $(M,\mathcal{I})$ is an involutive exterior differential system. 
The characteristic sheaf of $E$ equals the characteristic sheaf of $\dot{A}$.
\label{rankconj}
\end{conj}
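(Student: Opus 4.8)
Both characteristic sheaves live over the fibre $X^*_e$, each being the rank-drop locus of a restricted symbol: for $\dot{A}$ it is $\dot{\sigma}_\xi$ from \eqref{symtabX}, detecting $w\otimes\xi\in\dot{A}=A|_X$, and for $E$ it is the analogous map detecting rank-one elements $\pi\otimes\xi\in E\subset A\otimes X^*$ (this is exactly how $\Xi_E$ was read off in the proof of Lemma~\ref{elemchar}). So the conjecture asks that these two characteristic sheaves --- equivalently, the two rank-drop subschemes of $X^*_e$ --- coincide, and I would approach it by exploiting the restriction map $A\to\dot{A}$, $\pi\mapsto\pi|_X$, which already underlies Lemmas~\ref{dXE}, \ref{killspencer}, and \ref{elemchar}. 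The easy half is recorded there: the first line of \eqref{elem1condition} is precisely $\delta_X Q=0$, so every $Q\in E$ restricts to $Q|_X\in\dot{A}^{(1)}$, yielding $\Xi_E\subset\dot{\Xi}^{(1)}\subset\dot{\Xi}$ and a one-sided comparison of stalks.

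First I would close this chain to $\Xi_E=\dot{\Xi}$, scheme-theoretically. Lemma~\ref{killspencer} is the lever: involutivity of $(M,\mathcal{I})$ makes $\dot{A}$ an involutive formal tableau, so $\dot{A}$ and $\dot{A}^{(1)}$ present the same characteristic sheaf (standard facts on characteristic modules of involutive tableaux, Chapter V of \cite{BCGGG}), collapsing $\dot{\Xi}^{(1)}=\dot{\Xi}$. It then remains to show every rank-one $w\otimes\xi\in\dot{A}$ lifts to some $\pi_1\in A$ with $\pi_1\otimes\xi\in E$. The first line of \eqref{elem1condition} merely forces $\pi_1|_X$ to be proportional to $\xi$ in each $W$-row, which cuts out a linear subspace $A_\xi\subset A$ of candidates; the content is the second (torsion) line of \eqref{elem1condition}, which for $Q=\pi_1\otimes\xi$ becomes
\[
(H^i_{\alpha,\beta}-H^i_{\beta,\alpha})+(\pi_1)^b_\lambda\big(H^{i,\lambda}_{\alpha,b}\xi_\beta-H^{i,\lambda}_{\beta,b}\xi_\alpha\big)=0,\qquad\forall\,\alpha,\beta .
\]
The conjecture thus reduces to the claim that $A_\xi$ meets this affine locus for all $\xi\in\dot{\Xi}$, uniformly enough to match multiplicities.

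To produce such a $\pi_1$ I would proceed in two moves. First, Lemma~\ref{proelem} guarantees that $\elem(\mathcal{I})$ genuinely admits integral manifolds, hence its torsion is absorbable; this should remove the antisymmetric constants $H^i_{[\alpha,\beta]}$ after a redefinition of the $Q^a_{\lambda,\alpha}$, leaving a condition purely in the $H^{i,\lambda}_{\alpha,b}$. Second, play the Bianchi identities $\mathrm{d}^2\theta^a=0$ and $\mathrm{d}^2 u^i=0$ on $M^{(1)}$ against the Guillemin normal form of Theorem~\ref{thm-gnf+} (the commutation relations among the $B^\lambda_k$) and against Lemma~\ref{spaninv} (which forces $\mathrm{d}u^i\wedge u^i$ to vanish on every integral manifold, rigidifying $\eta^i_\alpha$); the hope is that these identities express the $H^{i,\lambda}_{\alpha,b}$-dependence as a $\operatorname{Sym}(X)$-linear consequence of the $B^{a,\lambda}_{\alpha,b}$-relations, so the ``$\eta$-part'' of the symbol module of $E$ becomes redundant and the comparison map is an isomorphism of sheaves.

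The hard part --- and the reason this remains a conjecture --- is precisely that last claim about the $H^{i,\lambda}_{\alpha,b}$: these coefficients are born from the non-canonical, coframe-dependent structure equations of $M^{(1)}$ rather than from the symbol of $\mathcal{I}$, so the Bianchi identity alone gives no formal reason they should be pinned down. One would need to show that the gauge freedom in the coframe on $M^{(1)}$ can always be spent to make $\eta^i_\alpha$ a function of $\pi^a_\alpha$ --- the scenario flagged just before the conjecture, and the reason the locally-constant-$\pair{\Xi}$ PDE case of Section~\ref{intext} already works, there being a closed coframe with $\mathrm{d}u^i=0$ and hence $E=\dot{A}$ outright. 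Absent that, I would try to interpolate from the $\eta^i_\alpha\equiv 0$ baseline using the eigenvector dictionary of Lemma~\ref{lemma-eigen} and Corollary~\ref{cor:backeigen} to track how characteristics of $\mathcal{I}$ inside $\pair{\Xi}$ constrain the ``shadow'' characteristics $\dot{\Xi}$ in $X^*$; a semicontinuity/specialization argument degenerating $E$ to $\dot{A}$ is a plausible alternative, but only if one can first show the characteristic sheaf varies semicontinuously in the right direction along such a degeneration.
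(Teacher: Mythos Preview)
This statement is a \emph{conjecture} in the paper, not a theorem: the paper provides no proof, and the author explicitly writes in the Discussion that he does ``\emph{not} have a strong sense of whether the Conjectures are actually true.'' So there is no paper proof to compare your proposal against.

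Your proposal is, appropriately, not a proof either --- it is a plan that correctly isolates the obstruction. You have the easy half right (Lemma~\ref{elemchar} gives $\Xi_E\subset\dot\Xi^{(1)}\subset\dot\Xi$, and Lemma~\ref{killspencer} collapses the latter two), and you correctly identify the hard half as lifting rank-one elements of $\dot A$ through the second, coframe-dependent line of \eqref{elem1condition}. Your diagnosis that the $H^{i,\lambda}_{\alpha,b}$ are the uncontrolled quantities, and that the locally-constant-$\langle\Xi\rangle$ PDE case succeeds precisely because a closed coframe kills them, matches the paper's own discussion preceding the conjecture. But the two ``moves'' you propose --- absorbing torsion via Lemma~\ref{proelem} and then invoking Bianchi identities plus Theorem~\ref{thm-gnf+} to force the $H^{i,\lambda}_{\alpha,b}$ into $\operatorname{Sym}(X)$-linear dependence on the $B^{a,\lambda}_{\alpha,b}$ --- are hopes, not arguments. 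Torsion absorption only guarantees \emph{some} solution to \eqref{elem1condition}, not one of the rank-one form $\pi_1\otimes\xi$ you need; and there is no mechanism offered by which $\mathrm{d}^2=0$ on $M^{(1)}$ would eliminate the coframe dependence of $\eta^i_\alpha$, which is exactly the freedom the paper flags as the obstacle. Your semicontinuity/degeneration fallback is similarly only a gesture.

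In short: your analysis of \emph{why} the conjecture is hard is accurate and agrees with the paper, but the proposal does not close the gap, and you should not present it as a proof.
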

A weaker version would suffice if we merely want to compute
$\elem^2(\mathcal{I})$, regardless of its involutivity.
\begin{conj}
Suppose that $(M,\mathcal{I})$ is an involutive exterior differential system. 
Then $\langle\dot{\Xi}\rangle=\pair{\Xi_E}$ as subspaces of $X^*$.
\label{rankconj1}
\end{conj}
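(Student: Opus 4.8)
Lemma~\ref{elemchar} already gives $\Xi_E\subseteq\dot\Xi$, hence $\pair{\Xi_E}\subseteq\langle\dot\Xi\rangle$, so the whole content is the reverse inclusion, which is cleanest to attack in the dual form $\pair{\Xi_E}^\perp\subseteq\langle\dot\Xi\rangle^\perp$ inside $X$. The plan is to express both annihilators through the symbol of $\mathcal{I}$ by means of the nilpotency criterion of Lemma~\ref{gnfnilp}. Since $\dot A$ is involutive by Lemma~\ref{killspencer}, applying Definition~\ref{sat1} and the argument of Lemma~\ref{gnfnilp} to $\dot A$ identifies $\langle\dot\Xi\rangle^\perp\subseteq X$ as exactly the set of $v$ for which every restricted Guillemin endomorphism of $\dot A$ is nilpotent. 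On the other side, pass to a prolongation $E^{(\rho)}$ that is involutive (Corollary~\ref{proelemC} in the analytic case); since prolongation does not change the characteristic variety, $\pair{\Xi_E}^\perp$ is the set of $v$ for which every restricted Guillemin endomorphism of $E^{(\rho)}$ --- and, by Lemma~\ref{dXE} together with the stability of Guillemin normal form under prolongation, of $E$ itself --- is nilpotent. It therefore suffices to compare the Guillemin endomorphisms attached to $\dot A$ with those attached to $E$, both as families indexed by $v\in X$.

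The second step is to read the symbol of $E$ off Equation~\eqref{elem1condition} in the adapted coframe \eqref{ucoframe}. The first family of equations there is precisely the symbol of $\dot A$ --- it is the condition $\delta_X Q=0$, matching the explicit description of $\dot A$ and $\dot A^\perp$ preceding Lemma~\ref{killspencer} --- while the second family couples the $\eta^i_\alpha$-block to the $\pi^a_\lambda$-block through the coframe-dependent torsion coefficients $H^i_{\alpha,\beta}$ and $H^{i,\lambda}_{\alpha,b}$. Because the $\eta$-block lies in $(U\oplus Y)\otimes X^*$, strictly ``below'' the $A$-block in the nested tableau of Figure~\ref{figsubtab}, this coupling is block-lower-triangular relative to the filtration of $E$. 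The remaining goal is then the purely linear-algebraic assertion that such a triangular torsion coupling cannot disturb the non-nilpotent part of the symbol: a vector $v\in X$ makes the $\dot A$-endomorphisms nilpotent if and only if it makes the $E$-endomorphisms nilpotent. The implication from $E$ to $\dot A$ is the triangular fact that the diagonal block carries the eigenvalues; the converse needs the off-diagonal torsion block to contribute no new eigenvalue for $E$, and this is where the identity summand of \eqref{IB} and the eikonal input of Lemma~\ref{proelem} should be brought in --- on the integral manifolds $D$ built there the sections $u^i$ solve the eikonal system of $\pair{\Xi}$ (Lemma~\ref{spaninv}), the antisymmetric torsion $H^i_{[\alpha,\beta]}$ is absorbed, and the diagonal eigenvalues of $E$ are pinned to those of $\dot A$.

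The principal obstacle is exactly the feature used informally above: the coefficients $H^i_{\alpha,\beta}$ and $H^{i,\lambda}_{\alpha,b}$ are not invariants of $\mathcal{I}$ but depend on the local coframe of $M^{(1)}$ chosen to extend $\pair{\Xi}_e$, and there is no evident reason it can be arranged so that the off-diagonal block respects the generalized-eigenspace decomposition of the $\dot A$-endomorphisms. Making the reduction above rigorous therefore seems to require either the stronger Conjecture~\ref{rankconj}, proved by a coframe-free Spencer-cohomological identification of the characteristic sheaf of $E$ with that of $\dot A$ in the spirit of Lemma~\ref{killspencer} and Corollary~\ref{cor:restrict}, or a normal-form theorem producing, after a fibrewise change of coframe on $M^{(1)}$ compatible with the flag~\eqref{elemflag}, a torsion block that is block-triangular with respect to the eigenspaces coming from $\dot\Xi$. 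Either route would reduce Conjecture~\ref{rankconj1} to a finite statement about a commuting family of endomorphisms under a triangular perturbation, and establishing that normalization (or the sheaf identification) is the crux.
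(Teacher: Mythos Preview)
The paper does not prove this statement: it is stated as a \emph{conjecture} and left open. In the Discussion section the author writes explicitly that he does \emph{not} have a strong sense of whether the Conjectures are true, and invites readers to search for examples where Conjecture~\ref{rankconj} holds or fails. So there is no ``paper's own proof'' to compare against.

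Your proposal is honest about this --- you label it ``Toward a proof'' and end by naming the crux rather than claiming to resolve it --- so in that sense it is appropriately calibrated. A few remarks on the content. Your first paragraph correctly extracts the easy inclusion $\pair{\Xi_E}\subset\langle\dot\Xi\rangle$ from Lemma~\ref{elemchar}, and the idea of dualizing to annihilators and invoking the nilpotency criterion of Lemma~\ref{gnfnilp} on both sides is the natural line of attack and is consonant with the paper's overall philosophy. However, two of your intermediate steps are not as solid as they read. First, applying Lemma~\ref{gnfnilp} to $\dot A$ requires more than the formal involutivity of Lemma~\ref{killspencer}: Lemma~\ref{gnfnilp} uses Theorem~\ref{thm-gnf1} and the eigenvector correspondence of Corollary~\ref{cor:backeigen}, and the paper is careful (see the Remark after Lemma~\ref{killspencer}) that the sub-tableau $A|_X$ is \emph{maximally characteristic}, precisely the regime where the available restriction results (Corollary~\ref{cor:restrict}) do not apply. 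Second, your use of Corollary~\ref{proelemC} to pass to an involutive prolongation of $E$ imports the analyticity hypothesis, and even then the claim that ``prolongation does not change the characteristic variety'' for $E$ is exactly the kind of statement that needs justification here, since the paper only records the chain $\Xi_E^{(1)}\subset\Xi_E\subset\dot\Xi^{(1)}\subset\dot\Xi$ in Lemma~\ref{elemchar} without asserting equalities.

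Your diagnosis of the principal obstacle --- the coframe-dependence of the $H$-coefficients in \eqref{elem1condition} and the absence of a normal form making the torsion block triangular with respect to the $\dot\Xi$-eigenspaces --- matches the paper's own assessment (the sentence ``the $\eta^i_\alpha$ terms\ldots depend on the local coframe chosen on $M^{(1)}$'' just before Lemma~\ref{killspencer}). So your write-up is a reasonable outline of where the difficulty lies, but it should not be mistaken for a proof, and the paper does not supply one either.
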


\begin{rmk}
On the question of involutivity for $\elem(\mathcal{I})$:
If one were to consider Conjecture~\ref{invconj} for a formal
tableaux $A$ (as opposed to a tableau coming from a torsion free involutive
exterior differential system) then studying $\dot{A}=A|_X$ itself is very difficult.  Unfortunately, the only known result
on involutivity of sub-tableaux is the theorem of \cite{Guillemin1968}, which is generalizes \cite{Guillemin1968} and is restated here as Corollary~\ref{cor:restrict}.  This theorem applies to non-characteristic sub-tableaux like $A|_U$, but our
sub-tableaux $A|_X$ is defined to be \emph{maximally characteristic}!
\end{rmk}

\section{Parabolic Examples}\label{parabolic}
The prototypical example of a non-elementary system is the 1-dimensional heat equation on $y(t,x)$,
\begin{equation}
\partial_t  y =  \partial_x^2 y.
\label{eq:heat1}
\end{equation}  
On the manifold $M
\cong \mathbb{R}^{7}$ with local coordinates $(t, x, y, p_t, p_x, P_{tt},
P_{tx})$, consider the differential ideal generated by the contact
1-forms 
\begin{equation}
\begin{cases}
\Upsilon^0 = \mathrm{d}y - p_t \mathrm{d}t - p_x \mathrm{d}x,& \\
\Upsilon^1 = \mathrm{d}p_t - P_{tt} \mathrm{d}t - P_{tx} \mathrm{d}x,& \\
\Upsilon^2 = \mathrm{d}p_x - P_{tx} \mathrm{d}t - p_{t} \mathrm{d}x,&
\end{cases}
\end{equation}
and their derivative 2-forms
\begin{equation}
\mathrm{d}
\begin{bmatrix}
\Upsilon^1 \\ \Upsilon^2 \\ \Upsilon^0
\end{bmatrix}
\equiv
-
\begin{bmatrix}
\mathrm{d}P_{tt} & \mathrm{d}P_{tx} \\
\mathrm{d}P_{tx} & 0 \\
0 & 0 
\end{bmatrix}
\wedge
\begin{bmatrix}
\mathrm{d}t \\ \mathrm{d}x
\end{bmatrix}
-
\begin{bmatrix}
0 \\ P_{tt} \\ 0 \end{bmatrix}
\mathrm{d}t \wedge \mathrm{d}x,\ \mod \Upsilon^1,\Upsilon^2,\Upsilon^0.
\end{equation}
Absorb torsion by setting 
$\kappa^1 = - \mathrm{d}P_{tt}$ and $\kappa^2 = - \mathrm{d}P_{tx} + P_{tt}\mathrm{d}x$.  
Then 
\begin{equation}
\mathrm{d}
\begin{bmatrix}
\Upsilon^1 \\ \Upsilon^2 \\ \Upsilon^0
\end{bmatrix}
\equiv
-
\begin{bmatrix}
\kappa_1 & \kappa_2 \\
\kappa_2 & 0 \\
0 & 0 
\end{bmatrix}
\wedge
\begin{bmatrix}
\mathrm{d}t \\ \mathrm{d}x
\end{bmatrix}.
\label{eq:heat1tab}
\end{equation}

At this point, one is tempted to declare ``Look!  The rank-one cone is given by
$(\kappa^2)^2=0$, so the characteristic variety is just $\mathrm{d}t$, with
multiplicity two.''  However, this is imprecise, since the characteristic variety is interpreted properly as a sub-variety of the canonical bundle $V^*$ over $M^{(1)}$.  The prolongation $M^{(1)}$ must be constructed to apply the Main Theorems.

The tableau expressed by Equation \eqref{eq:heat1tab} has $\ell=1$ and $s=s_1=2$, so the fiber of the prolongation $M^{(1)} \to M$ is parametrized by local coordinates $U_1$ and $U_2$.  This can be seen by considering the general Grassmannian contact relations 
\begin{equation}
\begin{cases}
\kappa_1|_e = g_{1,1}(e) \eta^1|_e + g_{1,2}(e) \eta^2|_e,\\
\kappa_2|_e = g_{2,1}(e)\eta^1|_e + g_{2,2}(e)\eta^2|_e.
\end{cases}
\end{equation}
For any $e \in M^{(1)}$, the conditions $\mathrm{d}\Upsilon^1|_e=\mathrm{d}\Upsilon^2|_e =0$ imply $g_{2,2}(e)=0$ and $g_{1,2}(e)=g_{2,1}(e)$.  Therefore, we take $U_1 = g_{1,1}$ and $U_2=g_{1,2}$ as fiber coordinates on the 9-dimensional submanifold $M^{(1)} \subset \Gr_2(TM)$.

Therefore, for any $e \in M^{(1)}$, we establish a basis of $T_eM^{(1)}$ by setting 
\begin{equation}
\begin{cases}
u^1 = \mathrm{d}t,\\
u^2 = \mathrm{d}x,\\
u^3 = \Upsilon^0 = \mathrm{d}y - p_t \mathrm{d}t - p_x \mathrm{d}x,\\
u^4 = \Upsilon^1 = \mathrm{d}p_t - P_{tt} \mathrm{d}t - P_{tx} \mathrm{d}x, \\
u^5 = \Upsilon^2 = \mathrm{d}p_x  - P_{tx} \mathrm{d}t, \\
u^6 = \kappa^1 - U_1(e) \mathrm{d}t - U_2(e) \mathrm{d}x, \\
u^7 = \kappa^1 - U_1(e) \mathrm{d}t - U_2(e) \mathrm{d}x, \\
u^8 = \mathrm{d}U_1, \\
u^9 = \mathrm{d}U_2, 
\end{cases}
\end{equation}
and omitting writing the pull-back of $M^{(1)} \to M$ on the right-hand side.  
Decomposing $T_eM^{(1)}\otimes \mathbb{C} = V_e^* + W_e + A_e$ according to this basis,  
we see that the system 
$\mathcal{I}^{(1)}$ is generated by the 1-forms $u^3,\ldots,u^7$ spanning $W_e$ and the 2-forms
\begin{equation}
\mathrm{d}
\begin{bmatrix}
u^6\\ u^7
\end{bmatrix}
\equiv 
\begin{bmatrix}
u^8 & u^9 \\
u^9 & 0 
\end{bmatrix}\wedge
\begin{bmatrix}
u^1 \\ u^2
\end{bmatrix},\ \mod u^2, \ldots, u^7.
\label{eq:heat1tab1}
\end{equation}
It is easy to check involutivity with either Cartan's test or Theorem~\ref{thm-gnf+}.

Now it is sensible to say that $\C_e \subset A_e$ is given by the condition $(u^9)^2=0$, so $\Xi_e \subset V^*_e$ is spanned by $u^1$.  
Clearly, condition \ref{t0span} of Main Theorem~\ref{mainthm0} fails.
The failure of condition \ref{t0gnf} in Main Theorem~\ref{mainthm0} is evident,
since $\left(B(u^1)(u_2)\right)^2=0$.  The failure of condition \ref{t0sat1}
also evident:  as in Figure~\ref{figsubtab}, the system $\elem(\mathcal{I})$ is differentially generated by the 1-forms
$u^1, u^3, \ldots, u^7$ but contains the non-trivial 2-form $\mathrm{d} u^6
\equiv  u^9 \wedge u^2$ modulo $u^1, u^3, \ldots, u^7$.
Finally, condition \ref{t0desc} fails, because the generators $u^6$ and $u^7$ and the non-trivial derivative $\mathrm{d}u^6$ vary with $e \in M^{(1)}$.  
Main Theorem~\ref{foliation} takes the form of Corollary~\ref{cor:heat1}.
\begin{cor}
Every ordinary 2-dimensional integral manifold of \eqref{eq:heat1tab1} is
foliated by 1-dimensional hypersurfaces satisfying $u^1=0$.
\label{cor:heat1}
\end{cor}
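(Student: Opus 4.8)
The plan is to read Corollary~\ref{cor:heat1} directly off of Main Theorem~\ref{foliation}, using the computations already performed on the system~\eqref{eq:heat1tab1}. First I would record the relevant invariants: the tableau~\eqref{eq:heat1tab1} is involutive with $\ell = 1$ and $s_1 = 2$, its rank-one cone is $\C_e = \{(u^9)^2 = 0\} \subset A_e$, and hence $\Xi_e \subset V^*_e = \mathbb{P}e^*$ is the single (doubled) point spanned by $u^1 = \mathrm{d}t$. Consequently $\pair{\Xi}_e = \mathbb{C}\{u^1\}$ is a line, so $L = 1$, and dually $X^1_e = \pair{\Xi}^\perp_e = \ker(u^1) \subset V_e$ has projective dimension $n - L - 1 = 0$.

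Next I would invoke Main Theorem~\ref{foliation} for this (prolonged, involutive) heat system: every maximal ordinary integral manifold $\iota : N \to M$ --- that is, the prolongation of a classical solution $y(t,x)$ of~\eqref{eq:heat1} --- is locally foliated by submanifolds $D$ with $T_y D = X^1_N$. Since $X^1_N$ annihilates $\pair{\Xi}_N = \mathbb{C}\{u^1|_N\}$, each leaf $D$ is cut out by the single equation $u^1|_{TD} = 0$, and because $\dim N = 2$ and $\dim D = n - L = 1$, these are precisely the $1$-dimensional hypersurfaces $\{u^1 = 0\}$ asserted in the corollary. In this example the eikonal machinery of Section~\ref{int} (Theorem~\ref{intchar} and Lemma~\ref{spaninv}) is trivial: on $N$ the form $\iota^*(u^1) = \iota^*(\mathrm{d}t) = \mathrm{d}(t\circ\iota)$ is already exact, so the characteristic function is just the ambient coordinate $t$, the leaves are its level sets, and integrability of the distribution $X^1_N$ is automatic.

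I do not expect a genuine obstacle here --- the corollary is a transcription of Main Theorem~\ref{foliation} in the case $L = 1$. The only point requiring care is the bookkeeping that matches the abstract objects $\Xi_e$, $\pair{\Xi}_e$, $X^1_e$ of the general theory with the concrete coframe $(u^1, \ldots, u^9)$ fixed above; in particular one must confirm that $\pair{\Xi}_e$ is exactly the line $\mathbb{C}\{u^1\}$ rather than something larger, which holds because the rank-one cone $(u^9)^2 = 0$ is supported on the single point $[u^1] \in \mathbb{P}V^*_e$. For the full statement of Main Theorem~\ref{foliation} one would further observe that this system carries no Cauchy retractions, so $S_e = \emptyset$ and the inner leaf $\Lambda$ collapses to a point, leaving only the codimension-one foliation by the submanifolds $D$.
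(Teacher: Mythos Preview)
Your proposal is correct and matches the paper's approach exactly: the paper presents Corollary~\ref{cor:heat1} simply as the specialization of Main Theorem~\ref{foliation} to the heat-equation example, with no further argument beyond the already-computed facts that $\Xi_e$ is the single point $[u^1]$ (so $L=1$) and hence $X^1_e=\ker u^1$. Your additional remarks---that $u^1=\mathrm{d}t$ makes the eikonal step trivial and that $S_e=\emptyset$---are accurate elaborations the paper leaves implicit.
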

Regarding Conjecture~\ref{proelemC},  note that $\elem(\mathcal{I})$ is
involutive and elementary with $s_1=1$ and that $\elem^2(\mathcal{I})$ is the
irrelevant Frobenius ideal.  This reflects the existence of coordinates
$\mathrm{d}t$ and $\mathrm{d}x$ on solutions.  Establishing the existence of
these coordinates may seem pointless because we assumed their existence to write
the system \eqref{eq:heat1tab} originally; however, note that Corollary~\ref{cor:heat1}
yields such coordinates for \emph{any} differential system presented in a
coframe as \eqref{eq:heat1tab1}, even if $u^1$ and $u^2$ are not closed
on the space where the ideal is defined.

Extending the analogy between non-elementary systems and parabolic systems to higher dimensions is subtle.  Consider the 2-dimensional heat equation on $y(x^1,x^2,x^3)$,
\begin{equation}
\partial_1  y =  \left( \partial_{2}^2+ \partial_{3}^2\right) y.
\label{eq:heat2}
\end{equation}  
The ideal on jet space is generated by the 1-forms
\begin{equation}
\begin{cases}
\Upsilon^0=\mathrm{d}y-p_1\mathrm{d}x^1-p_2\mathrm{d}x^2-p_3\mathrm{d}x^3\\
\Upsilon^1=\mathrm{d}p_1-P_{11}\mathrm{d}x^1-P_{12}\mathrm{d}x^2-P_{13}\mathrm{d}x^3\\
\Upsilon^2=\mathrm{d}p_2-P_{12}\mathrm{d}x^1-P_{22}\mathrm{d}x^2-P_{23}\mathrm{d}x^3\\
\Upsilon^3=\mathrm{d}p_3-P_{13}\mathrm{d}x^1-P_{23}\mathrm{d}x^2-\left(p_1-P_{22}\right)\mathrm{d}x^3
\end{cases}
\end{equation}
and their derivative 2-forms.
Proceeding to absorb torsion and change bases, we arrive at a tableau of the form
\begin{equation}
\begin{bmatrix}
\pi^1_1 & \pi^1_2 & -\pi^2_2 \\
\pi^2_1 & \pi^2_2 & \pi^1_2 \\
\pi^3_1 & \pi^2_1 & \pi^1_1
\end{bmatrix},
\end{equation}
which $\ell=2$, $s_1=3$, and $s_2=2$.  It has symbol maps
\begin{equation}
B(u^1)(u_2) = \begin{bmatrix} 0 & 0 & 0  \\ 0 & 0 & 0\\ 0 & 1 & 0 \end{bmatrix},\ 
B(u^1)(u_3) = \begin{bmatrix} 0 & 0 & 0  \\ 0 & 0 & 0\\ 1 & 0 & 0 \end{bmatrix},
\end{equation}
and 
\begin{equation}
B(u^2)(u_3) = \begin{bmatrix} 0 & -1 & 0  \\ 1 & 0 & 0\\ 0 & 0 & 0 \end{bmatrix}.
\end{equation}
Note that this system is elementary over $\mathbb{C}$; ignoring multiplicity,
its characteristic variety is comprised of points of the form $[v_1: v_2: \pm i v_2]$,
which is two independent $\mathbb{CP}^1$'s.

However, it is also clear that (in some imprecise sense) this is non-elementary
over $\mathbb{R}$ because $B(u^1)(u_3)$ is nilpotent and $B(u^2)(u_3)$ has no non-zero \emph{real} eigenvalues,
In other words,  $B(\xi)(v) = \xi_1 B(u^1)(v) + \xi_2 B(u^2)(v)$ has 
non-zero \emph{real} eigenvalues on $\Wu(\xi)^-$ if and only if $u^3(v)=0$.
So there is a 1-dimensional subspace of $V$ on which the conditions of Main
Theorem~\ref{mainthm0} fails.   On the other hand, allowing $v_2$ to be imaginary,
then the same reasoning applies with $u^2(v)=0$. 
Pursuit of the real case from this perspective is a compelling subject for
future work, as it may lead to general results for parabolic PDEs.

\section{Artificial Examples}\label{sec:art}
To construct new non-elementary involutive tableaux, we can apply the approach of \cite{Smith2014a}, as expressed by 
Theorem~\ref{thm-gnf+} and the preferred decomposition $V^* = U^* \oplus Y^* \oplus X^*$ from Section~\ref{gnf}.
To illustrate this, let us construct an involutive tableau with $(\ell,L,\nu,n) = (3,4,5,5)$ and 
$(s_1,s_2,s_3) = (3,2,2)$.  Because $s_1=3$, we may as well take $r=3$ to avoid writing zero rows in the tableau.
Because $\nu=n$, the tableau takes the following form, where all columns are
linearly independent,
\begin{equation}
\pi = 
\begin{bmatrix}
\pi_1 & \pi_4  &  \pi_6  &  ?  & ? \\
\pi_2 & \pi_5  &  \pi_7  &  ?  & ? \\
\pi_3 & ?  &  ?  &  ?  & ?  
\end{bmatrix}.
\end{equation}
We can build an example tableau by choosing how the remaining entries depend on $\pi_1, \ldots, \pi_7$; that is, we may choose the $r \times r$ matrices $B^\lambda_k = B(u^\lambda)(u_k)$ in $\End(W)$.
To facilitate computation, write 
\begin{equation}
\left(B^\lambda_k\right)
=\begin{pmatrix}
I_3 & B^1_2 & B^1_3 & B^1_4 & B^1_5 \\
      & I_2 & B^2_3 & B^2_4 & B^2_5 \\
      &       & I_3 & B^3_4 & B^3_5 
\end{pmatrix}.
\label{eq:Barray}
\end{equation}

Because $s_2=s_3=2$, the matrices $B^2_k$ and $B^3_k$ are zero outside the upper-left $2 \times 2$ block for all $k$.   Also, $B^1_2$, $B^1_3$, and $B^2_3$ have zeros in the upper-left $2 \times 2$ block.

The condition $L=4$ implies that $B^\lambda_5$ is nilpotent on
$\Wu^1(u^\lambda)$ for all $\lambda=1,2,3$.  The non-trivial $2 \times 2$ nilpotent matrices are 
\begin{equation}
\begin{pmatrix} 0 & 1 \\ 0 & 0 \end{pmatrix},\
\begin{pmatrix} 0 & 0 \\ 1 & 0 \end{pmatrix},\ \text{and}\ 
\begin{pmatrix} 1 & q^{-1} \\ -q & -1 \end{pmatrix}.
\end{equation}
Let's assume that $B^3_5$ is the first of these forms and that $B^2_5$ is the third of these forms.
The nilpotency of $B^1_5$ on $\Wu^1(u^1)$ is examined below.

Theorem~\ref{thm-gnf+} implies that $B^2_4$ shares a Jordan form with $B^2_5$, so 
$B^2_4 = r_1 I_2 + p_1 B^2_5$.  Similarly, $B^3_4$ takes the form $r_2 I_2 +
p_2 B^3_5$.  Moreover, the condition $B^2_4B^3_5 - B^2_5B^3_4=0$ implies that
$r_1=r_2=0$ and $p_1=p_2=p$.  Therefore, \eqref{eq:Barray} has 
\begin{align}
B^2_4 &= \begin{bmatrix} 0 & p &  \\ 0 & 0 &\\\phantom{M}  &\phantom{M}  &\phantom{M}  \end{bmatrix},& 
B^2_5 &= \begin{bmatrix} 0 & 1 &  \\ 0 & 0 &\\\phantom{M}  &\phantom{M}  &\phantom{M}  \end{bmatrix}, \\
B^3_4 &= \begin{bmatrix} p & \frac{p}{q} &\\ -pq & -p &\\\phantom{M}  &\phantom{M}  & \phantom{M} \end{bmatrix},& 
B^3_5 &= \begin{bmatrix} 1 & \frac{1}{q} &\\ -q & -1 &\\\phantom{M}  &\phantom{M}  & \phantom{M} \end{bmatrix}
\end{align}

Next, we know that $\left(B^1_2 B^2_3 - B^1_3 B^2_2\right)^a_b=0$ for $a>s_2 = 2$.  Because $B^2_3=0$ and $B^2_2=I_2$, the matrix $B^1_3$ must be of the form
\begin{equation}
B^1_3 = \begin{bmatrix}
0 & 0 & 0 \\
0 & 0 & 0 \\
0 & 0 & z_3 
\end{bmatrix}.
\end{equation}
Similarly, $\left(B^1_2 B^3_3 - B^1_3 B^3_2\right)^a_b=0$ for $a>s_2 = 2$.  Because $B^3_2=0$ and $B^3_3=I_2$, the matrix $B^1_2$ must be of the form
\begin{equation}
B^1_2 = \begin{bmatrix}
0 & 0 & 0 \\
0 & 0 & 0 \\
0 & 0 & z_2 
\end{bmatrix}.
\end{equation}

Next, $\left(B^1_2 B^2_4 - B^1_4 B^2_2\right)^a_b=0$ and
$\left(B^1_2 B^2_5 - B^1_5 B^2_2\right)^a_b=0$ 
for $a>s_2 = 2$.  Because $0=B^1_2B^2_4=B^1_2B^2_5$, it must be that 
\begin{align}
B^1_4 &= \begin{bmatrix}
? & ? & ? \\
? & ? & ? \\
0 & 0 & z_4
\end{bmatrix},&
B^1_5 &= \begin{bmatrix}
? & ? & ? \\
? & ? & ? \\
0 & 0 & z_5
\end{bmatrix}.
\end{align}
Next, note that $[1:z_2:z_3:z_4:z_5] \in \Xi$, so $L=4$ implies $z_5=0$.

The conditions $\left(B^1_4 B^2_5 - B^1_5B^2_4\right)^a_b=0$ 
and $\left(B^1_4 B^3_5 - B^1_5B^3_4\right)^a_b=0$ for $a>s_4=0$ imply that 
\begin{align}
B^1_4 &= \begin{bmatrix}
px_1 & px_2 & y_3 \\
px_3 & px_4 & y_4 \\
0 & 0 & z_4
\end{bmatrix},&
B^1_5 &= \begin{bmatrix}
x_1 & x_2 & y_1 \\
x_3 & x_4 & y_2 \\
0 & 0 & 0
\end{bmatrix}.
\end{align}

Assuming that the parameters $z_2$ and $z_3$ are non-zero, the space
$\Wu^1(u^1)$ is comprised of vectors with third component zero.  Because one is
a scale of the other on $\Wu^1(u^1)$, $B^1_5$ and $B^1_4$ already commute on
that subspace.  To make $B^1_5$ nilpotent on $\Wu^1(u^1)$, let's assume 
$x_1=g$, $x_2=\frac{g}{h}$, $x_4=-g$, and $x_3=-hg$.

Finally, the condition $\left(B^1_4 B^1_5 - B^1_5 B^1_4\right)^a_b=0$ for
$a>s_3=0$ implies compatible Jordan forms on all of $W=\Wu^-(u^1)$.  
An example solution is to force $B^1_4$ and $B^1_5$ to be block lower-triangular by setting
$y_1,y_2,y_3,y_4$ zero.

Hence, an example non-elementary tableau with the desired dimensions is
{\small
\begin{equation*}
\begin{bmatrix}
\pi_{1} & \pi_{4} & \pi_{6} & p \left(g\pi_{1} +
\frac{g}{h}\pi_{2} + \pi_{5} + \pi_{6} + \frac{1}{q}\pi_{7}\right) & g\pi_{1}+ \frac{g}{h}\pi_{2}   + \pi_{5} + \pi_{6}
+ \frac{1}{q}\pi_{7} \\
\pi_{2} & \pi_{5} & \pi_{7} & p\left(-gh\pi_{1} -g\pi_{2}- p\pi_{6} 
 - \pi_{7}\right) & -gh\pi_{1} -g\pi_{2} - q\pi_{6}   - \pi_{7} \\
\pi_{3} & \pi_{3} z_{2} & \pi_{3} z_{3} & \pi_{3} z_{4} & 0
\end{bmatrix}
\end{equation*}}
The parameters $p,q,h,g,z_2,z_3,z_4$ could be taken as functions on $M^{(1)}$.
(Note that $z_4 =0$ if and only if $L$ falls to $\ell=3$.)
In principle, one could construct all involutive tableaux this way.



\newpage

\section{Discussion}
The main theorems are direct observations using established techniques in exterior
differential systems.  They fill a significant gap in the literature, but in
retrospect they may not be surprising to experts who have manipulated
tableaux in many examples.  
Notably,
Cartan encountered many of these phenomena in \cite{Cartan1911}---particularly
the example beginning with paragraph 22---but apparently he did not pursue them elsewhere.
To my knowledge, that is the only appearance of any similar statement in the
literature.

As to the coinage ``elementary,'' several other names also seem appropriate.
One might call these systems names like ``semi-simple,'' ``non-parabolic,'' or
``primitive.'' But, I believe ``semi-simple'' is premature without a generalized
Levi decomposition theorem, ``non-parabolic'' is misleading without a generalized
regularity theorem, and ``primitive'' would convolute the intricate
relationship between involutive EDS and Lie pseudogroups.

However, it does seem reasonable to expect that there would be a form 
of parabolic regularity to certain non-elementary systems where $n-L=1$.
Involutivity of the eikonal system of $\pair{\Xi}$ should guarantee the
existence of a time variable corresponding to the vector subspace $X$ of
nilpotents, like in Corollary~\ref{determined}.  Of course, as seen in Section~\ref{parabolic}, these results would need to be loosened from
$\mathbb{C}$ to $\mathbb{R}$ to be useful for analysis.

I do \emph{not} have a strong sense of whether the Conjectures are
actually true. They seem to hold on examples I have constructed by hand using
Theorem~\ref{thm-gnf+} as in Section~\ref{sec:art}, but
it is difficult to build toy systems that have sufficiently complicated
characteristic varieties after elementary reduction.  I encourage you to sift
through your favorite non-elementary EDS/PDEs for examples where
Conjecture~\ref{rankconj} holds or fails.  Where it holds, it suggests that all
solutions of the EDS can be found through a canonical sequence of
reductions.  Such a structure would provide a solvability criterion, allowing a
decomposition theorem for EDS into a sequence elementary or Frobenius systems,
as in Main Theorem~\ref{mega-foliation}.  If it fails, then the microlocal
analysis of characteristic varieties contains further mysteries that must be
similarly fascinating.

\newpage
\linespread{1.0} 
\normalfont \selectfont
\providecommand{\etalchar}[1]{$^{#1}$}
\providecommand{\bysame}{\leavevmode\hbox to3em{\hrulefill}\thinspace}
\providecommand{\noopsort}[1]{}
\providecommand{\mr}[1]{\href{http://www.ams.org/mathscinet-getitem?mr=#1}{MR~#1}}
\providecommand{\zbl}[1]{\href{http://www.zentralblatt-math.org/zmath/en/search/?q=an:#1}{Zbl~#1}}
\providecommand{\jfm}[1]{\href{http://www.emis.de/cgi-bin/JFM-item?#1}{JFM~#1}}
\providecommand{\arxiv}[1]{\href{http://www.arxiv.org/abs/#1}{arXiv~#1}}
\providecommand{\doi}[1]{\url{http://dx.doi.org/#1}}
\providecommand{\MR}{\relax\ifhmode\unskip\space\fi MR }
\providecommand{\MRhref}[2]{%
  \href{http://www.ams.org/mathscinet-getitem?mr=#1}{#2}
}
\providecommand{\href}[2]{#2}

\end{document}